 \newtheorem{theorem}{Theorem}[section]
 \newtheorem{corollary}[theorem]{Corollary}
 \newtheorem{remark}[theorem]{Remark}
 \newtheorem{definition}[theorem]{Definition}
\newtheorem{proposition}[theorem]{Proposition}
\newtheorem{example}[theorem]{Example}
\newtheorem{conjecture}[theorem]{Conjecture}
\newtheorem{construction}[theorem]{Construction}
\newtheorem{observation}[theorem]{Observation}
 \newenvironment{proof}{{\it Proof:\/}}{$\Box$\vskip 0.08in}
\newcommand\Z{{\mathbb Z}} 
\newcommand\R{{\mathbb R}}
\newcommand\Q{{\mathbb Q}}
\newcommand\W{{\overline{W}}}
\begin{document}
\pagestyle{myheadings}
 
\thispagestyle{empty}
\markboth {{\sc j.h.przytycki}}
{{\sc Nonorientable, incompressible surfaces }}
\begin{center}
\begin{LARGE}
\baselineskip=10pt
{\bf Nonorientable, incompressible surfaces in punctured-torus
bundles over $S^1$}
\end{LARGE}
\\
\ \\
  J\'OZEF H.~PRZYTYCKI
\end{center}
\centerline{Dedicated to Maite Lozano on the occasion of her 70th birthday}
\begin{quotation}
ABSTRACT.
\baselineskip=10pt
We classify incompressible, $\partial$-incompressible, nonorientable 
surfaces in punctured-torus bundles over $S^1$. We use the ideas of Floyd, Hatcher, and Thurston.
The main tool is to put our surface in the ``Morse position" with respect to the projection
of the bundle into the basis $S^1$.
\end{quotation}
\ \\
\section{Introduction}\label{1}
This paper\footnote{The paper was the second part of the author's doctoral
dissertation prepared at Columbia University under the supervision
of Professor Joan Birman \cite{P-0}. Added for arXiv: After 38 years the paper has been published 
in Revista de la Real Academia de Ciencias Exactas, Físicas y Naturales. Serie A. Matemáticas (RCSM), pp: 1-26; First Online: 26 October 2018.
}
 is devoted to the classification, up to isotopy, of incompressible,
$\partial$-incompressible, nonorientable surfaces in punctured-torus
bundles over $S^1$. We also give a partial classification of nonorientable,
incompressible (not necessary $\partial$-incompressible) surfaces.  In the proof we use
the ideas of A.Hatcher and W.Thurston \cite{H-T} and of
W.Floyd and A.Hatcher \cite{F-H}. The main tool is to put our surface in the ``Morse position" with respect to the projection 
of the bundle into the basis $S^1$. Then we make careful and very laborious analysis of critical points.

We work in the smooth category, however all the results can be proven 
in the PL category as well.
\begin{definition}\label{Definition 1.1.}
\begin{enumerate}
\item
[(a)] Let $M$ be a 3-manifold and $F$ a surface which is either properly
embedded in $M$ or contained in $\partial M$. 
We say that $F$ is 
{\it compressible} in $M$ if one of the following conditions is satisfied:
\begin{enumerate}
\item
[(i)] $F$ is a 2-sphere which bounds a 3-cell in $M$, or
\item
[(ii)] $F$ is a 2-cell and either $F\subset \partial M$ or there is a 3-cell
$X\subset M$ such that $F\subset \partial X$ and  $\partial X\subset (F\cup \partial M)$, or
\item 
[(iii)] there is a 2-cell $D\subset M$ such that $D\cap F =\partial D$ and
$\partial D$ is not contractible in $F$.
\end{enumerate}
We say that $F$ is {\it incompressible} in $M$ if it is not compressible.
\item 
[ (b)] Let $F$ be a submanifold of a manifold $M$. We say that $F$ is
${\pi}_1$-{\em injective} in $M$ if the inclusion induced homomorphism from
${\pi}_1(F)$ to ${\pi}_1(M)$ is an injection.
\item  
[(c)] Let $F$ be a surface properly embedded in a compact 3-manifold $M$,
and ${\partial}_0M$ a component of $\partial M$. We say that $F$ is
$\partial$-{\em incompressible} along ${\partial}_0M$ if there is no 2-disk
$D\subset M$ such that: \ $\partial D\subset ({\partial}_0M\cup F)$,
$D\cap F=\alpha$ is an arc in $\partial D$, $D\cap {\partial}_0M =\beta$
is an arc in $\partial D$. Furthermore $\alpha \cap \beta = \partial\alpha
=\partial\beta$ and $\alpha\cup \beta = \partial D$, and $\alpha$ is not
parallel to $\partial F$ in $F$.  \ We say that $F$ is $\partial$-{\it incompressible} 
in $M$ if $F$ is $\partial$-incompressible along each component of $\partial M$.
\end{enumerate}
\end{definition}

\section{Classification theorems.}\label{2}

In this chapter we prove our main theorem on the structure of incompressible surfaces in a 
punctured-torus bundle over a circle with a hyperbolic monodromy map. We start from the basic construction of Farey diagram 
which can be used to describe the action of the projective special linear group $PSL(2,\Z)$ on the set of fractions $\Q\cup \infty$.
\begin{definition}\label{Definition 2.1.}
\item
[ (a)](\cite{H-T,F-H}) The following graph, $W'$, placed in a disc, 
is called the diagram of $PSL(2,\Z)$ or the Farey diagram.  The set of vertices of $W'$ is 
the set $W=\Q \cup \{\infty\}$, where
$\Q$ is the set of rational numbers. Two vertices $\frac{p_1}{q_1}, 
\frac{p_2}{q_2}\in W$ are joined by an edge if and only if 
$ det \left( \begin{array}{cc}
p_1 & p_2 \\
q_1 & q_2 
\end{array}
\right)
=\pm 1
$
(see Figure 2.1 (a)).
\ \\
\centerline{\psfig{figure=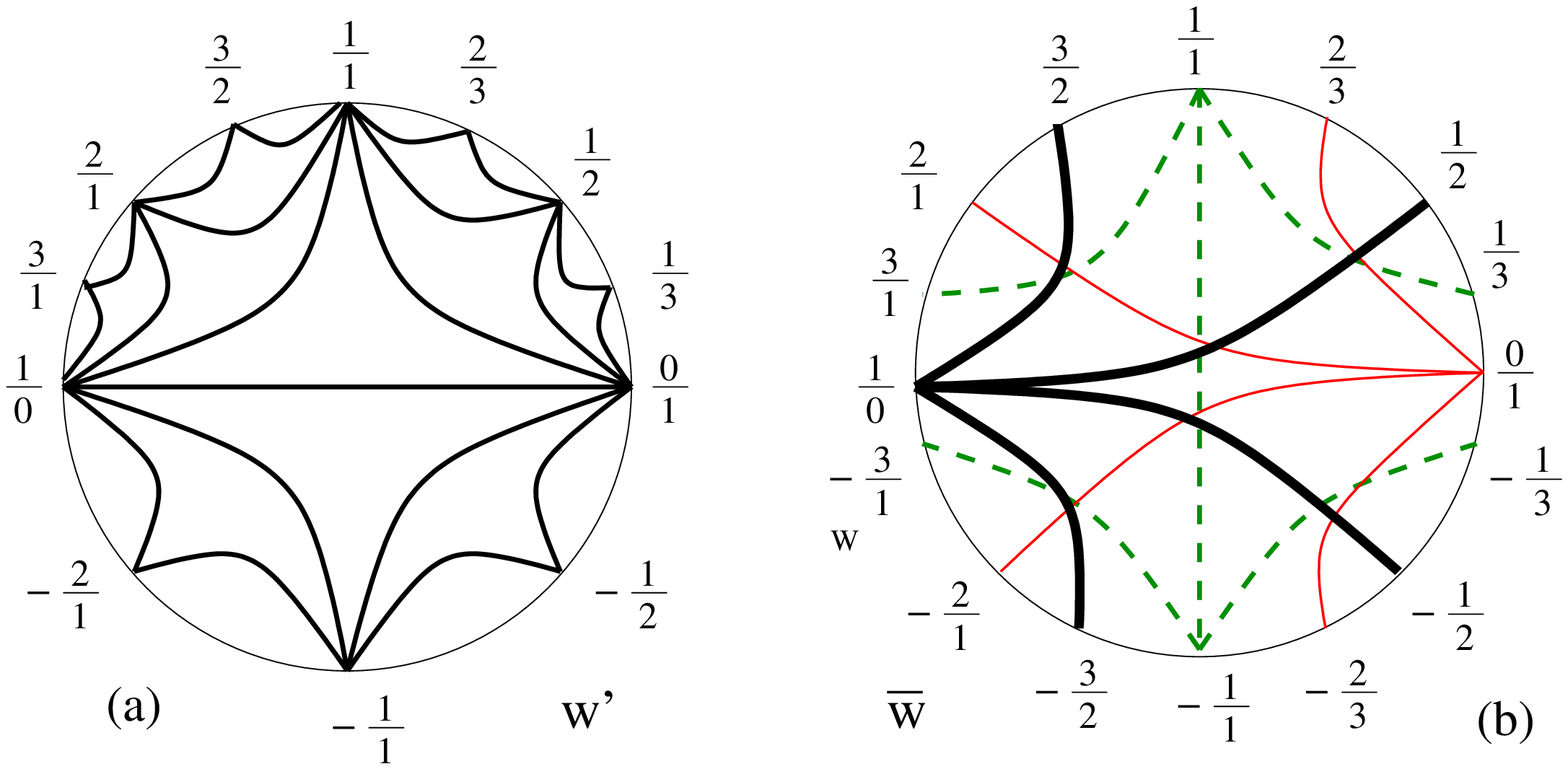,height=7.2cm}}\ \\
\begin{center}
Figure 2.1
\end{center}
 
\item[(b)] Let $W_0=\{\frac{p}{q}\ :\ p \ and \ q \ are\ odd\ \}$, \\
$W_1=\{\frac{p}{q}\ :\ q \ is\ even\}$, \\
$W_2=\{\frac{p}{q}\ :\ p \ is\ even\}$.\\
	We define $\overline {W}$ (respectively $\W_0$, $\W_1$, and $\W_2$)
to be the graph with vertices $W$ (resp. $W_0$, $W_1$ and $W_2$) and such
that two vertices $\frac{p_1}{q_1}$, $\frac{p_2}{q_2}$ in $W$
(resp. $W_0$, $W_1$ and $W_2$) are joined by an edge if and only if
$det \left[ \begin{array}{cc} 
p_1 & p_2 \\ 
q_1 & q_2 
\end{array} 
\right] 
=\pm 2 
$ 
(see Figure 2.1 (b)). 
\end{definition}
We need the following, well-known fact, related, via Cayley graph, to the fact that
matrices $\left[ \begin{array}{cc} 
1 & 2 \\ 
0 & 1 
\end{array} 
\right] 
$ 
and $\left[ \begin{array}{cc}  
1 & 0 \\  
2 & 1  
\end{array}  
\right]  
$  
generate a free subgroup of $PSL(2,\Z)$ (for this classical result see e.g.  \cite{MKS}, Sec. 2.3).

\begin{proposition}\label{Proposition 2.2}
The graph $\W$ is a forest of three connected components, that is: 
$\W=\W_{0} \sqcup \W_{1} \sqcup   \W_{2}$ and $\W_{i}$ ($i=0,1$, or $2$) is a tree.
\end{proposition}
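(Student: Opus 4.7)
The plan is to split the claim into two parts: (I) that $\W$ decomposes as the disjoint union $\W_0 \sqcup \W_1 \sqcup \W_2$ as a graph, and (II) that each $\W_i$ is a tree. Part (I) is a parity check, and part (II) uses the Farey tessellation of $\mathbb{H}^2$ (whose $1$-skeleton is the diagram $W'$ of Definition 2.1(a)) together with the classical fact that its dual graph is a tree.

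For (I), a reduced fraction $p/q$ cannot have both entries even, so $W = W_0 \sqcup W_1 \sqcup W_2$. Any edge of $\W$ requires $p_1 q_2 - p_2 q_1 = \pm 2$, hence even; a direct parity computation rules out each of the three mixed pairings $W_0$--$W_1$, $W_0$--$W_2$, $W_1$--$W_2$, each of which makes $p_1 q_2 - p_2 q_1$ odd. Hence edges of $\W$ only lie inside a single class.

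For (II), let $T$ denote the dual graph of the Farey tessellation (vertices = Farey triangles, edges = shared Farey edges). I would use two standard facts: $T$ is an infinite tree (essentially the Bass--Serre tree for $PSL(2,\Z) \cong \Z/2 * \Z/3$, which is in the spirit of the free-subgroup hint preceding the proposition), and each Farey triangle has exactly one vertex of each class $W_0, W_1, W_2$ (two same-class vertices would force the relevant $2\times 2$ determinant to be even, contradicting $\det = \pm 1$ on a Farey edge). Color each edge of $T$ by the unique class in $\{0,1,2\}$ not appearing among the endpoints of its Farey edge; then every triangle sees all three colors on its three dual sides.

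Now fix $v \in W_i$. The Farey triangles containing $v$ form a fan, and two consecutive fan triangles share a Farey edge through $v$, whose $T$-color is not $i$ (one endpoint being $v$); hence each fan is a connected subtree of $T$ lying inside the non-$i$-colored subgraph. Conversely, an $i$-colored $T$-edge separates two Farey triangles whose opposite $W_i$-vertices $v, v'$ satisfy $|p_v q_{v'} - p_{v'} q_v| = 2$ by a short determinant calculation from the $\det = \pm 1$ relations on the shared Farey edge, and every $\W_i$-edge arises this way. Thus $\W_i$ is the quotient of $T$ obtained by contracting each fan to a single point; since contracting disjoint connected subtrees of a tree yields a tree, each $\W_i$ is a tree. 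The main obstacle is this last bijection between $\W_i$-edges and $i$-colored $T$-edges, which hinges on the determinant computation identifying opposite vertices of adjacent Farey triangles as $\W$-neighbors; the parity arguments of step (I) and the tree structure of $T$ itself are standard.
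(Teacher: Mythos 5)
Your part (I) is exactly the paper's first step: the same parity observation about $\det=\pm2$ with reduced fractions shows that edges of $\W$ never join different classes. Your part (II), however, is a genuinely different route. The paper proves acyclicity by a growth estimate: along a minimal edge-path in $\W_1$ starting at $\frac{1}{0},\frac{1}{2}$ one shows by induction that $p_i\le p_{i+1}$ and $q_i<q_{i+1}$, so such a path cannot close up, and then arbitrary cycles are excluded ``by homogeneity'' of $\W$; connectivity of each $\W_i$ is only sketched (``may be proved by induction''). You instead obtain acyclicity and connectivity simultaneously from the Farey tessellation: the dual trivalent graph $T$ is a tree, each triangle carries one vertex of each class, your $3$-coloring of dual edges makes the fan of triangles around a $W_i$-vertex a connected subtree whose internal edges avoid color $i$, the fans are disjoint and cover the vertices of $T$, and contracting them exhibits $\W_i$ as a quotient of $T$ by disjoint connected subtrees, hence a tree. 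The trade-off: the paper's argument is elementary and self-contained, in the spirit of its hint about the free subgroup generated by $\left[\begin{array}{cc} 1 & 2 \\ 0 & 1 \end{array}\right]$ and $\left[\begin{array}{cc} 1 & 0 \\ 2 & 1 \end{array}\right]$, but is terse on both the homogeneity step and connectivity; yours is more structural and settles connectivity cleanly, at the price of importing the classical fact that the dual of the Farey tessellation is a tree. The one step you assert rather than prove --- that every $\W_i$-edge arises from an $i$-colored $T$-edge --- does close as you expect: if $v=\frac{p_1}{q_1}$ and $v'=\frac{p_2}{q_2}$ lie in the same class with $p_1q_2-p_2q_1=\pm2$, then $\frac{(p_1+p_2)/2}{(q_1+q_2)/2}$ and $\frac{(p_1-p_2)/2}{(q_1-q_2)/2}$ are integral, have determinant $\pm1$, and span the unique Farey edge across which $v$ and $v'$ sit as the two opposite vertices of the adjacent triangles; uniqueness of that Farey edge also gives the injectivity needed for the contracted graph to be $\W_i$ itself rather than a multigraph.
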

\begin{proof}  The first part follows from the observation: \\*
if $det\left[\begin{array}{cc} 
p_1 & p_2 \\*
q_1 & q_2 
\end{array}\right]
=\pm2$
and $(p_{i},q_{i})=1$ ($i=1,2$), then either $p_{1}$,$p_{2},q_{1},q_{2}$ 
	are odd numbers or $p_{1}$ and $p_{2}$ are even or $q_{1}$ and $q_{2}$ are even.  
	To prove the second part of Proposition \ref{Proposition 2.2} consider the 
edge-path $\frac{1}{0}, \frac{1}{2}, \frac{p_{3}}{q_{3}}, \frac{p_{4}}{q_{4}},...\frac{p_{n}}{q_{n}}$,... in $\W_1$ 
	which is {\it minimal} (i.e. 
$\frac{p_{i+1}}{q_{i+1}} \neq \frac{p_{i-1}}{q_{i-1}}).$  Then $p_{i} \leqslant p_{i+1}$ and $q_{i} < q_{i+1}.$  
This follows by induction on $i$.  Namely $\frac{1}{0}$ and $\frac{1}{2}$ satisfy these inequalities and  we have 
	$\frac{p_{i+1}}{q_{i+1}}=\frac{p_{i-1}+2kp_{i}}{q_{i-1}+2kq_{i}}= \frac{|p_{i-1}+2kp_{i}|}{|q_{i-1}+2kq_{i}|}$ for some integer 
	$k$ depending on $i$, $k\neq 0$.  
	For $k>0$, $p_{i+1}= p_{i-1}+2kp_{i} \geq p_i$, $q_{i+1}= q_{i-1}+2kq_{i} > q_i$  and for negative $k$ (say $-k=k'>0$) we have 
	$p_{i+1}= 2k'p_{i} -p_{i-1} \geq p_i$ and $q_{i+1}= 2k'q_{i} -q_{i-1} > q_i$.  The above inequalities imply that 
no minimal edge path starting at $\frac{1}{0},\frac{1}{2}$ contains a cycle. 
  Generally $\W$ does not contain any cycle by homogeneity 
of $\W.$  Connectivity of $\W_{i}$ (i=0,1,2) may be proved by induction.
\indent
\end{proof}
The following known facts\footnote{In some form one can find it in \cite{B-W,Ru}; it is also written in my PhD thesis \cite{P-0} 
and in \cite{J-P}.}
can be formulated using a notion of a minimal edge-path in $\W.$

\begin{theorem}\label{Theorem 2.3}
Let $F$ be an incompressible surface in $T^{2} \times I$, Then either
\begin{itemize}
\item[(a)] F is isotopic to a saturated annulus (i.e. annulus of type 
$(\gamma) \times I$ for some nontrivial simple closed curve $\gamma$ in $T^{2}$), or
\item[(b)] F is an annulus or torus parallel to the boundary, or
\item[\textbf{(c)}] F is isotopic to a nonorientable manifold uniquely determined by two different slopes 
$\frac{p_{0}}{q_{0}}$ and $\frac{p_{1}}{q_{1}}$ where the determinant of 
$\left[\begin{array}{cc} 
p_0 & p_1 \\ 
q_0 & q_1 
\end{array}\right]$
is even and $F\cap(T^{2} \times \{i\})$ is a curve of slope $\frac{p_{i}}{q_{i}}$ (i=0,1).  The genus 
		of such a surface\footnote{If $F$ is a connected sum of $k$ copies of a projective space 
		and $b$ discs  ($F=$ {\large $\#_k$}$ RP^2 $  {\small\# }  {\large $ \#_b$}$D^2$) 
		then we say that $F$ has genus $k$ and $b$ boundary components. In particular, the Klein bottle has genus $2$.} 
		is equal to the period of the minimal edge-path from 
$\frac{p_{0}}{q_{0}}$ to $\frac{p_{1}}{q_{1}}$ in $\W.$
\end{itemize}
\end{theorem}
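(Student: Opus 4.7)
The plan is to put the surface $F$ in Morse position with respect to the height function $h\colon T^2 \times I \to I$, so that $h|_F$ has only nondegenerate critical points at distinct levels. At non-critical heights, the slice $F_t := F \cap (T^2 \times \{t\})$ is a disjoint union of simple closed curves on the torus. First I would use incompressibility of $F$ to eliminate null-homotopic components of each $F_t$ by the standard innermost-disk isotopy, and eliminate interior local maxima and minima of $h|_F$ since the resulting caps bound disks on torus slices that extend to balls disjoint from $F$. After these reductions, every $F_t$ is a disjoint union of parallel essential curves of a single slope (disjoint essential curves on $T^2$ are parallel), and $h|_F$ has only saddles in the interior of $T^2 \times I$.

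The crux of the argument is the local analysis at each saddle, which is a band-sum of two arcs of parallel essential curves on a torus level. A direct case analysis of how the band can wrap the torus shows that if the slope changes from $p_0/q_0$ to $p_1/q_1$, then either the band lies in an annular neighborhood of the curves and the slope is preserved (contributing a product annulus piece), or the saddle is orientable and $det\left[\begin{array}{cc} p_0 & p_1 \\ q_0 & q_1 \end{array}\right]=\pm 1$ (a Farey edge, adding a handle to $F$), or the saddle is nonorientable and the same determinant equals $\pm 2$ (an edge of $\W$, adding a crosscap). This local slope computation is the main technical step.

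Consequently, the sequence of slopes on successive horizontal slices of $F$ traces an edge-path from $p_0/q_0$ on $T^2\times\{0\}$ to $p_1/q_1$ on $T^2\times\{1\}$ in the combined Farey-$\W$ graph. Incompressibility and $\partial$-incompressibility force the path to be \emph{minimal}: a backtracking pair of adjacent saddles can be cancelled by an isotopy, yielding either a compressing disk or a $\partial$-parallel annulus. In the nonorientable case (c) one shows further that every saddle may be taken to be nonorientable, so the edge-path lies entirely in $\W$; Proposition \ref{Proposition 2.2} then forces the endpoints to lie in a single component $\W_i$, giving the determinant-even condition, and since $\W_i$ is a tree the minimal path is unique, determining $F$ up to isotopy. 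The genus equals the length of this path because each crosscap-saddle contributes exactly one crosscap. The trivial-path situations give (a) the saturated annulus when the slope is constant and equals both boundary slopes, and (b) a $\partial$-parallel annulus or torus when $F$ lies in a collar of $\partial(T^2\times I)$.

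The hardest step will be the slope-change computation at a saddle, together with reducing to all-nonorientable saddles in case (c); the combinatorial minimality of the path then follows by standard adjacent-saddle cancellation using the tree structure of $\W_i$ from Proposition \ref{Proposition 2.2}, and uniqueness of $F$ up to isotopy reduces to the observation that two surfaces realizing the same minimal edge-path in $\W$ differ by slope-preserving isotopies within $T^2 \times I$.
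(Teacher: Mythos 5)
The paper itself does not prove Theorem \ref{Theorem 2.3}; it is quoted as a known result (Bredon--Wood, Rubinstein, the author's thesis), so your outline can only be judged against the standard Morse-position argument, which is indeed the right general strategy. But your central step --- the slope-change analysis at a saddle --- is misstated for this setting, where the levels are \emph{closed} tori and the slices are circles, not arcs. A saddle joining two distinct essential circles of the same slope always produces a null-homologous, hence trivial, circle (their classes cancel), and a self-band of a circle attached on one side only splits off a trivial circle; neither changes the slope. The only slope-changing saddle is a self-band of a single essential circle attached to its two opposite sides, so the band core, closed up along the circle, is a loop $w$ meeting the circle algebraically once, and the class changes from $a$ to $a\pm 2w$ with $|\det(a,w)|=1$. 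Hence the determinant between consecutive slopes is forced to be exactly $\pm 2$ (an edge of $\W$), and such a saddle needs exactly one essential circle on nearby levels. Your middle alternative --- an ``orientable'' saddle with determinant $\pm 1$ adding a handle --- cannot occur in $T^{2}\times I$; it belongs to the punctured-fiber (arc) setting of Floyd--Hatcher, and keeping it in the trichotomy would let through orientable surfaces with distinct boundary slopes, which the theorem correctly excludes. Since you yourself flag this computation as the crux and do not carry it out, this is a genuine gap rather than a presentational one.

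Two further points. You invoke $\partial$-incompressibility to force minimality of the edge-path, but it is not a hypothesis of Theorem \ref{Theorem 2.3}, and in fact the surfaces of case (c) are typically $\partial$-compressible (compare Corollary \ref{Corollary 2.5} and Observation \ref{Observation 2.19}, where the M\"obius band with a hole $S_{0}\subset T^{2}\times I$ is used precisely as a $\partial$-compressible building block); backtracking must be excluded using incompressibility alone (a back-and-forth pair of slope-changing saddles yields a compressing disk, as in the first case of Figure 2.28), and you must also treat levels on which no essential circle survives, which is where the $\partial$-parallel torus and annulus of case (b) arise. Finally, the uniqueness statement --- that the minimal edge-path, and hence $F$, is determined up to isotopy --- is not the ``observation'' your last sentence makes it: one has to show the slope sequence is invariant under generic isotopy (interchanges of saddle levels, creation/cancellation of critical points), which is exactly the kind of argument the paper carries out for Proposition \ref{Proposition 2.16}; as written you assume the conclusion at this point.
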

\begin{remark}\label{Remark 2.4}
In the case of a surface which is nonorientable and
an isotopy whose restriction to the boundary is the identity 
we have to know additionally the intersection number modulo 2 of $F$ with the
arc {$\{\ast\}$} $\times I$, where {$\{\ast\}$} is a fixed point on $T^{2}$, to determine $F$.
\end{remark}
\begin{corollary}\label{Corollary 2.5}
Each incompressible, non-parallel to the boundary surface in a solid torus $S^{1} \times D^{2}$ is determined, 
up to isotopy by a slope $\frac{p}{q} \in W_{1}.$  The genus of such a surface is equal to the\
period of the minimal edge-path from $\frac{1}{0}$ to $\frac{p}{q}$.  If the period 
is $\textgreater$0 then the surface is nonorientable and $\partial$-compressible.\
\end{corollary}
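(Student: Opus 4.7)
\emph{Proof proposal.} The plan is to deduce this from Theorem \ref{Theorem 2.3} by decomposing $V=S^{1}\times D^{2}$ as $V=(T^{2}\times I)\cup N$, where $N$ is a thin tubular neighborhood of the core $C=S^{1}\times\{0\}$, glued along $T^{2}\times\{1\}$ so that the meridian of $N$ is identified with the $\frac{1}{0}$-curve on $T^{2}\times\{1\}$. Given an incompressible, non-$\partial$-parallel surface $F\subset V$, I first isotope $F$ to be transverse to $C$ and minimize $|F\cap C|$. If $F\cap C=\emptyset$, then $F\subset T^{2}\times I$ with empty trace on $T^{2}\times\{1\}$, which makes every output of Theorem \ref{Theorem 2.3} contradictory: options (a) and (c) have boundary on both sides of $T^{2}\times I$, $\partial$-parallelism to $\partial V$ is excluded by hypothesis, and $\partial$-parallelism to $\partial N$ gives a torus compressed in $V$ by a meridian disk of $N$. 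Hence $|F\cap C|>0$; for $N$ thin enough, transversality forces $F\cap N$ to be a disjoint union of meridian disks of $N$.

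Set $F'=F\cap(T^{2}\times I)$. It is incompressible in $T^{2}\times I$ (a compressing disk would compress $F$), its trace on $T^{2}\times\{1\}$ is $|F\cap C|$ circles of slope $\frac{1}{0}$, and $\partial F\subset\partial V$ consists of circles of a common slope $\frac{p}{q}$. Apply Theorem \ref{Theorem 2.3} to $F'$; the $\partial$-parallel case is ruled out as above. In the saturated-annulus case the two boundary slopes coincide, forcing $\frac{p}{q}=\frac{1}{0}$, and reassembling yields the meridian disk of $V$ (genus $0$, period $0$). In the nonorientable case (c), the determinant $\det\left[\begin{array}{cc}p & 1\\ q & 0\end{array}\right]=-q$ must be even, so $q$ is even, i.e., $\frac{p}{q}\in W_{1}$. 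Capping $F'$ off by the $|F\cap C|$ meridian disks of $N$ increases $\chi$ by the number of caps and reduces the boundary-component count by the same amount, leaving the nonorientable genus unchanged; hence $\mathrm{genus}(F)$ equals the period of the minimal edge-path from $\frac{1}{0}$ to $\frac{p}{q}$ in $\W_{1}$ by the last clause of Theorem \ref{Theorem 2.3}(c), and $F$ inherits nonorientability from $F'$.

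For $\partial$-compressibility when the period is positive (so $\frac{p}{q}\neq\frac{1}{0}$), let $D_{0}$ be a meridian disk of $V$ and minimize $|F\cap D_{0}|$ by isotopy. Since $\partial F$ is not a union of meridians, $\partial F\cap\partial D_{0}\neq\emptyset$, so $F\cap D_{0}$ contains an arc; an outermost such arc $\alpha$ cuts off a half-disk $\Delta\subset D_{0}$ with $\partial\Delta=\alpha\cup\beta$ and $\beta\subset\partial V$. Minimality of $|F\cap D_{0}|$ prevents $\alpha$ from being $\partial$-parallel in $F$ — a $\partial$-parallelism disk in $F$ combined with $\Delta$ would realize an ambient isotopy strictly reducing $|F\cap D_{0}|$ — so $\Delta$ is a genuine $\partial$-compressing disk. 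The main obstacle will be this last step, namely carrying out the minimization argument rigorously enough to force essentiality of the outermost arc, together with cleanly excluding the boundary-parallel output of Theorem \ref{Theorem 2.3} in the first reduction under the non-$\partial$-parallel hypothesis on $F$.
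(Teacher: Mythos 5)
The paper itself gives no proof of this corollary: it is quoted as a known fact (Bredon--Wood, Rubinstein) that ``follows'' from Theorem~\ref{Theorem 2.3}, and your route --- splitting $V=S^{1}\times D^{2}$ as $(T^{2}\times I)\cup N(C)$, putting $F$ in minimal position with respect to the core $C$, applying Theorem~\ref{Theorem 2.3} to $F'=F\cap(T^{2}\times I)$ and capping off --- is exactly the natural intended derivation. However two steps have genuine gaps. First, your parenthetical justification that $F'$ is incompressible (``a compressing disk would compress $F$'') is not valid: a curve essential in $F'$ may bound a disk in $F$ made up of part of $F'$ together with some of the meridian caps, so a compressing disk for $F'$ need not be one for $F$; and since the torus $\partial N$ is compressible in $V$, Proposition~\ref{Proposition 2.7} cannot be invoked either. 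The fix has to use the minimality of $|F\cap C|$ that you set up but never use here: if $\partial D$ bounds a disk $D'\subset F$ containing $k\geq 1$ caps, then $D\cup D'$ is a sphere, irreducibility of $V$ gives a ball, and an innermost-ball isotopy pushing $D'$ past $D$ reduces $|F\cap C|$ by $k\geq 2$. Related bookkeeping is also skipped: Theorem~\ref{Theorem 2.3}(c) speaks of a connected surface meeting each torus $T^{2}\times\{i\}$ in a \emph{single} curve, so before capping you must show that $F'$ is connected and that $|F\cap C|=1$ in the nonorientable case (ruling out, again by minimality and incompressibility, components such as annuli or tori parallel into $\partial N$ and configurations with several caps); as written you cap ``$|F\cap C|$ meridian disks'' onto a surface to which Theorem~\ref{Theorem 2.3}(c) has been applied as though that number were one.

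Second, the $\partial$-compressibility claim. Your outermost-arc argument is the standard one, but the step you yourself flag --- that minimality of $|F\cap D_{0}|$ forces the outermost arc $\alpha$ to be essential in $F$ --- is precisely the content and does not follow as stated: the parallelism region $R\subset F$ between $\alpha$ and $\partial F$ may itself meet $D_{0}$, so pushing across $\Delta\cup R$ does not obviously reduce $|F\cap D_{0}|$. One must first put $\partial F\cap\partial D_{0}$ in minimal position, remove circles of $F\cap D_{0}$ using incompressibility and irreducibility, and then argue with the region of $\partial V$ bounded by $\beta$ and the arc of $\partial F$ cut off by $R$; equivalently, invoke (or prove) the standard lemma that an incompressible, $\partial$-incompressible surface with boundary in a solid torus is a meridian disk or a boundary-parallel annulus, which forces period $0$. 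Alternatively, since your first part already identifies $F$ with the explicit capped-off model, you could simply exhibit a $\partial$-compressing half-disk of a meridian disk meeting the model in one essential arc, in the spirit of Observation~\ref{Observation 2.19} and Figure 2.29. Finally, the clause ``determined up to isotopy by the slope'' needs a word on assembling isotopies: uniqueness in Theorem~\ref{Theorem 2.3}(c) gives an isotopy of the pieces inside $T^{2}\times I$, and you must extend it across $N$ (easy, since an isotopy of the meridian on $\partial N$ extends to $N$), a point you gloss over.
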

\begin{corollary}\label{Corollary 2.6}
(\cite{B-W},\cite{Ru}).  Let $L(q,p)$ be a lens space.  Then 
\begin{enumerate}
\item[(i)] if $q$ is odd then $L(q,p)$ does not contain any incompressible surface,
\item[(ii)] if $q$ is even then $L(q,p)$ contains exactly one incompressible surface, which is nonorientable, and of genus 
equal to the period of the minimal edge-path joining $\frac{1}{0}$ to $\frac{p}{q}$ in $\W_1$.
\end{enumerate}
\end{corollary}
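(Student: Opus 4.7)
My proof plan. View $L(q,p)$ as the Heegaard decomposition $V_0\cup_\phi V_1$ of two solid tori glued along the Heegaard torus $T$, with coordinates on $T=\partial V_0$ chosen so that the meridian of $V_0$ has slope $\frac{1}{0}$ and the meridian of $V_1$ is glued to the slope $\frac{p}{q}$ curve. Given an incompressible surface $F\subset L(q,p)$, I put $F$ in general position with $T$ and minimise $|F\cap T|$ by standard innermost-disk and outermost-arc surgeries exploiting incompressibility of $F$. The possibility $F\cap T=\emptyset$ is ruled out: $F$ would then be a closed incompressible surface in one solid torus, but by Corollary \ref{Corollary 2.5} every non-boundary-parallel incompressible surface in a solid torus has non-empty boundary, while a boundary-parallel torus is compressible via a meridian disk. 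Hence $F\cap T$ is a non-empty family of essential circles on $T$ of a common slope $\frac{\alpha}{\beta}$ in $V_0$-coordinates, and each $F_i:=F\cap V_i$ is incompressible and non-boundary-parallel in $V_i$.

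Now I apply Corollary \ref{Corollary 2.5} on each side. In $V_0$ it forces $\frac{\alpha}{\beta}\in W_1$, so $\beta$ is even and $\alpha$ is odd; transporting the same intersection slope into $V_1$-coordinates through the gluing matrix, the corresponding condition on $F_1$ becomes that the integer $\beta p-\alpha q$ is even. Since $\beta p-\alpha q\equiv -\alpha q\equiv q\pmod 2$, this forces $q$ to be even, immediately giving part (i) by contrapositive. Assuming now $q$ is even, the canonical choice $\frac{\alpha}{\beta}=\frac{p}{q}$ works: in $V_1$-coordinates it reads $\frac{1}{0}$, so $F_1$ is a union of meridian disks of $V_1$, while $F_0$ is the nonorientable incompressible surface in $V_0$ of slope $\frac{p}{q}$ provided by Corollary \ref{Corollary 2.5}, of genus equal to the period of the minimal $\W_1$-path from $\frac{1}{0}$ to $\frac{p}{q}$. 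Capping $\partial F_0$ by these meridian disks preserves the genus (in the footnote convention) and yields a closed nonorientable surface of the asserted genus, establishing existence.

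The main remaining obstacle is uniqueness up to isotopy. Using additivity of Euler characteristic across $T$, together with the genus formula from Corollary \ref{Corollary 2.5} on each side and the fact that the gluing $\phi\in PSL(2,\Z)$ preserves the combinatorial structure of $\W_1$, the genus of any incompressible $F$ with intersection slope $\frac{\alpha}{\beta}$ equals the sum of the edge-path lengths from $\frac{1}{0}$ to $\frac{\alpha}{\beta}$ and from $\frac{\alpha}{\beta}$ to $\frac{p}{q}$ in the tree $\W_1$ (Proposition \ref{Proposition 2.2}). The tree triangle inequality then gives that this sum is at least the period of the minimal $\W_1$-path from $\frac{1}{0}$ to $\frac{p}{q}$, with equality precisely when $\frac{\alpha}{\beta}$ lies on a minimal path. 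To promote this numerical lower bound into a genuine isotopy statement, I would realise the move ``replace $\frac{\alpha}{\beta}$ by the next vertex toward $\frac{p}{q}$ along the minimal path'' by an explicit isotopy of $F$ that pushes an outermost subsurface across $T$; iterating collapses every incompressible $F$ to the canonical surface of slope $\frac{p}{q}$. Making this explicit isotopy move rigorous, together with the bookkeeping for possibly disconnected $F_i$ and multiple parallel intersection circles, is the most delicate step of the argument.
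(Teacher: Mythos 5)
Your setup (splitting $L(q,p)$ along the Heegaard torus, minimizing $|F\cap T|$, and feeding the pieces into Corollary \ref{Corollary 2.5}) is reasonable, and the mod $2$ computation giving part (i) is essentially sound once one adds the standard case analysis; note, though, that the paper itself offers no proof of this corollary --- it is quoted from Bredon--Wood and Rubinstein --- so you are being judged on your own argument, and there are two genuine gaps in it. First, part (ii) asserts there is \emph{exactly one} incompressible surface, and this uniqueness is precisely the step you leave unproven: your Euler-characteristic/tree-distance count is only asserted (it is not clear how it handles $F\cap T$ with many parallel curves and disconnected pieces $F_i$ with several boundary components each), and the ``push an outermost subsurface across $T$ and iterate'' isotopy, which you yourself flag as the most delicate step, is the actual content of Rubinstein's theorem on one-sided splittings. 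As written, the proposal proves at best a genus lower bound for surfaces meeting $T$ minimally, not uniqueness up to isotopy.

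Second, even existence is incomplete: capping the Bredon--Wood surface $F_0\subset V_0$ with meridian disks of $V_1$ produces a closed nonorientable surface of the right genus, but you never show it is incompressible in $L(q,p)$. You cannot invoke Proposition \ref{Proposition 2.7}(a) here, because that requires the pieces to be $\partial$-incompressible along the trace of the splitting surface, whereas Corollary \ref{Corollary 2.5} itself tells you that $F_0$ is $\partial$-\emph{compressible} whenever its genus is positive; an independent argument (e.g.\ minimal genus in the nonzero class of $H_2(L(q,p);\Z_2)$, or an invariant ``slope sequence'' argument as in the proof of Proposition \ref{Proposition 2.16}) is needed. A smaller point of the same kind: in part (i) you apply Corollary \ref{Corollary 2.5} to each side to force the intersection slope into $W_1$, but after minimizing $|F\cap T|$ a component of $F\cap V_i$ may be a meridian disk, which under Definition \ref{Definition 1.1} is \emph{compressible}, so Corollary \ref{Corollary 2.5} does not literally apply to it; the conclusion survives (the meridian slope $\frac{1}{0}$ lies in $W_1$), but the incompressibility and non-boundary-parallelism of the pieces in minimal position must be argued, not assumed.
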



\begin{proposition}\label{Proposition 2.7}
Let $M$ be an irreducible 3-manifold, and $F$ a 
closed, $2$-sided, incompressible surface in $int M$.  Let $M'$ be a manifold 
obtained from  $M$ cut open along $F$ ($M'$ may be connected or not), and
$F'=F_{1} \sqcup F_{2}$ ``a trace" of $F$ in $M'$.  Let $S$ be a properly embedded 
surface in $M$, which is transverse to $F$.\\ 
	Further, let $S'$ be a surface obtained from $S$ by cutting open $M$ along $F$ (that is we delete $F$ and compactify $M-F$ by two 
	copies of $F$).  Then: 
\begin{itemize}
\item [(a)]  If $S'$ is incompressible in $M'$ and $\partial$-incompressible along\\
$F'$ then S is incompressible in M.
\item [(a')]  If $S'$ is incompressible, $\partial$-incompressible in $M'$ then S is\\
incompressible, $\partial$-incompressible in M.
\item [(b)]  If $S$ is incompressible in M and $M'$ has two components\
($M_{1}$ and $M_{2}$) then $S$ can be deformed by isotopy in such a way\
(the new embedding is still denoted by $S$) that $S'$ is incompressible and 
$S' \cap M_{1}$ is $\partial$-incompressible along $F' \cap M_{1}$.  If we\
assume additionally that $S$ is $\partial$-incompressible, we can conclude\
also that $S' \cap M_{1}$ is $\partial$-incompressible.
\end{itemize}
\end{proposition}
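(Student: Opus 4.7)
The plan is to prove (a) and (a$'$) by contradiction using the classical innermost-disk and outermost-arc arguments, and to prove (b) by choosing $S$ in its isotopy class to minimize the intersection $S \cap F$ and then checking that the assumed essentiality transfers.

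For (a), I would assume $S$ has a compressing disk $D$ in $M$, put $D$ transverse to $F$, and observe that since $F$ lies in the interior of $M$ and $\partial D \subset S$, the intersection $D \cap F$ consists only of circles. An innermost such circle $C$ bounds a subdisk $D_{0}\subset D$; since $F$ is incompressible, $C$ also bounds a disk $D_{0}'\subset F$, and the sphere $D_{0}\cup D_{0}'$ bounds a 3-ball in $M$ by irreducibility. Isotoping $D$ across this ball reduces $|D\cap F|$; iterating leaves $D\subset M'$, with $\partial D$ lying in one subsurface piece $S_{i}$ of $S'$. Since any curve contractible in $S_{i}$ is contractible in the full surface $S$, essentiality of $\partial D$ in $S$ forces essentiality in $S_{i}$, so $D$ compresses $S'$, contradicting the hypothesis.

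For (a$'$), $D$ is instead a $\partial$-compression disk along some component $\partial_{0}M$; its boundary decomposes as $\alpha\cup\beta$ with $\alpha\subset S$, $\beta\subset\partial_{0}M$, and since $F$ is in the interior we have $\beta\cap F=\emptyset$, so $D\cap F$ consists of circles plus arcs with both endpoints on $\alpha$. Circles are killed as above. For an outermost arc $\gamma\subset D\cap F$ one cuts off a subdisk $D_{0}$ with $\partial D_{0}=\gamma\cup\delta$, $\gamma\subset F$, $\delta\subset\alpha$; after cutting $M$ along $F$ this $D_{0}$ becomes a $\partial$-compression disk for $S'$ along $F'$, so the hypothesis forces $\delta$ to be parallel in $S'$ to an arc of $\partial S'$ lying on $F'$. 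That parallelism yields an isotopy of $D$ pushing it across $S$ to remove $\gamma$ from $D\cap F$. After all arcs and circles are eliminated, $D\subset M'$ is a $\partial$-compression disk along the $\partial_{0}M$-part of $\partial M'$, contradicting the $\partial$-incompressibility of $S'$.

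For (b), I would choose a representative of the isotopy class of $S$ minimizing the number of components of $S\cap F$. If $S'$ admitted a compression disk $D$ in $M'$, then either $\partial D$ is essential in $S$, giving a compression disk for $S$ in $M$ and contradicting incompressibility of $S$, or $\partial D$ bounds a disk $E\subset S$ which must intersect $F$; in the latter case $D\cup E$ bounds a ball (after innermost-circle cleanup of $E\cap F$ using incompressibility of $F$ and irreducibility of $M$), and pushing $S$ across the ball strictly reduces $|S\cap F|$, contradicting minimality. The analogous argument, applied to a putative $\partial$-compression disk for $S_{1}:=S'\cap M_{1}$ along $F_{1}$, produces an isotopy eliminating either an arc or a circle of $S\cap F$, again violating minimality. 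The final assertion, assuming $\partial$-incompressibility of $S$ to obtain $\partial$-incompressibility of $S_{1}$ along every component of $\partial M_{1}$, follows by repeating the outermost-arc technique of (a$'$).

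The main obstacle I expect is in part (b): one must verify that the isotopy produced from a hypothetical $\partial$-compression disk of $S_{1}$ genuinely reduces $|S\cap F|$ rather than just redistributing intersections. This relies sensitively on the assumption that $\alpha$ is not boundary-parallel in $S_{1}$, and, because $F$ may or may not separate and because arcs of $S\cap F$ may sit on one or both sides of $F$, requires some bookkeeping in how $S$ is reglued after the isotopy. The innermost-disk arguments used in (a) and the outermost-arc arguments used in (a$'$) and in the boundary case of (b) are, by contrast, entirely standard once irreducibility of $M$ and incompressibility of $F$ are invoked.
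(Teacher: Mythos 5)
The paper itself gives no proof of Proposition 2.7 (it is quoted as a standard cut-open lemma, in the spirit of Waldhausen), so your outline must be judged against the standard innermost-circle/outermost-arc argument, which it follows in spirit. However, your part (a) contains a genuine error: since $\partial D\subset S$ and $S$ meets $F$ in the curves $S\cap F$, the circle $\partial D$ will in general cross those curves, so $D\cap F$ consists of circles \emph{and} arcs; the fact that $F$ lies in the interior of $M$ only excludes arcs ending on $\partial M$, not arcs with endpoints on $\partial D$. As written, your proof of (a) never uses the hypothesis that $S'$ is $\partial$-incompressible along $F'$, and without that hypothesis statement (a) is false -- this is exactly the phenomenon behind the paper's remark that the converse of Proposition 2.8 fails (Example 2.22(ii): pieces incompressible in the pieces of $M$ but $\partial$-compressible along the splitting torus glue up to a compressible $S$). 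The repair is precisely the outermost-arc step you deploy in (a$'$): an outermost arc of $D\cap F$ cuts off a candidate $\partial$-compressing disk for $S'$ along $F'$, the hypothesis yields a parallelism in $S'$ to an arc of $S\cap F$, and pushing $D$ across it reduces $|D\cap F|$; only after all arcs and circles are gone does your concluding step apply. (In (a$'$) you should also check at the end that $\alpha$ stays non-$\partial$-parallel in $S'$, since a parallelism in $S'$ could a priori run over arcs of the cut locus.)

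In part (b), minimizing $|S\cap F|$ does give incompressibility of $S'$ (with the caveat, which you gloss over, that the ball bounded by $D\cup E$ may contain other sheets of $S$, so ``pushing across the ball'' needs an innermost choice or a disk-swap argument), but it does not directly rule out $\partial$-compressions of $S'\cap M_{1}$ along $F'$: if the arc $\beta=D\cap F$ of a $\partial$-compressing disk has both endpoints on the \emph{same} circle of $S\cap F$, the isotopy across $D$ splits that circle and \emph{increases} $|S\cap F|$, so no contradiction with minimality arises; only when the endpoints lie on different circles does the count drop. One needs a finer complexity (e.g.\ lexicographic, using that the $\partial$-compression cuts $S'\cap M_{1}$ along an essential arc and so genuinely simplifies it) or a separate argument, and this is also the reason the conclusion is asymmetric -- only the $M_{1}$ side is claimed $\partial$-incompressible along $F'$ -- a feature your sketch does not engage. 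You correctly flag this as the expected obstacle, but as it stands it is an unresolved gap, not mere bookkeeping; together with the arc omission in (a), the proposal needs real repair rather than polish.
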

If $\partial M$ consists of tori, Proposition \ref{Proposition 2.7} and Theorem 2.3 give us:

\begin{proposition}\label{Proposition 2.8}
Let M be a compact, irreducible 3-manifold with $\partial M$ equal to a collection of tori $T_{1},...,T_{k}$.  
	Let $V_{1},...,V_{k}$ be small regular neighborhoods of $T_{1},...,T_{k}$ and 
$M'=M-int  \bigcup_{i=1}^k   V_{i}$ ($M'$ is homeomorphic to $M$ and $V_{i}$ to $T^{2} \times [0,1]$ for each $i$). 
Then each incompressible, non-parallel to the boundary surface 
$S$ ($\partial S \neq \emptyset)$ properly embedded in M can be obtained by gluing together 
an incompressible, $\partial$-incompressible, non-parallel to 
the boundary surface in $M'$, and incompressible, non-parallel to  
the boundary surfaces in $V_{1},...,V_{k}$ (they are described in 
Theorem 2.3).  In particular, if in addition S is orientable  
or has more than one boundary component on each $T_{i}$, then $S$  
is $\partial$-incompressible (see \cite{W}). 
\end{proposition}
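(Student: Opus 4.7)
The plan is to decompose $S$ by cutting along the collar tori $T_i'=\partial V_i\setminus\partial M$, control the resulting pieces via Proposition \ref{Proposition 2.7}, and then identify the pieces inside each $V_i\cong T^2\times I$ using Theorem \ref{Theorem 2.3}.

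First I would put $S$ in general position with the closed, 2-sided surface $F=T_1'\sqcup\cdots\sqcup T_k'$, so that $S\cap F$ is a disjoint union of circles. If some component of $S\cap F$ is inessential in its torus, take an innermost such circle $c$; it bounds a disk $D\subset T_i'$, and incompressibility of $S$ yields a disk $D'\subset S$ with $\partial D'=c$. The 2-sphere $D\cup D'$ bounds a 3-ball by irreducibility of $M$, and an isotopy through this ball strictly reduces $|S\cap F|$. After finitely many such moves, every component of $S\cap F$ is essential in its $T_i'$, so on each $T_i'$ the circles are mutually parallel of a single slope. Now I would apply Proposition \ref{Proposition 2.7}(b) one torus at a time: each cut separates $M$ (one side being the collar $V_i$), so after a further isotopy the cut-open surface is incompressible in each piece and $\partial$-incompressible along the new boundary sitting inside $M'$. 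Any component of a piece that is parallel to some $T_i$ can be pushed across $T_i'$ and absorbed, strictly reducing $|S\cap F|$ again, so after iteration no surviving piece is parallel to a component of $\partial M$. The pieces inside each $V_i$ are then of the three types listed in Theorem \ref{Theorem 2.3}, while the piece in $M'$ is incompressible, $\partial$-incompressible along the $T_i'$'s, and non-parallel to $\partial M'$. Gluing the pieces back along $F$ recovers $S$, establishing the main statement.

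For the ``In particular'' clause, I would argue by contradiction. Suppose a $\partial$-compressing disk $D$ for $S$ exists along some $T_i$, with arcs $\alpha=D\cap S$ (not parallel in $S$ to $\partial S$) and $\beta=D\cap T_i$. All curves of $\partial S\cap T_i$ share one slope, so $\beta$ joins two such parallel curves $c_1,c_2$ (possibly $c_1=c_2$). Performing the band-sum surgery on $S$ along $D$ produces a surface $S^\ast$. If $S$ is orientable, compatibility of orientations forces the band sum of $c_1,c_2$ to contain a null-homotopic curve on $T_i$; this curve bounds a disk whose boundary was essential in $S$, contradicting incompressibility of $S$. If instead $\partial S$ has at least two components on every $T_j$, the surgery strictly decreases the number of boundary components on $T_i$, and one iterates, combining with the orientable-style argument of \cite{W} to reach the same contradiction. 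The main obstacle will be this last step: rigorously reducing the nonorientable case with several boundary curves on $T_i$ to the orientable argument, without slipping on the mod-$2$ intersection subtleties emphasized in Remark \ref{Remark 2.4}; everything before it is a standard innermost-disk / cut-and-paste routine using the tools already set up in the paper.
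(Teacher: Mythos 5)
Your main decomposition is exactly the route the paper intends: the paper offers no detailed argument for this proposition, simply presenting it as a consequence of Proposition~\ref{Proposition 2.7} and Theorem~\ref{Theorem 2.3}, with the last sentence referred to \cite{W}. Your innermost-circle removal of trivial components of $S\cap F$, the application of Proposition~\ref{Proposition 2.7}(b) torus by torus, the absorption of boundary-parallel pieces, and the identification of the collar pieces via Theorem~\ref{Theorem 2.3} is a correct fleshing-out of that plan. (One point you share with the paper: Proposition~\ref{Proposition 2.7} needs the cutting tori $T_i'$ to be incompressible in $M$; if some $T_i$ is compressible, $M$ is a solid torus and the statement should instead be handled directly, essentially by Corollary~\ref{Corollary 2.5}.)

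The genuine gap is in your treatment of the final ``in particular'' clause, which the paper does not prove but attributes to Waldhausen \cite{W}. Your orientable-case step --- that the band sum of $c_1,c_2$ contains a null-homotopic curve on $T_i$ which ``bounds a disk whose boundary was essential in $S$, contradicting incompressibility of $S$'' --- is a non sequitur: that trivial curve is a boundary curve of the surgered surface $S^{\ast}$, it lies on $T_i$ and bounds a disk in $T_i$, and it is not the boundary of any compressing disk for $S$, so no contradiction with the incompressibility of $S$ has been reached. The standard argument is different: after the routine reduction making $\beta$ essential in its complementary annulus $A\subset T_i-\partial S$, if $\beta$ joins two \emph{distinct} boundary curves (automatic when $\partial S$ has at least two components on $T_i$, with no orientability assumption), glue two parallel copies of the $\partial$-compressing disk $D$ to the rectangle $A-\mathrm{int}\,N(\beta)$ pushed slightly into $\mathrm{int}\,M$; this yields a disk with boundary on $S$ and interior disjoint from $S$, incompressibility forces that boundary to bound a disk in $S$, which forces $S$ to be an annulus, and irreducibility then makes this annulus boundary-parallel, contradicting the non-parallelism hypothesis. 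Orientability of $S$ is needed only in the remaining case where $\beta$ returns to a single boundary curve, which is precisely where the M\"obius-band phenomenon of Corollary~\ref{Corollary 2.5} and $S_{\gamma}^{\partial}$ occurs. Your proposed ``iterate the surgery'' step for the nonorientable case with several boundary curves --- which you yourself flag as the main obstacle --- is both unnecessary and, as sketched, not a proof, since after the surgery you no longer control the original $S$; either supply the disk-swap argument above or simply quote \cite{W}, as the paper does.
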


The converse to Proposition \ref{Proposition 2.8} is false, i.e. even if $S$ 
allows a decomposition as above it could be compressible (see Example \ref{Example 2.22}).

Now, we will use the above ideas to classify nonorientable, 
incompressible, $\partial$-incompressible surfaces in a punctured-torus  
bundle over $S^{1}$ (compare remarks in the preliminary version of \cite{F-H} circulating around 
1980 when I was a PhD student at Columbia).

First we introduce some models and constructions.\\
Let $F$ be a torus with a hole.  We will use two standard models for $F$:\\
(a) either as a  square with the opposite edges identified and a hole cut in the middle, or\\
(b) as a square $[-1,1]\times [-1,1]$ with corners cut and the level $x=-1$ identified with the level $x=1$, and 
the level $y=-1$ identified with the level $y=1$ (Figure 2.2)\\
\centerline{\psfig{figure=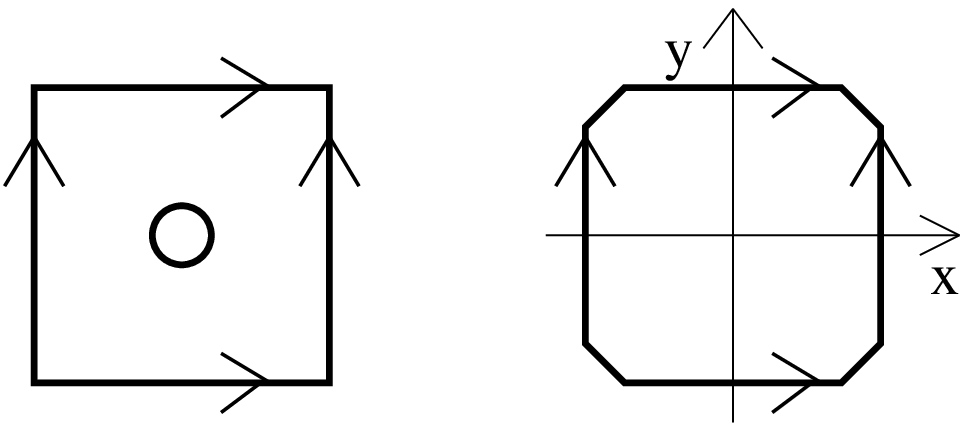,height=2.4cm}}\ \\
\centerline{Figure 2.2; two models for a torus with a hole}

Similarly we will use two ``cube" models for $F \times I$ as a product with interval of models of $F$; see Figure 2.3.
\ \\ 
\centerline{\psfig{figure=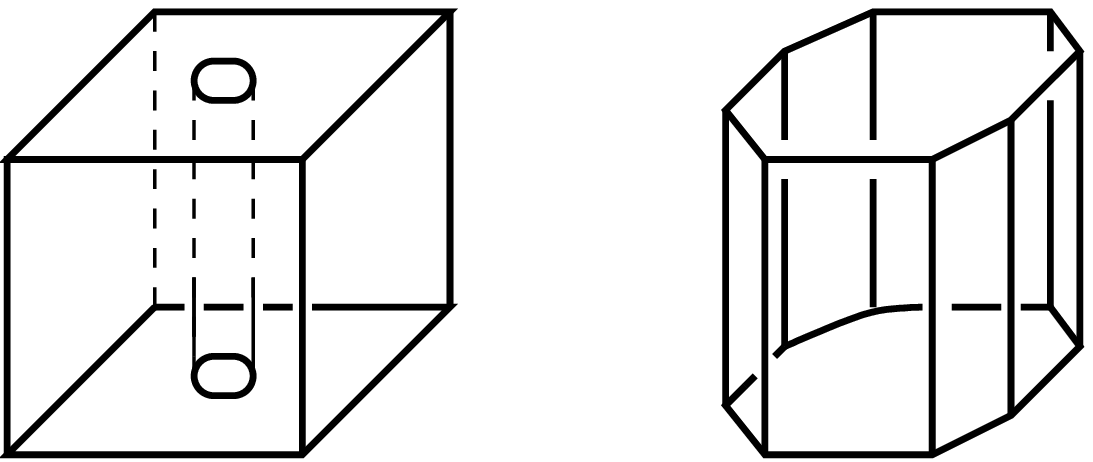,height=3.0cm}} 
\centerline{Figure 2.3; two models for a product $F\times [0,1]$.}
\ \\ \ \\
If a surface $S$ is embedded in $F \times I$ such that the restriction of the natural projection $p:F \times I \rightarrow I$ 
to $S$ (that is $p / S$) is a Morse function, then instead 
of drawing a saddle we will schematically draw the projection of $S$ in the neighborhood of the saddle onto the level of the 
saddle (See Figures 2.4, 2.5 or 2.6).\\
\indent
From now on, we assume that each monodromy map $\phi$ is of the  
form $\phi = \pm\left[\begin{array}{cc} 
a & b \\ 
c & d 
\end{array}\right]$ where $a,b,c,d \geq 0$ (each hyperbolic matrix is  
conjugate to a matrix of such a form).

Recall that elements of $SL(2,Z)$ can be divided into three classes depending on its trace:\\
(e) elliptic if $|tr(\phi)| < 2$, \\
(p) parabolic if $|tr(\phi)| = 2$,\\
(h) hyperbolic if $|tr(\phi)| >2$, or equivalently $\phi$ has two different real eigenvalues.

\begin{construction}\label{Construction 2.9}
Let $\phi:F \rightarrow F$ be a self-homeomorphism of F,\\
defined by $A=\left[\begin{array}{cc} 
a & b \\ 
c & d 
\end{array}\right] \in SL(2,Z)$, i.e. A takes a vector ${y \choose x}$  to
${ay+bx \choose cy+dx}$ \\
          (in particular the slope $\frac{1}{0}$ to $\frac{a}{c}$ and the slope $\frac{0}{1}$ to $\frac{b}{d}$).\\
	  Let $\gamma$ be an edge-path  edge-path in $\W$ (minimal or not) with the successive  vertices 
	 $\ldots , \frac{a_{-1}}{b_{-1}},\frac{a_{0}}{b_{0}},\frac{a_{1}}{b_{1}}, \ldots $. Assume that $\gamma$ is $\phi$-invariant, with period $k$,
	 that is $\phi(\frac{a_{i}}{b_{i}})=\frac{a_{i+k}}{b_{i+k}}$ for all $i$.
Now to each such $\phi$-invariant edge-path $\gamma$,  
we associate the family of surfaces in $M_{\phi}=F \times \R_{/\sim}$ where $(x,t) \sim (\phi(x),t+1)$.  

\begin{itemize}
\item[(a)] First construct surface $\tilde{S_{\gamma}}$ in $F \times \R.$ Let $F_{t}=F \times \{t\}$\\
$\tilde{S_{\gamma}} \cap F_{i/k}$ = the standard circle of slope $\frac{a_{i}}{b_{i}}.$  The saddles are\\
on the levels $\frac{1}{k}(i+\frac{1}{2}),$ for all i.\\

Consider a saddle with the slope $\frac{1}{0}$ below the critical point and the slope $\frac{1}{2}$ above the critical point (Figure 2.4):
\ \\
\centerline{\psfig{figure=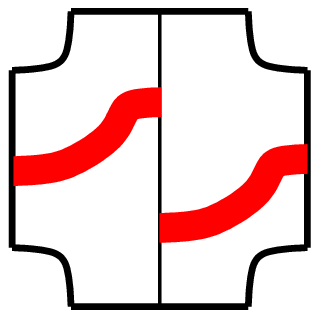,height=3.1cm}}\ \\
\centerline{Figure 2.4; the saddle is changing a circle of slope $\frac{1}{0}$ to a circle of slope $\frac{1}{2}$}
 \ \\
The saddle between $\frac{a_{i}}{b_{i}}$ and $\frac{a_{i+1}}{b_{i+1}}$ in $\tilde{S_{\gamma}}$ is obtained from that 
in Figure 2.4 by applying the homeomorphism given by a linear 
isomorphism which takes the slopes $\frac{1}{0}$ to $\frac{a_{i}}{b_{i}}$ and $\frac{1}{2}$ to $\frac{a_{i+1}}{b_{i+1}}.$ \ 
Now we define a surface $S_{\gamma}^{c}$=$\tilde{S}_{\gamma / \phi},$ in a punctured-torus bundle over $S^1$ with monodromy $\phi$.

\item[(b)] Let $S_{\gamma}^{\partial}$ be a surface obtained from $S_{\gamma}^{c}$ by modifying $S_{\gamma}^{c}$ between 
levels $F_{0}$ and $F_{\frac{1}{2k}}$ by adding one saddle and one horizontal boundary component; see Figure 2.5 (compare Observation 2.18).\\
 \ \\
\centerline{\psfig{figure=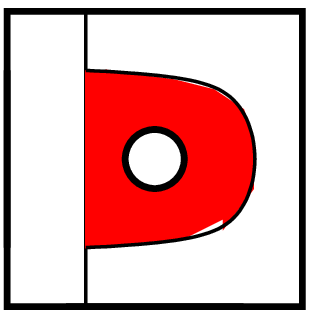,height=3.1cm}}\ \\
\centerline{Figure 2.5; adding a saddle and a horizontal boundary component}
 \ \\
As will be shown later, in Observation \ref{Observation 2.19}, $S_{\gamma}^{\partial}$ is incompressible
but not $\partial$-incompressible. 
\end{itemize}
\end{construction}

\begin{construction}\label{Construction 2.10}
We will define a surface $S_{\gamma}(\varepsilon_{1},...,\varepsilon_{k})$ 
associated to a $\phi$-invariant edge-path $\gamma$ (minimal or not) 
in $\W$ of period $k$, and an element ($\varepsilon_{1},\varepsilon_{2},...,\varepsilon_{k}) \in (\Z_{2})^{k}.$\\
Let $...\frac{a_{-1}}{b_{-1}},\frac{a_{0}}{b_{0}},\frac{a_{1}}{b_{1}}...$ be the successive vertices of $\gamma$ with 
   $\phi(\frac{a_{i}}{b_{i}})=\frac{a_{i+k}}{b_{i+k}}$, and $\varepsilon_{k+i}=\varepsilon_{i}$, for all i.  First we construct
surface $\tilde{S}_{\gamma} (\varepsilon_{1},...,\varepsilon_{k})$ in $F \times \R.$  $\tilde{S} \cap F_{\frac{i}{k}}$ = the standard arc of 
slope $\frac{a_{i}}{b_{i}}.$  The saddles are on the levels $\frac{1}{k}(i+\frac{1}{2}),$ for all i.\\
Consider two schemes of saddles:

 \ \\
\centerline{\psfig{figure=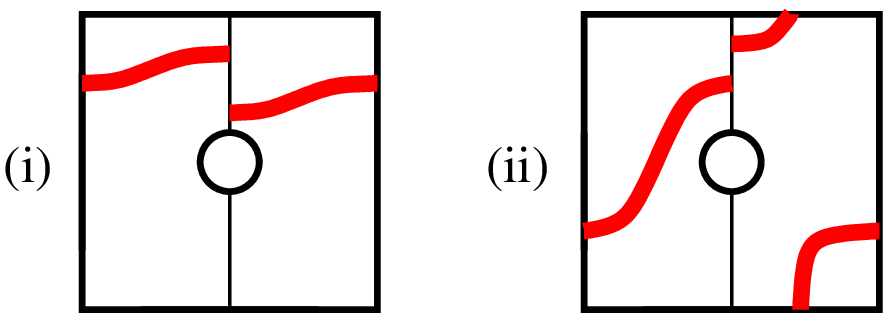,height=3.1cm}}\ \\
\centerline{Figure 2.6; saddles between slopes $\frac{1}{0}$ and $\frac{1}{2}$}

Both saddles lead from arcs of slope $\frac{1}{0}$ to arcs of slope $\frac{1}{2}.$\\
Assume $a_{i}, b_{i}, a_{i+1}, b_{i+1} \geqslant 0.$  The saddle of type $\varepsilon_{i}=0$ (resp. $\varepsilon_{i}=1)$
which leads from an arc of slope $\frac{a_{i}}{b_{i}}$ to an arc of slope $\frac{a_{i+1}}{b_{i+1}}$ 
is obtained from that of Figure 2.6(i) (resp. Figure 2.6(ii)) by 
applying the homeomorphism given by the linear isomorphism 
which takes a vector ${y \choose x}$ to  ${ a_iy +\frac{1}{2}(a_{i+1}-a_i)x \choose b_iy + \frac{1}{2}(b_{i+1}-b_i)x}$ 
 (in particular the slopes 
$\frac{1}{0}$ to $\frac{a_{i}}{b_{i}}$ and $\frac{1}{2}$ to $\frac{a_{i+1}}{b_{i+1}}).$

Now we can finish the construction of $\tilde S_{\gamma}(\varepsilon_{1},...,\varepsilon_{k}).$ Namely, we construct the saddle of 
type $\varepsilon_{i}$ between levels $F_{\frac{i}{k}}$ and 
$F_{\frac{i+1}{k}}$ (first, for $\frac{a_{i}}{b_{i}}, \frac{a_{i+1}}{b_{i+1}} \geqslant 0,$ later using the $\phi$-invariability of 
$\tilde S_{\gamma}(\varepsilon_{1},...,\varepsilon_{k})).$\\
Finally we define $S_{\gamma}(\varepsilon_{1},...,\varepsilon_{k}) = \tilde{S_{\gamma}}(\varepsilon_{1},...,\varepsilon_{k})_{/\phi}.$ 
\end{construction}
Notice that 
the surfaces $\tilde S_{\gamma}(\varepsilon_{1},...,\varepsilon_{k})$ for given $\gamma$ are carried by branched surface 
$\tilde{\Sigma}(\gamma)$ where 
$$\tilde{\Sigma}(\gamma)= \bigcup_{i=-\infty}^{\infty} F_{(i+1/2)/k} \cup \bigcup_{i=-\infty}^{\infty} P_{i},$$        
	where $P_{i}={\alpha}_{i} \times [\frac{i-\frac{1}{2}}{k},\frac{i+\frac{1}{2}}{k}]$ and ${\alpha}_{i}$ 
	is a standard arc of slope $\frac{a_{i}}{b_{i}}.$
	That is, our branched surface contains a punctured-torus on each saddle (critical) level and appropriate slope curves 
	on other levels  (compare \cite{H-T}).

Recall the result of \cite{F-H} that if $\gamma$ is a $\phi$-invariant edge-path in the diagram of $PSL(2,\Z)$ then we can uniquely assign 
to $\gamma$ a $\phi$-invariant surface $\tilde{S_{\gamma}}$ in $F \times \R.$ Let $S_{\gamma}=\tilde{S}_{\gamma/\phi}$ (this definition 
is slightly different than that of \cite{F-H}). 
Let further $\bar{S_{\gamma}}=\partial N(S_{\gamma})$ where $N(S_{\gamma})$ is a tubular neighborhood of $S_{\gamma},$ 
for $\gamma$ of odd period (so $S_{\gamma}$ nonorientable).
\begin{definition}\label{Definition 2.11}
We define, here, a new graph, which we call
the special graph.  The set of vertices of the special 
graph consists of ordered pairs of slopes $(\frac{a}{b},\frac{c}{d})$ which\
satisfy: det $\left[\begin{array}{cc} 
a & c \\ 
b & d 
\end{array}\right]$ = $\pm 1.$  Two vertices $(\frac{a_{1}}{b_{1}},\frac{c_{1}}{d_{1}})$ and $(\frac{a_{2}}{b_{2}},\frac{c_{2}}{d_{2}})$\\
are joined by an edge if and only if either\\
\begin{itemize}
\item[(i)] $\frac{a_{1}}{b_{1}}$ = $\frac{a_{2}}{b_{2}}$ and det $\left[\begin{array}{cc} 
c_{1} & c_{2} \\ 
d_{1} & d_{2} 
\end{array}\right]$ = $\pm 2,$ or\\
\item[(ii)] $\frac{c_{1}}{d_{1}}$ = $\frac{c_{2}}{d_{2}}$ and det $\left[\begin{array}{cc} 
a_{1} & a_{2} \\ 
b_{1} & b_{2} 
\end{array}\right]$ = $\pm2.$\\
\end{itemize}
An edge-path $\gamma$ in the special graph defines two edge-paths\\
in the graph $\W.$  Namely if\\
$\gamma = ...,(\frac{a_{1}}{b_{1}},\frac{c_{1}}{d_{1}}),(\frac{a_{2}}{b_{2}},\frac{c_{2}}{d_{2}}),...$ then\\
$\gamma_{1} = ...,\frac{a_{1}}{b_{1}}, \frac{a_{2}}{b_{2}}, \frac{a_{3}}{b_{3}},...$\ and \
$\gamma_{2} = ...,\frac{c_{1}}{d_{1}}, \frac{c_{2}}{d_{2}}, \frac{c_{3}}{d_{3}},...$\\
(we allow here, for simplicity, repetitions of consecutive slopes).\\
We say that an edge-path $\gamma$ in the special graph is minimal 
if the associated edge-paths $\gamma_{1}$ and $\gamma_{2}$ are minimal in $\W.$
We say that $\gamma$ is $\phi$-invariant if $\phi(\gamma)=\gamma$ or $-\gamma$ ($-\gamma$ is
obtained from $\gamma$ by changing the order of slopes in each \
vertex of $\gamma).$
\end{definition}
\begin{construction}\label{Construction 2.12}
We define a surface, $S_{\gamma}^{sp},$ associated to a\\
$\phi$-invariant (minimal or not) edge path $\gamma$ in the special graph.\\
Let $k$ be a period of $\gamma$ (i.e. $\phi((\frac{a_{i}}{b_{i}},\frac{c_{i}}{d_{i}}))=(\frac{a_{i+k}}{b_{i+k}},\frac{c_{i+k}}{d_{i+k}})$ for all i,\\
or $\phi((\frac{a_{i}}{b_{i}},\frac{c_{i}}{d_{i}}))=(\frac{c_{i+k}}{d_{i+k}},\frac{a_{i+k}}{b_{i+k}})$ for all i).  First construct\\
surface $\tilde{S}_{\gamma}^{sp}$ in $F \times \R.$  Let $F_{t}=F \times \{t\}.$ $\tilde{S}_{\gamma}^{sp} \cap F_{\frac{i}{k}}$ 
	consists of two arcs: one of them of slope $\frac{a_{i}}{b_{i}}$ and the other of slope $\frac{c_{i}}{d_{i}}.$ 
The saddles are on the levels $\frac{1}{k}(i+\frac{1}{2}),$ for all i.\\
	Consider the following saddle (invariant under the matrix $-Id=\left[\begin{array}{cc} 
		-1 & 0 \\ 
		0 & -1
	\end{array}\right]$):\\ \ 

\centerline{\psfig{figure=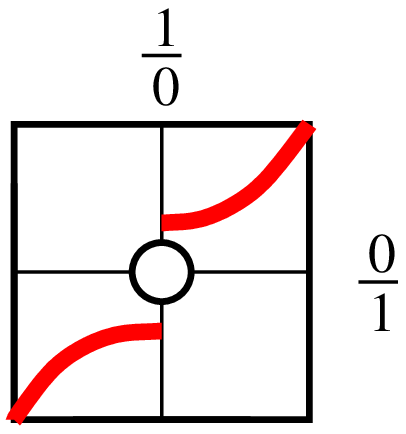,height=3.1cm}}\ \\
\centerline{Figure 2.7; There are two arcs of slopes $\frac{1}{0}$ and $\frac{0}{1}$ under the saddle, }
\centerline{and two arcs of slopes $\frac{1}{2}$ and $\frac{0}{1}$ above the saddle}
\ \\
The saddle (in $\tilde{S}_{\gamma}^{sp}$) associated to an edge $(\frac{a_{i}}{b_{i}},\frac{c_{i}}{d_{i}})(\frac{a_{i+1}}{b_{i+1}},\frac{c_{i+1}}{d_{i+1}})$\\
is obtained from that on Figure 2.7 by applying the homeomorphism defined by the linear isomorphism given by:
\begin{enumerate}
\item[(i)] if the edge is of the type (i) of Definition 2.11:\\
the slope of $\frac{0}{1}$ is taken to $\frac{a_{i}}{b_{i}}$, $\frac{1}{0}$ to $\frac{c_{i}}{d_{i}}$ and $\frac{1}{2}$ 
		to $\frac{c_{i+1}}{d_{i+1}}i=\frac{c_{i}+2a_i}{d_{i}+2b_i};$
\item[(ii)] if the edge is of the type (ii) of Definition 2.11:\\
the slope of $\frac{0}{1}$ is taken to $\frac{c_{i}}{d_{i}}$, $\frac{1}{0}$ to $\frac{a_{i}}{b_{i}}$ and $\frac{1}{2}$ to 
		$\frac{a_{i+1}}{b_{i+1}}= \frac{a_{i}+2c_i}{a_{i}+2d_i}.$
\end{enumerate}
Now we define ${S}_{\gamma}^{sp} = \tilde{S}_{\gamma/\phi}^{sp}.$  If $\phi(\gamma)=\gamma$ then ${S}_{\gamma}^{sp}$ consists of 
two components and if $\phi(\gamma)=-\gamma$ then ${S}_{\gamma}^{sp}$ is connected.\

Let $\gamma$ be a $\phi$-invariant edge-path in the special diagram 
with $\phi(\gamma)=-\gamma.$  $\gamma$ uniquely determines $\phi^{2}$-invariant edge-paths $\gamma'_{1}$ and $\gamma'_{2}$ in the diagram of $PSL(2,\Z)$:\\
if $\gamma = \ldots,(\frac{a_{0}}{b_{0}},\frac{c_{0}}{d_{0}}), (\frac{a_{1}}{b_{1}},\frac{c_{1}}{d_{1}}),\ldots,(\frac{a_{k}}{b_{k}},\frac{c_{k}}{d_{k}}),\ldots$ 
	where\\
$\phi(\frac{a_{i}}{b_{i}},\frac{c_{i}}{d_{i}})=(\frac{c_{i+k}}{d_{i+k}},\frac{a_{i+k}}{b_{i+k}}),$ 
	then $\gamma'_{1} = ...,\frac{a'_{0}}{b'_{0}}, \frac{a'_{1}}{b'_{1}},...,\frac{a'_{2k}}{b'_{2k}},...$\\
where 
\[
	\frac{a'_{i}}{b'_{i}} = 
	\left\{
	\begin{array}{ll}
		\frac{a_{i}}{b_{i}} & \mbox{when $0\leq i \leq k$}, \\
		\frac{c_{i-k}}{d_{i-k}} & \mbox{when $k\leq i \leq 2k$}. 
	\end{array}
	\right.
\]
$\gamma'_{2}$ is defined similarly, using $-\gamma$ in the place of $\gamma$ (so 
$\gamma'_{2} = \phi(\gamma'_{1})).$  In fact $S_{\gamma'_{1}} \cup S_{\gamma'_{2}}$ is the boundary of a regular 
neighborhood of the (non-connected) lifting of $S_{\gamma}^{sp}$ to $M_{\phi^{2}}.$

\end{construction}

Now we are ready to formulate our main theorem.

\begin{theorem}\label{Theorem 2.13}
Let $M_{\phi}$ be a punctured-torus bundle over $S^{1}$ with\\
a hyperbolic monodromy map $\phi$ (as in \cite{F-H}, for convenience, we\\
shall usually not distinguish the open manifold $M_{\phi}$ from its\\
natural compactification obtained by adding a boundary torus).\\
Then:
\begin{itemize}
\item[(a)] Each closed, connected, incompressible surface in $M_{\phi}$ is either 
\begin{itemize}
\item[(i)] a torus parallel to the boundary, or 
\item[(ii)] isotopic to one of 
nonorientable surfaces $S_{\gamma}^{c}$, where $\gamma$ is 
a minimal, $\phi$-invariant edge-path in $\W.$
\end{itemize}
\item[(b)]
Each connected, incompressible surface, S, in $M_{\phi}$ with $\partial S$
parallel to the boundary of a fiber is either an annulus parallel 
to $\partial M_{\phi}$, or
\begin{itemize}
\item[(i)] isotopic to a fiber (then $\partial$-incompressible), or
\item[(ii)] isotopic to one of 
nonorientable surfaces $S_{\gamma}^{\partial},$ where $\gamma$ is a minimal,
$\phi$-invariant edge-path in $\W.$
\end{itemize}
\item[(c)]
Each connected, incompressible, $\partial$-incompressible surface, S,
in $M_{\phi}$ with $\partial S$ ($\neq \emptyset$) transverse to each fiber is either
\begin{itemize}
\item[(i)] isotopic to one of the surfaces $S_{\gamma}$ indexed by a minimal, $\phi$-invariant 
edge-path $\gamma$ in the diagram of $PSL(2,\Z)$, or 
		to $\bar{S}_{\gamma}==\partial N(S_{\gamma})$ (where $N(S_{\gamma})$ is a tubular neighborhood of $S_{\gamma},$ ) 
		where the period of $\gamma$ is odd and $\gamma$ is minimal and\
$\phi$-invariant in the diagram of $PSL(2,\Z)$, or 
\item[(ii)] isotopic to one of the surfaces $S_{\gamma}(\varepsilon_{1},...,\varepsilon_{k})$, where\
$\gamma$ is a minimal $\phi$-invariant edge-path in $\W$, or 
\item[(iii)] isotopic to a surface $S_{\gamma}^{sp}$ associated to a minimal,
$\phi$-invariant edge-path $\gamma$ in the special graph with $\phi(\gamma)=-\gamma.$
\end{itemize}
\end{itemize}
\end{theorem}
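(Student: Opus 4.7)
The plan is to follow the Floyd–Hatcher/Hatcher–Thurston strategy: lift $S\subset M_\phi$ to $\tilde S\subset F\times\R$, put the projection $p|_{\tilde S}\colon \tilde S\to\R$ in generic Morse position, and read off a $\phi$-invariant edge-path (in $\W$, in $\W$ with $\varepsilon$-decorations, or in the special graph) from the sequence of regular-level slopes. First I would isotope $S$ so that $p|_S$ is Morse and the intersections with regular fibers have no trivial components. Using incompressibility, any circle of $\tilde S\cap F_t$ bounding a disk in $F_t$ can be capped and pushed off by an innermost-disk argument; similarly, using $\partial$-incompressibility in case (c), any boundary-parallel arc of $\tilde S\cap F_t$ can be removed. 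Orientability/slope constancy within a regular interval then gives that each component of $\tilde S\cap F_t$ is an essential simple closed curve or essential arc of some definite slope.

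Next I would classify the saddles. Between two consecutive critical levels the slope is constant; at each saddle the slope changes by a local surgery on one essential curve or arc. A direct local analysis at a single saddle of the possible essential-preserving surgeries shows that on the slope of curves the transition is by an edge of $\W$ (the computation behind the factor-of-two determinant), that on arcs alone there are the two local models of Figure 2.6 (encoding $\varepsilon_i\in\Z_2$), and that on mixed configurations (one arc plus one curve) the allowable transition is exactly an edge of the special graph of Definition 2.11, with the two models of Definition 2.11(i),(ii) being the only possibilities. Concatenating these gives the required $\phi$-invariant edge-path whose period equals the number of saddle levels in one fundamental domain. Using $\phi$-invariance of $\tilde S$, the path closes up to the desired invariant path in the respective diagram.

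For minimality I would argue by contradiction: if the edge-path has a local back-and-forth $\frac{a_{i-1}}{b_{i-1}}=\frac{a_{i+1}}{b_{i+1}}$, then two consecutive saddles in $\tilde S$ cancel in pairs by an isotopy supported in $F\times[\tfrac{i-1/2}{k},\tfrac{i+3/2}{k}]$; since the monodromy is hyperbolic, after finitely many such cancellations no further reduction is possible, and the result is exactly one of the model surfaces $S_\gamma^c$, $S_\gamma^\partial$, $S_\gamma(\varepsilon_1,\ldots,\varepsilon_k)$, $S_\gamma$, $\bar S_\gamma$, or $S_\gamma^{sp}$ from Constructions \ref{Construction 2.9}, \ref{Construction 2.10}, \ref{Construction 2.12}. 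The degenerate outputs (no saddles at all) are handled separately: they give the boundary-parallel torus, an annulus parallel to $\partial M_\phi$, a fiber (case (b)(i)), or, via Proposition \ref{Proposition 2.7} and Theorem \ref{Theorem 2.3} applied fiberwise, the enumerated exceptions. Case (a) is then obtained by excluding arcs from the intersection pattern; case (b) by forcing the horizontal boundary component that Figure 2.5 introduces; and case (c) by splitting according to whether $\tilde S\cap F_t$ is arcs only, mixed arcs and curves, or curves only, the last of which, being closed and oriented double-cover-like, yields $\bar S_\gamma$ when the period is odd.

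The main obstacle will be the local saddle analysis in case (c), in particular ruling out spurious transitions and identifying the two $\varepsilon$-types in Figure 2.6 and the $-\mathrm{Id}$-invariant mixed model of Figure 2.7 as the only incompressibility-and-$\partial$-incompressibility-compatible surgeries. The hyperbolicity of $\phi$ is used crucially here: it guarantees that slopes drift under iteration so that no genuine edge-path is eventually periodic with period $1$, and it rules out the exceptional elliptic/parabolic configurations in the saddle-cancellation step. Once this local picture is established, the global matching with the surfaces produced by Constructions \ref{Construction 2.9}, \ref{Construction 2.10}, \ref{Construction 2.12} is a bookkeeping verification on the associated branched surface $\tilde\Sigma(\gamma)$.
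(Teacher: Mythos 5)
Your overall strategy -- Morse position with respect to the bundle projection, reading a $\phi$-invariant edge-path off the sequence of regular-level slopes, and matching the result with the model surfaces of Constructions \ref{Construction 2.9}, \ref{Construction 2.10}, \ref{Construction 2.12} -- is indeed the paper's strategy, but two of your key steps are wrong as stated. The most serious is your minimality argument. You claim that a back-and-forth $\frac{a_{i-1}}{b_{i-1}}=\frac{a_{i+1}}{b_{i+1}}$ can be removed because ``two consecutive saddles in $\tilde S$ cancel in pairs by an isotopy.'' They do not: for the two possible configurations of consecutive essential (resp.\ o-essential) saddles realizing such a back-and-forth, the paper exhibits (Figures 2.28--2.29) an explicit compressing disk in one configuration and a $\partial$-compressing disk in the other. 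So the correct mechanism is a contradiction -- if the path traced by an incompressible, $\partial$-incompressible $S$ were non-minimal, $S$ would be compressible or $\partial$-compressible -- and not a cancellation. In fact, if your cancellation-by-isotopy were available, a model surface built on a non-minimal path would be isotopic to one built on a minimal path, which is incompatible with Proposition \ref{Proposition 2.16} (minimal paths give incompressible, $\partial$-incompressible surfaces) combined with the compressibility of the non-minimal models. This step must be replaced, not merely fleshed out.

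Second, your local analysis in case (c) misidentifies the configurations. The surfaces $S_{\gamma}^{sp}$ meet each regular fiber in \emph{two essential arcs of two different slopes} forming a vertex of the special graph (determinant $\pm 1$), not in ``one arc plus one curve''; the saddle of Figure 2.7 modifies one of the two arcs. Your trichotomy ``arcs only / mixed arcs and curves / curves only,'' with $\bar S_{\gamma}$ produced by the ``curves only'' case, cannot occur: with $\partial S\neq\emptyset$ transverse to the fibers every regular level contains arcs, and $\bar S_{\gamma}$ (boundary of a tubular neighborhood of $S_{\gamma}$, $\gamma$ of odd period) arises together with $S_{\gamma}$ in the case when the number of arcs of each slope is \emph{even}, so that only e-essential saddles occur and the Floyd--Hatcher analysis applies; cases (c)(ii) and (c)(iii) are then distinguished by whether a regular fiber meets $S$ in one arc or in two arcs interchanged by $\phi$. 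Finally, you pass over the place where the paper does most of its work in cases (a) and (b): eliminating peripheral circles by induction on their number $n$ (the ``corner'' in the rectangle $R$ and the commutation of unessential with essential saddles, Figures 2.15--2.17, 2.22 and Observation \ref{Observation 2.18}), and ruling out a pair of consecutive saddles each adding a horizontal boundary component (Figure 2.18, which forces $\partial$-compressibility). Without these steps the claim that $S$ is isotopic to one of the model surfaces, rather than merely carrying the same slope data, is not established.
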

\begin{definition}\label{Definition 2.14}
Consider a symbol $\gamma[\varepsilon_{1},...,\varepsilon_{k}]$ where $\gamma$ is a
minimal, $\phi$-invariant edge-path in $\W$ of period $k$ and
$[\varepsilon_{1},...,\varepsilon_{k}]\in(\Z_{2})^{k}.$  This symbol uniquely determines a 
$\phi$-invariant (minimal or not) edge-path $\gamma'$ in the diagram 
of $PSL(2,\Z)$:\\
If $\gamma$ is defined by a sequence $...\frac{a_{-2}}{b_{-2}},\frac{a_{0}}{b_{0}},\frac{a_{2}}{b_{2}},...,\frac{a_{2k}}{b_{2k}},...$ where\\
$\phi(\frac{a_{i}}{b_{i}})=\frac{a_{i+2k}}{b_{i+2k}}$, then $\gamma'$ is defined by the sequence of 
vertices $...\frac{a_{-1}}{b_{-1}},\frac{a_{0}}{b_{0}},...,\frac{a_{2k-1}}{b_{2k-1}},\frac{a_{2k}}{b_{2k}},...$ where\\
$$\frac{a_{2i+1}}{b_{2i+1}}=
\left\{
\begin{array}{ccc}
\frac{\frac{1}{2}(a_{2i+2}-a_{2i})}{ \frac{1}{2}(b_{2i+2}-b_{2i})}  & if & \varepsilon_{i+1}=0 \\
\frac{\frac{1}{2}(a_{2i+2} + a_{2i})}{ \frac{1}{2}(b_{2i+2} + b_{2i})}  & if & \varepsilon_{i+1}= 1.
\end{array}\right. $$

This formula is valid for $\frac{a_{2i}}{b_{2i}}, \frac{a_{2i+2}}{b_{2i+2}} \geq 0.$  We use the\
assumption that $\gamma'$ is $\phi$-invariant, to get all vertices of $\gamma'$\
(compare the remark before Construction 2.9).  In fact $\gamma'$\
is associated with the boundary of a regular neighborhood\\
of $S_{\gamma}(\varepsilon_{1},...,\varepsilon_{k})$ in $M_{\phi}$.\\
In considerations below, we consider a part of $\gamma$ in one period with vertices $\frac{a_{i}}{b_{i}} \geq 0.$ \\
Let $\sigma_i=$ 
	\[
\left\{
\begin{array}{ccc}
1   & if & \frac{a_{2i+2}}{b_{2i+2}} > \frac{a_{2i}}{b_{2i}} \mbox { i.e. the edge } \frac{a_{2i}}{b_{2i}},\frac{a_{2i+2}}{b_{2i+2}} \mbox{ goes to the left in the 
diagram of $PSL(2,\Z$)}\\
-1  & if & \frac{a_{2i+2}}{b_{2i+2}} < \frac{a_{2i}}{b_{2i}} \mbox { i.e. the edge } \frac{a_{2i}}{b_{2i}},\frac{a_{2i+2}}{b_{2i+2}} \mbox{ goes to the 
right in the diagram of $PSL(2,\Z$)} ).
\end{array}\right. 
	\]
To complete the definition we introduce an equivalence relation among symbols\\
$\gamma[\varepsilon_{1},...,\varepsilon_{k}]$ by elementary equivalences:\\
$\gamma[\varepsilon_{1},...,\varepsilon_{i},\varepsilon_{i+1},...,\varepsilon_{k}] \sim \gamma[\varepsilon_{1},...,1-\varepsilon_{i},
1-\varepsilon_{i+1},...,\varepsilon_{k}]$ \\
if $\frac{a_{2i+2}}{b_{2i+2}}=\frac{a_{2i-2}\pm 2a_{2i}}{b_{2i-2}\pm 2b_{2i}}$ and either \\ 
(i) $\varepsilon_{i}=\varepsilon_{i+1}=0$ and $\sigma_{i-1}=-\sigma_{i}$ or\\
 (ii) $\varepsilon_{i}=1-\varepsilon_{i+1}$ and $\sigma_{i-1}=\sigma_{i}.$
\end{definition}
\begin{example}\label{Example 2.15}(Compare \cite{H-T})  
Consider a pair of consecutive essential saddles of a surface $S_{\gamma}(\varepsilon_{1},...,\varepsilon_{k})$ with $\gamma$ 
a minimal, invariant edge-path in $\W.$  Suppose the relative heights 
of these two saddles can be reversed by an isotopy of the 
surface supported between the levels slightly below the first 
saddle and slightly above the second, which does not introduce any other critical points.  
If the two saddles are put on the same level, then there are only three possible configurations, 
up to level preserving isotopy and change of coordinates (Figure 2.8):\

\centerline{\psfig{figure=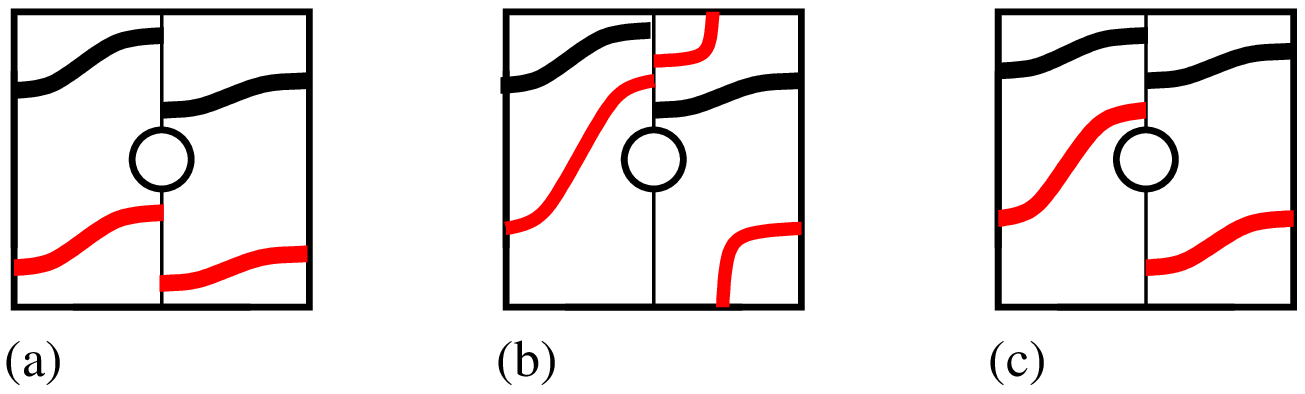,height=3.1cm}}\ \
\centerline{Figure 2.8;  possibility of 2 -saddles on the same level}\ 

\begin{itemize}
\item[(a)] Symbol $\gamma[\varepsilon_{1},...,\varepsilon_{k}]$ remains unchanged.
\item[(b)] The pictures change from\\
\centerline{\psfig{figure=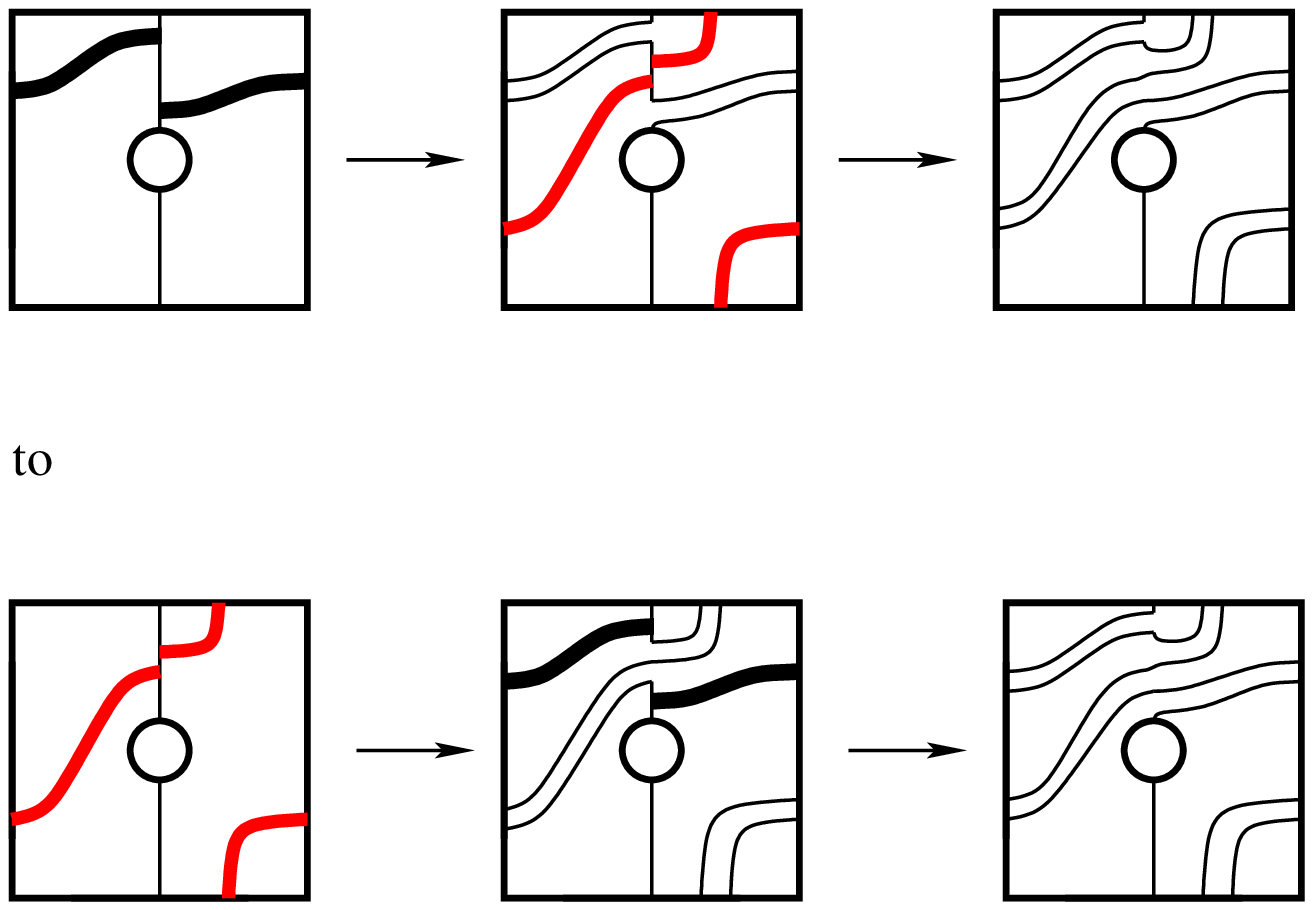,height=7.0cm}}\ \
\centerline{Figure 2.9;  two orders of performing saddles from (b) of Figure 2.8}\

\ \\
$\gamma$ is not changed and the change of $[\varepsilon_{1},...,\varepsilon_{k}]$ reflects the 
equivalence (i) from Definition \ref{Definition 2.14}.\
\item[(c)] The pictures change from \\
 \centerline{\psfig{figure=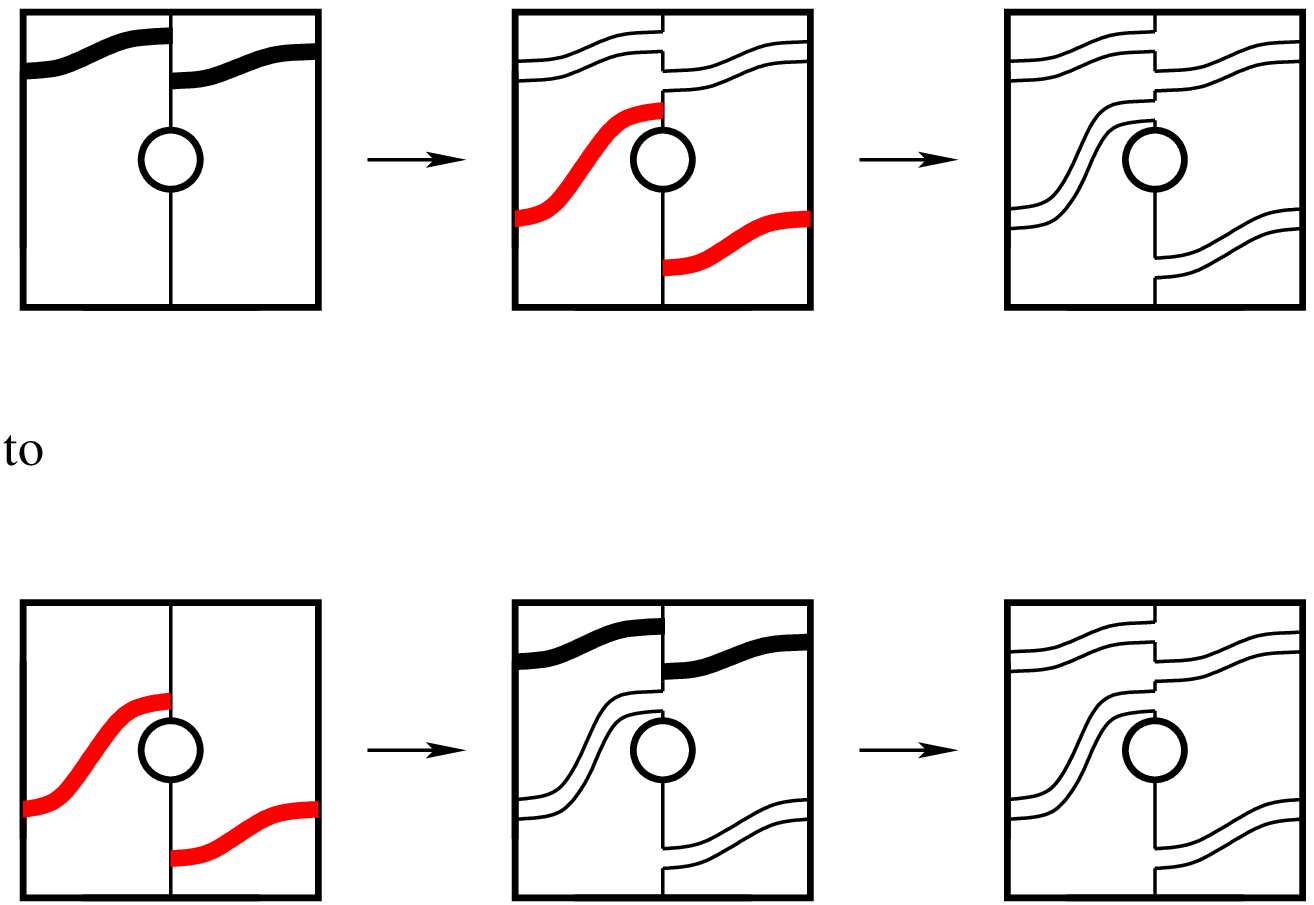,height=7.0cm}}\ 
\centerline{Figure 2.10;  two orders of performing saddles from (c) of Figure 2.8}\

$\gamma$ is not changed and the change of $[\varepsilon_{1},...,\varepsilon_{k}]$ reflects the equivalence (ii) from Definition 2.14.
\end{itemize}
\end{example}

To complete Theorem \ref{Theorem 2.13}, we need:
\begin{proposition}\label{Proposition 2.16}
Let $\gamma$ be a minimal, $\phi$-invariant edge-path in either the diagram of $PSL(2,\Z)$, or $\W$, or in the special graph.\\ Then
surfaces $S_{\gamma},\bar{S_{\gamma}},S_{\gamma}^{c},S_{\gamma}^{\partial},S_{\gamma}(\varepsilon_{1},...,\varepsilon_{k})$ 
and $S_{\gamma}^{sp}$ are incompressible 
(if defined) and furthermore surfaces $S_{\gamma},\bar{S_{\gamma}},S_{\gamma}(\varepsilon_{1},...,\varepsilon_{k})$
and $S_{\gamma}^{sp}$ are $\partial$-incompressible.  Two surfaces from the above 
are isotopic if and only if the following conditions are satisfied:
\begin{itemize}
\item[(i)] the surfaces are associated with the same $\gamma$ (up to
sign, in the case of $S_{\gamma}^{sp}),$
\item[(ii)] they are in the same class 
($S_{\gamma},\bar{S_{\gamma}},S_{\gamma}^{c},S_{\gamma}^{\partial},S_{\gamma}(\varepsilon_{1},...,\varepsilon_{k})$ 
or $S_{\gamma}^{sp}$) and 
\item[(iii)] $[\varepsilon'_{1},...,\varepsilon'_{k}] \sim [\varepsilon_{1}'',...,\varepsilon_{k}'']$ 
if we deal with surfaces of
type $S_{\gamma}(\varepsilon_{1},...,\varepsilon_{k})$ (see Definition \ref{Definition 2.14}).
\end{itemize}
\end{proposition}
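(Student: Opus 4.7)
The proof has two halves: (A) the incompressibility (and $\partial$-incompressibility, where claimed), and (B) the classification up to isotopy via the equivalence of symbols in Definition 2.14. My plan is to handle (A) via the branched surface carrying the family and (B) via a Morse-position argument on the projection $M_\phi \to S^1$.

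For (A), I would lift to the infinite cyclic cover $F\times\R$, where each of our surfaces becomes the $\phi$-equivariant surface $\tilde S_{\gamma}$, $\tilde S_{\gamma}(\varepsilon_1,\ldots,\varepsilon_k)$, $\tilde S_{\gamma}^{sp}$, etc., carried by the branched surface $\tilde\Sigma(\gamma)$. A compressing disk $D$ in $M_\phi$ (or a $\partial$-compressing disk for the asserted $\partial$-incompressibility claims) lifts to a disk in $F\times\R$, or more usefully, its boundary gives a circle on the surface which must be null-homotopic in a fiber $F_t$ for some level $t$. Putting $D$ in general position with respect to the levels $F_t$ and performing the standard innermost-disk/outermost-arc procedure, I can arrange that the slopes read off of $\partial D$ on the saddle levels form a non-minimal loop in $\W$ (or in $PSL(2,\Z)$ or the special graph, depending on the case). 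The minimality of $\gamma$ then gives the contradiction, using Proposition 2.2 (which tells us $\overline W$ is a forest), together with Proposition 2.7 to pass between the cut-open manifold $F\times I$ and the bundle. The case of $\bar S_\gamma$ follows by applying Proposition 2.7(a') to the pair $(S_\gamma,\bar S_\gamma)$, since cutting $M_\phi$ along $S_\gamma$ leaves $\bar S_\gamma$ as two parallel copies in a twisted $I$-bundle.

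For (B), given an incompressible, $\partial$-incompressible surface $S$ in $M_\phi$ of one of the types in Theorem 2.13(c), I put $p|_S : S\to S^1$ in Morse position. The circles appearing in the regular levels $F_t$ must be essential (by incompressibility) and so have well-defined slopes; essential saddles change the slope by an edge of $\W$ (or an edge of the appropriate graph), and one shows the resulting sequence of slopes is a $\phi$-invariant, and after cancelling inessential critical points, minimal edge-path $\gamma$. The surface is thus realized as one of the model surfaces associated to $\gamma$. Two model surfaces associated to the same $\gamma$ and in the same class differ only by the choice of the $\varepsilon_i$'s (in the $S_\gamma(\varepsilon_1,\ldots,\varepsilon_k)$ case). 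Any isotopy between them can be made level-preserving except at finitely many moments where the relative height of two consecutive saddles is exchanged, and Example 2.15 enumerates exactly the three local pictures; the equivalence relation in Definition 2.14 is defined precisely to record the resulting changes of the $\varepsilon$-vector. Conversely, each elementary equivalence is realized by an ambient isotopy by running the pictures of Figures 2.9 and 2.10 backwards.

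The main obstacle, as anticipated in the introduction, is the combinatorial analysis in (B): verifying that the only level-crossings of saddles that produce non-trivial changes are the (b) and (c) cases of Figure 2.8, and that those changes are faithfully encoded by cases (i) and (ii) of Definition 2.14 with the correct bookkeeping of signs $\sigma_i$. A secondary delicate point is handling the cases where saddles near the boundary torus interact with the horizontal boundary component introduced in $S_\gamma^\partial$, which is why $S_\gamma^\partial$ is incompressible but \emph{not} $\partial$-incompressible (Observation 2.19), and why $\partial$-incompressibility of $S_\gamma^{sp}$ requires the extra care afforded by the special graph's constraint $\det=\pm 1$ on the paired slopes. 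Once these local analyses are set up, assembling them into the proof of both directions of the isotopy statement is routine, though lengthy.
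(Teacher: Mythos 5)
Your classification half (B) is essentially the paper's argument and is fine in outline: put the surface in Morse position, note that a generic isotopy is level-respecting except at isolated catastrophes, and observe that the only catastrophe that can alter the data is two saddles trading heights, which is exactly Example 2.15 and is recorded by the equivalence of Definition 2.14. (You do overstate it: a generic isotopy also creates/cancels pairs of critical points and can create trivial or peripheral circles, so one must check, as the paper does, that the invariant properties survive these moves too, the delicate point being that an essential saddle stays essential when interchanged with another saddle, which uses the tree structure of $\W$ from Proposition 2.2.) The genuine gap is in your half (A), the incompressibility. Taking a compressing disk $D$, putting it in general position with the fibers, and ``reading off a non-minimal loop in $\W$ from the slopes of $\partial D$ on the saddle levels'' does not work as stated: $\partial D$ is a single curve on $S$ meeting each fiber in points, $D$ may lie entirely between two consecutive critical levels, and the innermost-disk/outermost-arc routine only removes intersection curves of $D$ with fibers -- it never produces the non-minimal edge-path you want to contradict, and minimality of $\gamma$ is never brought to bear. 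The paper's mechanism is different: it shows that the properties (i) every trivial circle of $S\cap F_t$ bounds a disk in $S$, (ii) control of peripheral circles, (iii) one slope per level tracing the vertex sequence of the minimal invariant $\gamma$, are preserved under \emph{any} generic isotopy of the model surface; incompressibility then follows from the invariance of (i) alone, by shrinking a small subdisk $D'\subset D$ into a fiber, extending the shrinking to an isotopy of $S$, and concluding that $\partial D$ bounds a disk in $S$. Your proposal contains no substitute for this step.

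A second omission concerns the ``furthermore'' clause. You never prove $\partial$-incompressibility of $S_{\gamma}(\varepsilon_{1},\ldots,\varepsilon_{k})$ when it has a single boundary curve, nor of $S_{\gamma}^{sp}$; the remark that the special graph imposes $\det=\pm1$ on the paired slopes is not an argument. The paper's device is: when $b=2$, $\partial$-incompressibility follows from condition (iii) together with Proposition 2.8; when $b=1$, and for $S_{\gamma}^{sp}$, one lifts to the double cover $M_{\phi^{2}}$, where the lifted surface has two boundary components (respectively contains a component of type $S_{\gamma_{1}}(\varepsilon_{1},\ldots,\varepsilon_{k})$ for the associated $\phi^{2}$-invariant path $\gamma_1$), is incompressible and hence $\partial$-incompressible, and $\partial$-incompressibility descends to the original surface. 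Without this covering-space step (or some replacement) the $\partial$-incompressibility assertions of the proposition remain unproved; similarly, your treatment of $\bar S_{\gamma}$ via Proposition 2.7(a') is plausible but is not developed enough to stand on its own.
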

Let $g(S)$ denote the genus of a surface $S$, $b(S)$ the number
of boundary curves and $sl(S)$ the slope of $\partial S.$  In this last 
case we have to establish a coordinate system of $H_{1}(\partial M_{\phi})$ to have slope well defined.
The second generator, longitude, of $H_{1}(\partial M_{\phi})$ is determined by 
the boundary of a fiber (with the clockwise orientation; see
Figure 2.11).  To define the first generator, meridian, of $H_{1}(\partial M_{\phi})$
we have to consider two cases (taking into account the fact that for hyperbolic $\phi$ , $|tr(\phi)|>2$):
\begin{itemize}
\item[(a)] $tr \phi \textgreater 0;$ so $\phi$ has two positive eigenvalues.  Then the 
restriction of $\phi$ to the boundary of a fiber ($\partial F$ is
understood to be the set of angles) has four fixed
points, say $\pm \alpha_{1}$ and $\pm \alpha_{2}$.
\\
\centerline{\psfig{figure=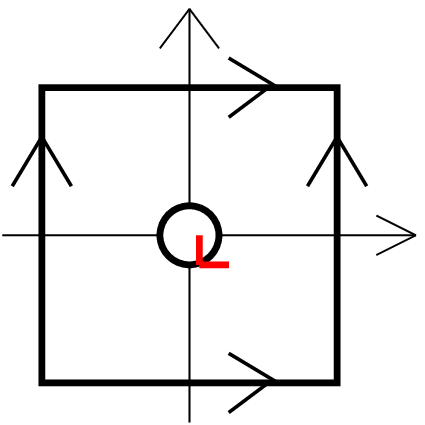,height=4.0cm}}\
\centerline{Figure 2.11;  convention for  orienting the longitude, that is the boundary of the fiber}\

Now the image, under projection $F \times \R \rightarrow M_{\phi}$, of the straight
line in $\partial F \times \R$ which joins $(\alpha_{1},0)$ and $(\alpha_{1},1)$ is a circle which
determines the first generator of $H_{1}(\partial M_{\phi}).$
\item[(b)] $tr \phi \textless 0;$ so $\phi$ has two negative eigenvalues.  Then the 
restriction of $-\phi$ to $\partial F$ has four fixed points, say $\pm \alpha_{1}$
and $\pm \alpha_{2}$, so, in particular, $\phi(\alpha_{1})=-\alpha_{1}.$  Let $\lambda$ be the curve in $\partial F \times \R$ 
given by the equation $z=e^{\pi it}$ where $z \in \partial F$ and $t \in \R$ (so $\lambda$ 
joins $(\alpha_{1},0)$ and $(-\alpha_{1},1)$ with a negative half twist with 
respect to the chosen orientation of $\partial F$).  The image of $\lambda$ 
under projection $F \times \R \rightarrow M_{\phi}$ determines the first generator 
of $H_{1}(\partial M_{\phi}).$  The slope of a curve on $\partial M_{\phi}$ is defined to be
$\frac{\mbox{second coordinate of the curve}}{\mbox{first coordinate of the curve}}.$  
\end{itemize}
\begin{proposition}\label{Proposition 2.17}
The following table establishes\\
dependences among $\gamma$ (of period $k$), $b(S)$, $g(S)$, $sl(S)$. $L$ (or $L_{\gamma}$) denotes the number of ``left turns" in 
$\gamma$; similarly $R$ or ($R_{\gamma}$) denotes the number of ``right turns" in $\gamma$ (see Figure 3.1).
Compare \cite{F-H} Table 1 which we follow partially:

\small{
$$
\begin{array}{lllllllllllll}
\begin{tabular}{c||ccccccccccc} 
$S$ &k & $tr \phi$ & $g(S)$ & $b(S)$ & $sl(S)$ & orientation  \\ 
\hline \hline 
$S_{\gamma}$ & odd  & positive & $k+1$ & 1                      & $\frac{L-R}{4}$   & nonorientable   \\ 
$S_{\gamma}$ & odd  & negative & $k+1$ & 1                      & $\frac{L-R+2}{4}$ & nonorientable   \\
$S_{\gamma}$ & even & positive & $\frac{k}{2}-\frac{b(s)}{2}+1$ & gcd(L-R,4) & $\frac{L-R}{4}$ & orientable   \\
$S_{\gamma}$ & even & negative & $\frac{k}{2}-\frac{b(s)}{2}+1$ & gcd (L-R+2,4)&  $\frac{L-R+2}{4}$ & orientable  \\
\hline
$\bar S_{\gamma}$ & odd & positive  & k  & 2 & $\frac{L-R}{4}$   & orientable  \\ 
$\bar S_{\gamma}$ & odd & negative  & k  & 2 & $\frac{L-R+2}{4}$ & orientable  \\
\hline
$S_{\gamma}^c$          & any & any & $2+k$ & 0  & none & nonorientable\\ 
\hline
$S_{\gamma}^{\partial}$ & any & any & $2+k$ & 1  &  $\frac{1}{0}$ & nonorientable \\
 \hline
$S_{\gamma}(\varepsilon_1,...,\varepsilon_k)$ & any  & positive & $k+2-b(S)$ & $gcd((\Sigma\sigma_i\varepsilon_i),2)$ 
& $\frac{\Sigma_{i=1}^k\sigma_i\varepsilon_i}{2} $ & nonorientable  \\
$S_{\gamma}(\varepsilon_1,...,\varepsilon_k)$ & any  & negative &   $k+2-b(S)$ & 
$gcd((\Sigma\sigma_i\varepsilon_i)+1,2)$ & $\frac{(\Sigma_{i=1}^k\sigma_i\varepsilon_i)+1}{2} $ & nonorientable \\
\hline
$S_{\gamma}^{sp}$       & any &  positive & $k+2-b(S)$ 
& $\frac{gcd(L_{\gamma_1'}-R_{\gamma_1'},4)}{2}$ & $\frac{L_{\gamma_1'}-R_{\gamma_1'}}{8}$ & nonorientable\\ 
with & & \\
$\phi(\gamma)=-\gamma$ & any & negative & $k+2-b(S)$  
& $\frac{gcd(L_{\gamma_1'}-R_{\gamma_1'},4)}{2}$ & $\frac{L_{\gamma_1'}-R_{\gamma_1'}+4}{8}$ & nonorientable\\
\hline
\end{tabular}  
\end{array}$$
}
\centerline{Table 2.1}

\end{proposition}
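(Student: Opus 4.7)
The plan is to verify each row of Table 2.1 by directly computing topological invariants from the explicit branched-surface descriptions given in Constructions 2.9, 2.10, and 2.12. Since every surface $S$ listed in the table is built as a collection of level circles or arcs in fibers $F_{i/k}$ connected by saddle pieces and then quotiented by $\phi$, a single fundamental period contains a determined number of saddles, and both the Euler characteristic and the boundary behavior can be read off in principle directly from the construction.

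First, for each surface I would compute $\chi(S)$ by counting saddles: each saddle contributes $-1$ to $\chi$, and the quotient by the monodromy $\phi$ does not change $\chi$ since we identify boundary circles (or boundary arcs) of the fundamental domain. So, for example, $\chi(S_\gamma^c) = -k$ (one saddle per step in $\gamma$, $k$ steps per period), while $\chi(S_\gamma^\partial)=-k-1$ because of the extra saddle introduced in Construction 2.9(b). Orientability is read off the construction: the surfaces $S_\gamma^c$, $S_\gamma^\partial$, $S_\gamma(\varepsilon_1,\ldots,\varepsilon_k)$, and $S_\gamma^{sp}$ are nonorientable by design; $S_\gamma$ is nonorientable iff its period is odd (as in Theorem~2.3); and $\bar S_\gamma$ is the orientable double cover. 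Once $b(S)$ is known separately (see next step), $g(S)$ is then recovered from $\chi = 2-g-b$ (nonorientable) or $\chi = 2-2g-b$ (orientable).

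Next, the boundary component count is obtained by tracking how boundary arcs on successive fibers are glued through the saddles and then identified under the monodromy. For $S_\gamma$, each fiber arc has two endpoints on $\partial F$, and traversing all $k$ saddles permutes these endpoints; the cycle structure of this permutation, composed with the $\phi$-action (where the trace sign controls whether $\phi$ preserves or swaps the two fixed points of its restriction to $\partial F$), determines $b(S)$, accounting for the $\gcd(L-R,4)$ and $\gcd(L-R+2,4)$ dichotomy. For $S_\gamma(\varepsilon_1,\ldots,\varepsilon_k)$, each saddle of type $\varepsilon_i=0$ versus $\varepsilon_i=1$ either does or does not interchange arc endpoints, leading after accounting for trace parity to the $\gcd((\sum \sigma_i \varepsilon_i),2)$-type formulas. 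The slope of $\partial S$ is then computed as a homology class in $H_1(\partial M_\phi)$ with respect to the meridian/longitude basis fixed just before the proposition: the longitude coefficient matches $b(S)$ (each boundary circle crosses a fiber once per period), and the meridian coefficient sums contributions $\pm 1$ from each saddle depending on whether $\gamma$ turns left or right there. The $+2$ shifts for $\mathrm{tr}\,\phi<0$ reflect the half-twist in the meridian definition case (b) above, while the factor of $4$ (resp.\ $8$) in the denominator records that one unit in the meridian basis spans four (resp.\ eight) unit endpoint shifts on $\partial F$. For $S_\gamma^{sp}$ one first passes to $M_{\phi^2}$ via the surfaces $S_{\gamma'_1}$ and $S_{\gamma'_2}$, as noted at the end of Construction 2.12, and reduces the computation to the $S_\gamma$ case already handled.

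The step I expect to be the main obstacle is the bookkeeping for the boundary count and slope formula for $S_\gamma(\varepsilon_1,\ldots,\varepsilon_k)$: the interaction between the $\varepsilon$-choices (crossing versus non-crossing saddle type) and the $\sigma$-signs (direction of turns in the diagram of $PSL(2,\Z)$) produces many subcases, and one must additionally verify invariance of the final formulas under the elementary equivalences of Definition~\ref{Definition 2.14}, so that $b(S)$, $g(S)$, and $sl(S)$ depend only on the equivalence class of the symbol $\gamma[\varepsilon_1,\ldots,\varepsilon_k]$. This verification can be carried out one elementary move at a time, using the local pictures of Figures 2.9 and 2.10 to see that the saddle-swap does not alter any of the tracked quantities.
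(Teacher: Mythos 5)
Your plan is essentially the ``standard calculations'' that the paper's proof invokes: $\chi(S)$ by counting saddles of the circle-valued Morse function, orientability from the construction, $b(S)$ from the permutation of boundary endpoints induced by the saddles composed with the return map on $\partial F$ (whose behaviour depends on the sign of $tr\,\phi$), and then $g(S)$ from $\chi$ and $b$. Where you genuinely depart from the paper is the one computation it singles out as troublesome: for $sl(S_{\gamma}(\varepsilon_{1},\ldots,\varepsilon_{k}))$ the paper does no saddle-by-saddle bookkeeping in $M_{\phi}$; it passes to the boundary of a tubular neighborhood, i.e.\ to the surface $S_{\gamma'}$ with $\gamma'$ the edge-path of Definition 2.14 in the diagram of $PSL(2,\Z)$ (and, for $S_{\gamma}^{sp}$, to a neighborhood boundary of a component of the lift to $M_{\phi^{2}}$ --- the step you do follow). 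That reduction buys two things: the slope of $S_{\gamma'}$ is already given by the Floyd--Hatcher formula $\frac{L_{\gamma'}-R_{\gamma'}}{4}$, so one only has to translate $L_{\gamma'}-R_{\gamma'}$ into $2\sum_i\sigma_i\varepsilon_i$, and invariance under the elementary equivalences of Definition 2.14 comes for free, since isotopic surfaces have isotopic neighborhood boundaries, whereas in your route this must be verified move by move, as you yourself note. Your direct route can be made to work, but correct one slip before starting the bookkeeping: you have the two coordinates interchanged. Crossing the fibers (once per pass around the base circle) produces the first (meridian) coordinate, which totals the number of endpoints of $S\cap F_{t}$ on $\partial F_{t}$ distributed over the boundary circles, not $b(S)$; the $\pm 1$ contributions of left/right turns (and the extra half-twist responsible for the $+2$, resp.\ $+4$, shifts when $tr\,\phi<0$) accumulate in the second (longitude) coordinate. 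As written, your recipe would output the reciprocals of the tabulated slopes.
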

\ \\

We are now ready to continue our proof of  Theorem \ref{Theorem 2.13}.

\begin{proof}
(a) Let $S$ be a closed, incompressible surface in $M_{\phi}.$  We may 
assume that the projection of S to $S^{1},$ the base of the bundle 
$M_{\phi} \rightarrow S^{1},$ is a Morse function (with all critical points in
	distinct fibers).  Let $F_{t}$ ($t\in S^{1}$) be a fiber transverse to S.
The circles of $S \cap F_{t}$ can be of three types:
\begin{itemize}
\item[(i)] trivial - bounding a disc in $F_{t}$, or 
\item[(ii)] peripheral - isotopic to $\partial F_{t}$, or 
\item[(iii)] essential - representing a non-zero class in $H_{1}(F_{t}).$
\end{itemize}

If an essential circle represents $\pm(p,q) \in H_{1}(F_{t})$ (with
respect to some chosen basis for $H_{1}(F_{t})),$ 
we call $\frac{q}{p} \in \Q \cup \frac{1}{0}$
the slope of the essential circle.  Since $\phi$ is hyperbolic,
no slope can be invariant under $\phi,$ and the only possibility 
of changing slope (without passing by a level without an essential circle in $F_t\cap S$) 
is using a saddle of type:\\

\centerline{\psfig{figure=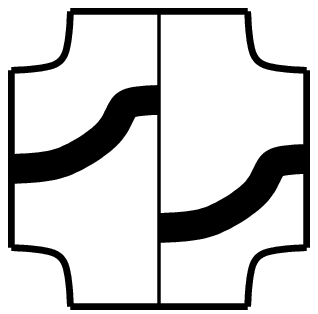,height=3.0cm}}\
\centerline{Figure 2.12;  a saddle between $\frac{q_i}{p_i}=\frac{1}{0}$ and $\frac{q_{i+1}}{p_{i+1}}=\frac{1}{2}$ }\

This saddle could occur if above and below the saddle (near the level of the saddle) there is exactly one curve of 
nontrivial slope (Figure 2.12).\\
Furthermore the slopes satisfy det $\left[\begin{array}{cc} 
q_{i} & q_{i+1} \\ 
p_{i} & p_{i+1} 
\end{array}\right]$ = $\pm2,$ where $\frac{q_{i}}{p_{i}}$ 
is the slope of S below the saddle and $\frac{q_{i+1}}{p_{i+1}}$ above it.\\
So we have two possibilities:
\begin{itemize}
	\item[1.] The slope is undefined in some level $F_{t_{0}}$; that is $F_{t_0} \cap S$ has no esential circle.\\
Now, the proof as in \cite{F-H} works even in the case of nonorientable surfaces.  
The case 1. describes tori which are
parallel to the boundary. 
\item[2.] The slope is defined on each level.\\
We call a saddle essential if the slope does in fact change 
(the only possible type of essential saddle is sketched in Figure 2.12).  
		In fibers $F_{t}$ near an essential saddle trivial 
circles can be eliminated by isotopy of $S$.
Let $F_{t_{1}},...,F_{t_{k}}$ be levels just below essential saddles, and 
$F_{t'_{1}},...,F_{t'_{k}},$ just above essential saddles.  We assume that 
there are no more saddles between $F_{t_{i}}$ and $F_{t'_{i}}.$  Let $n=\Sigma n_{i}$ 
where $n_{i}$ is the number of peripheral circles in $F_{t_{i}} \cap S.$
We prove Theorem \ref{Theorem 2.13} (a) by induction on $n$.
\begin{itemize}
\item[I.] Let $n=0.$  We present the region between $F_{t'_{i}}$ and $F_{t_{i+1}}$\\
as a cube with opposite lateral faces identified and the open\\
neighborhood of the central vertical axis deleted (Figure 2.13).
		We can assume that circles of $F_{t_i'}\cap S$ and $F_{t_{i+1}}\cap S$ have slope 
		$\frac{1}{0}$ and, because $n=0$, both are disjoint from the rectangle $R$ of  Figure 2.13 (a).
\ \\ \ \\
\centerline{\psfig{figure=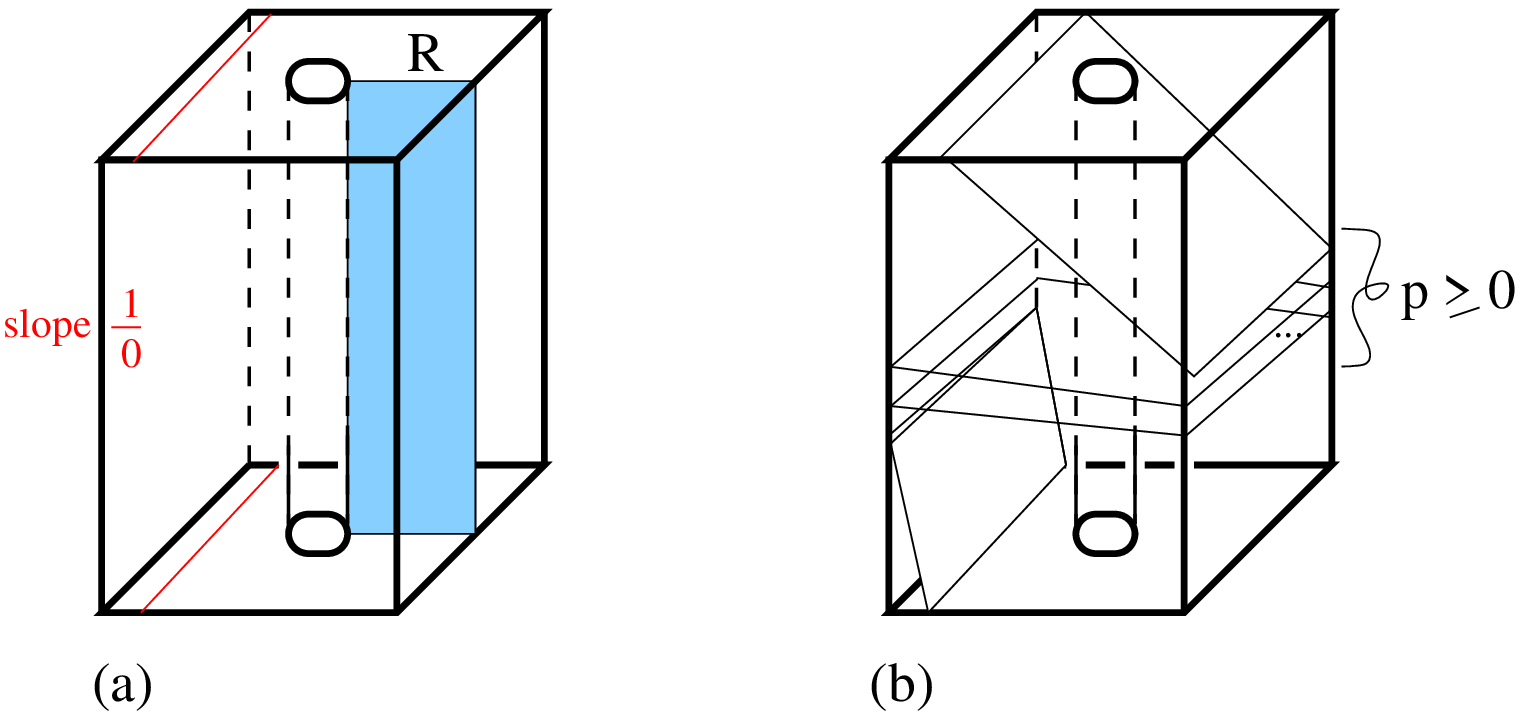,height=5.0cm}}\ \\ \
\centerline{Figure 2.13;  region between $F_{t_{i}'}$ and $F_{t_{i+1}}$}\

Trivial circles of the intersection of $S$ and a lateral face can be eliminated by 
isotopy of $S$.  An arc with endpoints on the same vertical 
edge can be eliminated too.  So, we have the situation as in Figure 2.13 (b).

Now we consider $R\cap S$.  Again, the circles of $R \cap S$ can be eliminated; and the 
arcs with the endpoints on the right edge of $R$ can be eliminated too (Figure 2.14).\\
Thus we can assume that $R \cap S=\emptyset.$\\ 
 \ \\
\centerline{\psfig{figure=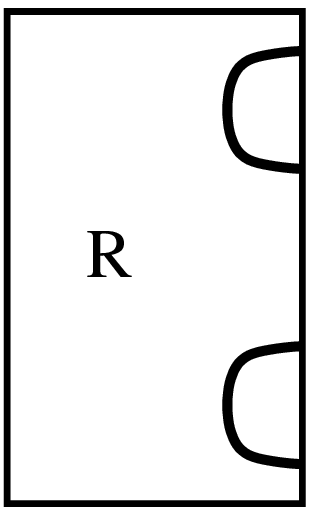,height=3.6cm}}\ \\ \
\centerline{Figure 2.14;  intersection of a surface with the rectangle $R$}\

Therefore we can conclude that no saddle occurs between $F_{t'_{i}}$\\
and $F_{t_{i+1}}.$  This ends the proof of the case when $n=0.$ 

\item[II.] Now let us assume that for each number less than $n>0$
Theorem 2.13 (a) is proven.\\
Consider an embedded surface $S$ with the number of peripheral 
circles equal to $n$.  We will isotope $S$ to decrease $n$.
Analyzing the situation, as before, we have:\\

\centerline{\psfig{figure=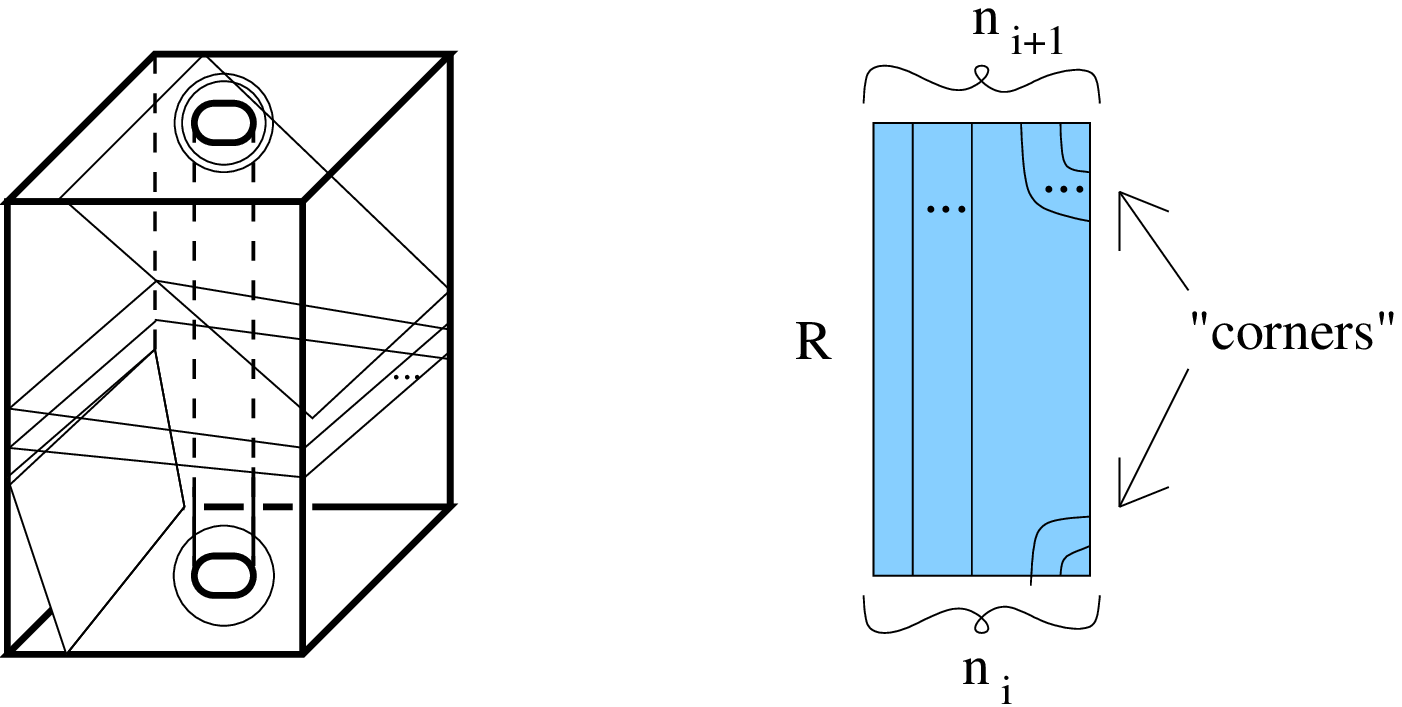,height=5.1cm}}\ \
\centerline{Figure 2.15;  $S$ with $n$ peripheral circles; $R$ with corners}

If $S$ is connected and $n \textgreater 0$, then for some $i$ the corresponding $R$ must contain a ``corner".
		The corner arc gives the following saddle, Figure 2.16(b), which
we will push in the direction of the corner.\

\centerline{\psfig{figure=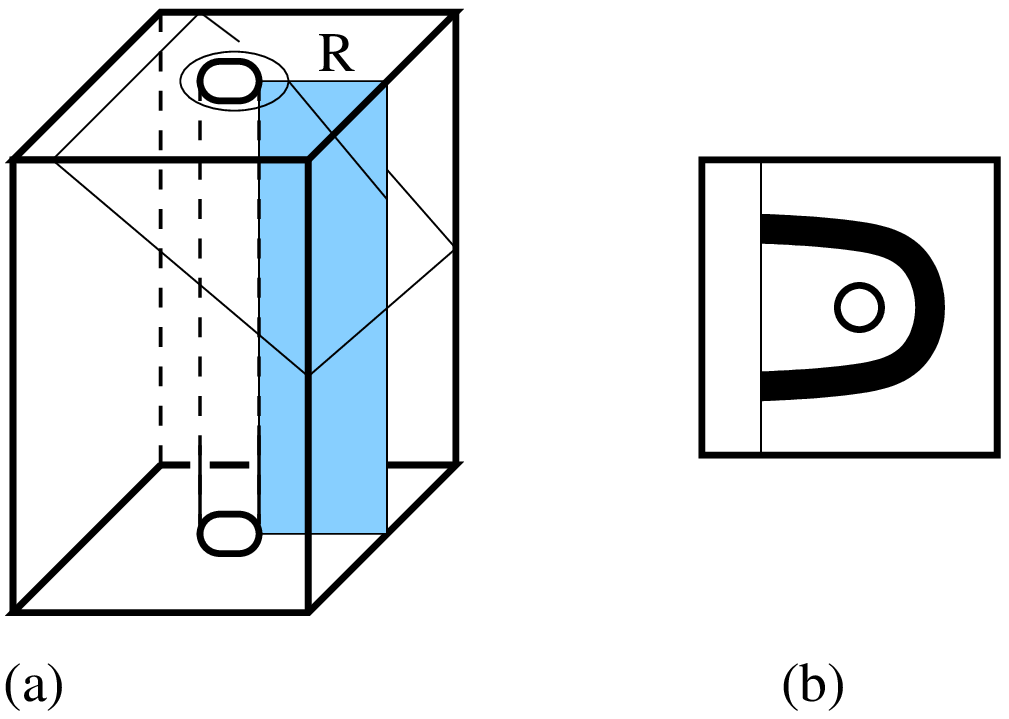,height=5.1cm}}\ 
		\centerline{Figure 2.16;  a corner in $R$ and the corresponding unessential saddle}\

Now we apply the fact that such a saddle commutes with an essential saddle (Figure 2.17) to decrease $n$. 
We decrease $n$ till all unessential saddles are in one box but then $n=0$,
otherwise $S$ is compressible or not connected. If $\gamma$ is not minimal then $S_{\gamma}^{C}$ is easily seen to be compressible. 
\end{itemize}
\end{itemize}
It ends the proof of Theorem 2.13 (a).
\end{proof}
\centerline{\psfig{figure=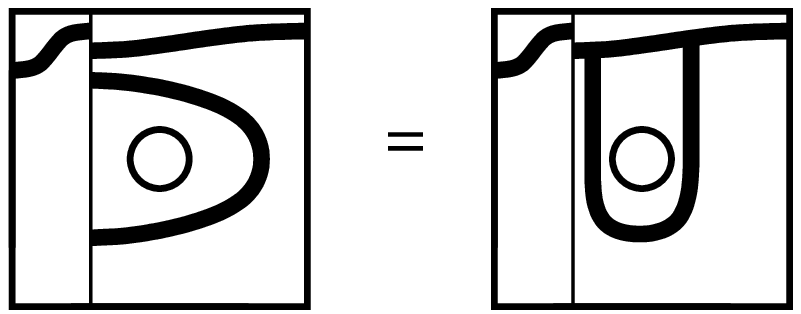,height=3.5cm}}\ \\ 
\centerline{Figure 2.17;  commuting saddles}\

Proof of Theorem 2.13 (b). \\
Let $S$ be a connected, incompressible surface in $M_{\phi}$ with $\partial S$
parallel to the boundary of a fiber.
 Considerations similar to those in 
case (a) lead us to conclusion that we can rearrange the 
saddles in such a way that all unessential saddles and boundary curves 
lie between some essential saddles, say $s_{1}$ and $s_{2}$ 
(in the case of only one essential saddle $s_{1}=s_{2}).$ 
Now if a pair of consecutive saddles looks as in Figure 2.18 (each 
of them ``adds" one horizontal boundary component):

\centerline{\psfig{figure=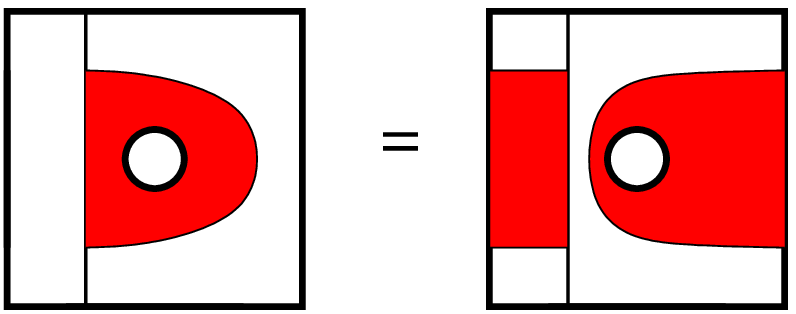,height=3.5cm}}\ \\ \
\centerline{Figure 2.18;  }\
\ \\ 
then the surface is $\partial$-compressible and therefore compressible (Proposition 2.8).  
Thus the region between essential saddles $s_{1}$ and $s_{2}$ looks as follows:\\

\centerline{\psfig{figure=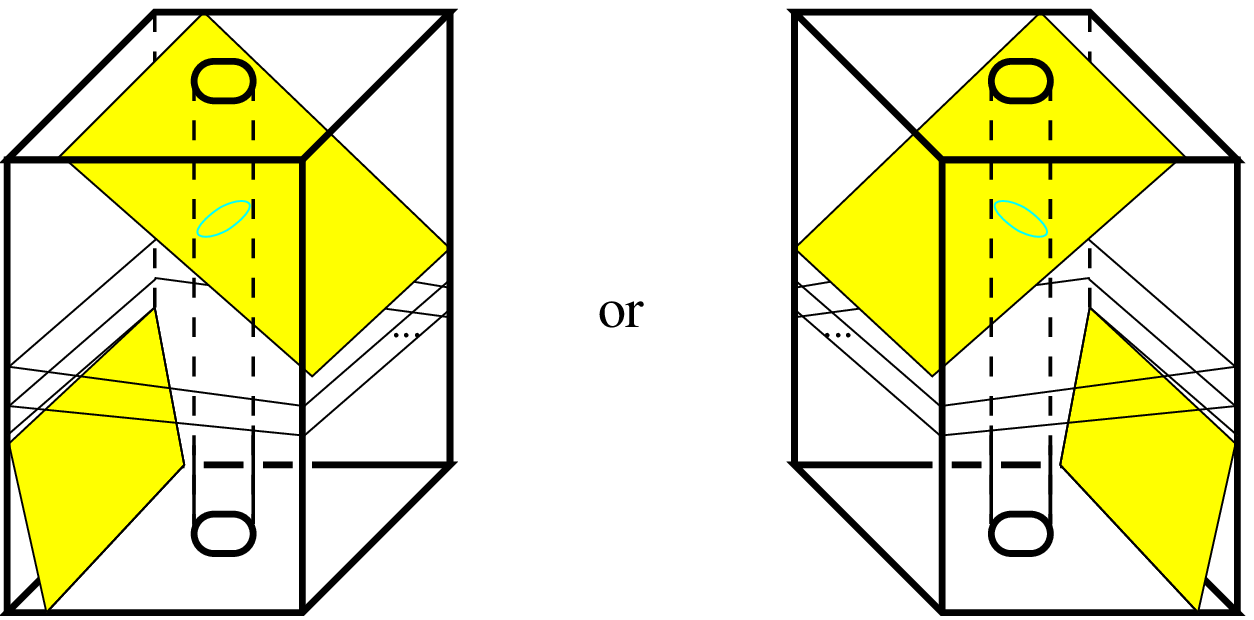,height=3.5cm}}\ \\ \
\centerline{Figure 2.19; region between essential saddles  }\

The following observations end the proof of Theorem 2.13 (b) 
(It will remain only to show that $S_{\gamma}^{\partial}$ is compressible when $\gamma$
is not minimal; but it is easy). 

\begin{observation}\label{Observation 2.18}\ 
(Changing of a direction of an unessential 
saddle).\\
Consider a part of consecutive saddles (in $\tilde{S} \subset F \times \R$):\\
First of them (on the level $i+\frac{1}{2}$) is unessential, and ``adds" a 
horizontal boundary component (Figure 2.20 (a)) and second 
(on the level $i+\frac{3}{2}$) is essential and changes the slope 
	from $\frac{1}{0}$ to $\frac{1}{2}$ (Figure 2.20 (b)).\\


\centerline{\psfig{figure=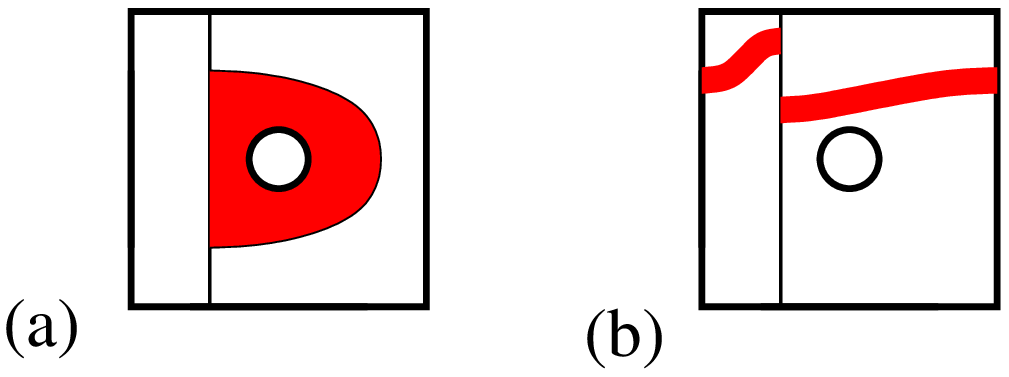,height=3.3cm}}\ \\ \
\centerline{Figure 2.20;  }\
\ \\
Then $\tilde{S}$ can be isotoped (the isotopy is the identity map outside 
the segment $F \times [i,i+2]$) in such a way that the new position 
of $\tilde{S}$ in $F \times [i,i+2]$ is defined by the saddles:\\

\centerline{\psfig{figure=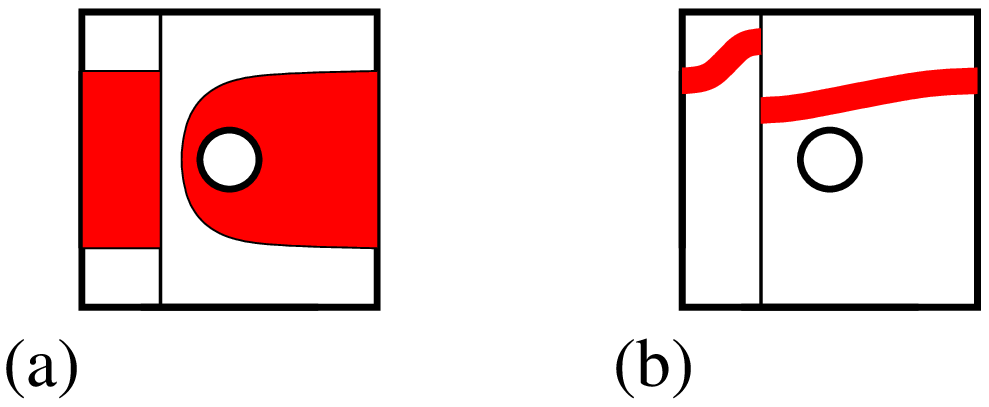,height=3.4cm}}\ \\ \
\centerline{Figure 2.21;  (a) on the level $i+\frac{1}{2}$ and (b) on the level $i+\frac{3}{2}$}\
\ \\
The proof follows from the fact that one can change the order 
of the unessential saddle (Figure 2.20 (a) or 2.21 (a)) and the  
essential saddle (Figure 2.20 (b)); as shown in Figure 2.22.
\end{observation}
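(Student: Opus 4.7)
The plan is to prove this by exhibiting an explicit level-preserving isotopy that swaps the heights of the two saddles, using the fact that their saddle bands have essentially disjoint supports on the intermediate fiber. First I would fix a thin slab $F \times [i+\tfrac{1}{2}-\epsilon, \, i+\tfrac{3}{2}+\epsilon]$ and read off the local picture of $\tilde S$ on each short interval. Immediately above the unessential saddle on level $i+\tfrac{1}{2}$, the intersection with $F_t$ consists of the existing slope-$\tfrac{1}{0}$ curve together with one additional horizontal boundary circle produced by the saddle. Immediately below the essential saddle on level $i+\tfrac{3}{2}$, the intersection is the same pair of curves. So on the entire open interval $(i+\tfrac{1}{2}, \, i+\tfrac{3}{2})$ the cross section of $\tilde S$ is a product, and the two saddles act on separate features: the unessential one operates on a bigon adjacent to $\partial F$, while the essential one operates on an annular band that is disjoint (up to small isotopy) from that bigon.

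Next I would realize the swap. Choose disjoint closed bands $U_1, U_2 \subset F$ where $U_1$ contains the support of the unessential saddle and $U_2$ contains the support of the essential saddle; disjointness is possible because the horizontal boundary component is produced near $\partial F$ while the essential saddle takes place in the interior band where the slope-$\tfrac{1}{0}$ curve is converted to slope $\tfrac{1}{2}$. Having arranged disjointness, the projection $p|_{\tilde S}$ on $F \times [i, i+2]$ is the composition of two independent Morse modifications, and a standard critical-height-swap argument (reparametrize the height function on each of $U_1 \times \R$ and $U_2 \times \R$ independently) produces an ambient isotopy of $F \times [i, i+2]$, fixing the complement of $F \times (i, i+2)$, that moves the essential saddle down to level $i+\tfrac{1}{2}$ and the unessential saddle up to level $i+\tfrac{3}{2}$, without introducing new critical points. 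Checking that the resulting configuration matches Figure 2.21 is then a direct verification: the new level $F_{i+1/2+\delta}$ carries the slope-$\tfrac{1}{2}$ curve (having crossed the essential saddle first) while the subsequent unessential saddle attaches the horizontal boundary component to this new curve, exactly as depicted.

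The main obstacle, and the one point that genuinely requires care, is the disjointness of the supports $U_1$ and $U_2$: the unessential saddle and the essential saddle both interact with the slope-$\tfrac{1}{0}$ curve on the intermediate levels, and a priori one might worry that the bigon used to spawn the horizontal boundary component overlaps the band used to perform the essential slope change. This is where Figure 2.22 does the real work, by showing an isotopy that first slides the bigon off the essential band so that the two saddles become genuinely independent. Once this geometric separation is established, commutativity of the two saddles is automatic and the isotopy in Observation \ref{Observation 2.18} follows.
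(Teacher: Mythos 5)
Your argument, even granting the disjointness claim, establishes only the commutation of the two saddles, i.e.\ that $\tilde S$ can be isotoped so that the essential saddle lies below the unessential one. That swapped picture is not what the Observation asserts. In Figure 2.21 the saddle (a) at level $i+\frac{1}{2}$ is again an \emph{unessential} saddle (the paper's proof speaks of ``the unessential saddle (Figure 2.20(a) or 2.21(a))'') and (b) at level $i+\frac{3}{2}$ is the essential one; the content of the Observation is that the \emph{direction} of the unessential saddle can be reversed while it stays below the essential saddle. Your closing verification instead places the slope-$\frac{1}{2}$ curve just above level $i+\frac{1}{2}$ with the unessential saddle on top, so you have proved a different (and weaker) statement. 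To finish you must either push the unessential saddle back down past the essential one ``the other way'' --- after the swap its band is attached to the slope-$\frac{1}{2}$ curve and can be slid back below the essential saddle with the opposite attachment, yielding Figure 2.21(a) --- or argue, as the paper does, that both the configuration of Figure 2.20 and that of Figure 2.21 are isotopic to the common intermediate configuration in which the unessential saddle sits above the essential one (this is exactly what Figure 2.22 records), hence isotopic to each other.

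Relatedly, your justification of the swap via disjoint supports $U_1,U_2$ is both shaky and, as stated, self-defeating. Both saddle bands are attached to the same slope-$\frac{1}{0}$ curve, so ``support'' cannot be separated into disjoint product pieces without further argument; and if the two modifications really were independent in a product sense, then commuting the saddles up and back down would reproduce the original surface verbatim, and no direction change could ever be extracted from such an isotopy. The genuine geometric content of the commutation is that when the unessential saddle crosses the essential one, its band (which encircles the puncture to create the peripheral circle) re-attaches to the new slope-$\frac{1}{2}$ curve; this interaction, not disjointness, is what makes the direction flip possible. So the missing ingredients are the second commutation (or the common-intermediate-configuration argument) and an honest account of how the peripheral band re-attaches when it passes the essential saddle.
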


\centerline{\psfig{figure=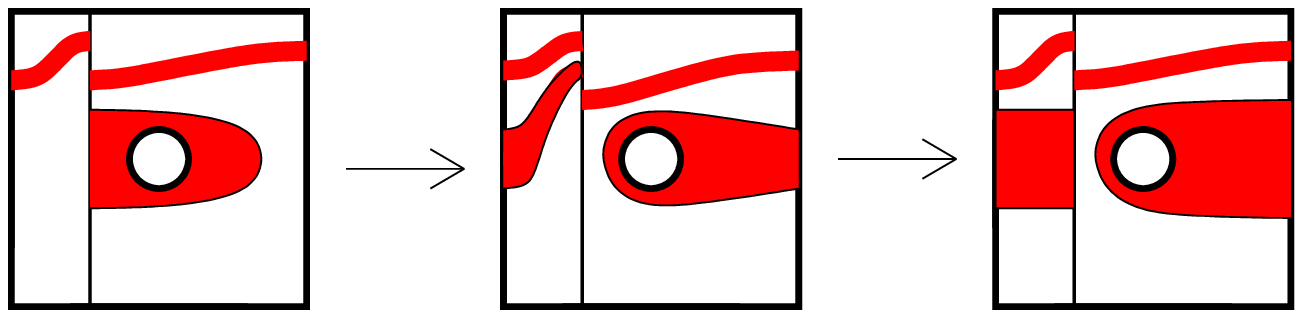,height=3.3cm}}\ 
\centerline{Figure 2.22;  }\
\ \\
\begin{observation}\label{Observation 2.19}
The surface $S_{\gamma}^{\partial}$ is $\partial$-compressible when $\gamma$ is a\\
minimal edge-path of positive period in $\W$.  $S_{\gamma}^{\partial}$ can be obtained from each surface of type 
$S_{\gamma}(\varepsilon_{1},...,\varepsilon_{k})$ with 
$b(S_{\gamma}(\varepsilon_{1},...,\varepsilon_{k}))=1$ by construction described in Proposition \ref{Proposition 2.8}. 
Namely: consider $S_{\gamma}(\varepsilon_{1},...,\varepsilon_{k}) \subset M_{\phi}$ with  $b(S_{\gamma}(\varepsilon_{1},...,\varepsilon_{k}))=1$.\\
$sl(S_{\gamma}(\varepsilon_{1},...,\varepsilon_{k}))$ is of type $\frac{2m+1}{2}$.\\
Consider $S_{0}$, a Mobius band with a hole, embedded properly in\\ 
$T^{2} \times [0,1]$ such that $S_{0} \cap T^{2} \times \{0\}=a$ curve of slope $\frac{2m+1}{2}$ and\\
$S_{0} \cap T^{2} \times \{1\}=a$ curve of slope $\frac{1}{0}$.  $S_{0}$ is incompressible (compare Theorem \ref{Theorem 2.3}).  
Now we glue $(M_{\phi},S_{\gamma}(\varepsilon_{1},...,\varepsilon_{k}))$ with
$(T^{2} \times [0,1], S_{0})$ along $(\partial M_{\phi}, \partial S_{\gamma}(\varepsilon_{1},...,\varepsilon_{k}))$ and 
$(T^{2} \times \{0\}, T^{2} \times \{0\} \cap S_{0}$).  The new manifold with $S_{\gamma}(\varepsilon_{1},...,\varepsilon_{k}) \cup S_{0}$
embedded is homeomorphic to $(M_{\phi},S_{\gamma}^{\partial})$ (see Figure 2.23).
\end{observation}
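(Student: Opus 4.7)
The strategy is to prove both claims simultaneously by realizing $S_\gamma^\partial$ as the gluing described in the statement; this makes the $\partial$-compressing disk visible as an inherited feature of the Möbius band with a hole in $T^2\times[0,1]$.

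First, given a surface $S := S_\gamma(\varepsilon_1,\ldots,\varepsilon_k)$ with $b(S)=1$, I would read off its boundary slope from Table 2.1 in Proposition \ref{Proposition 2.17}. The hypothesis $b(S)=1$ forces the slope of $\partial S$ to have the form $\frac{2m+1}{2}$ for some integer $m$, since $b(S)=\gcd(\sum\sigma_i\varepsilon_i,2)$ (or the analogous expression for negative trace) equals $1$ precisely when the numerator is odd. In $T^2\times[0,1]$ I would then use Theorem \ref{Theorem 2.3}(c) to construct an incompressible nonorientable surface $S_0$ whose two boundary circles carry slopes $\frac{2m+1}{2}$ and $\frac{1}{0}$: since $\det\left[\begin{array}{cc}2m+1 & 1\\ 2 & 0\end{array}\right]=-2$, the minimal edge-path between these slopes in $\W$ has period $1$, and so $S_0$ is a Möbius band with a hole (genus $1$, two boundary components).

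Next I would glue $(M_\phi, S)$ to $(T^2\times[0,1], S_0)$ along $\partial M_\phi \cong T^2\times\{0\}$, matching $\partial S$ with the slope-$\frac{2m+1}{2}$ boundary of $S_0$ (cf.\ Figure 2.23). Attaching a collar to a torus boundary leaves the ambient $3$-manifold homeomorphic to $M_\phi$, and the glued surface $S':=S\cup S_0$ has a single boundary component of slope $\frac{1}{0}$. To identify $S'$ with $S_\gamma^\partial$ up to ambient isotopy I would compare invariants: both are indexed by the same $\phi$-invariant path $\gamma$, both have boundary slope $\frac{1}{0}$ and one boundary component, and their genera agree since the genus of $S$ is $k+1$ (from Table 2.1) while adding the genus-$1$ nonorientable summand $S_0$ along a circle yields $k+2$, matching the $S_\gamma^\partial$ entry of the table. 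The uniqueness clause of Proposition \ref{Proposition 2.16} then supplies the isotopy; equivalently, one matches the constructions directly, recognizing the extra saddle and horizontal boundary circle added in Figure 2.5 as precisely the Möbius-with-hole ``cap'' pushed into $M_\phi$.

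Once the gluing description is established, $\partial$-compressibility of $S_\gamma^\partial$ is immediate. The Möbius band with a hole $S_0$ has a $\partial$-compressing disk $D$ along its slope-$\frac{1}{0}$ boundary: $D$ is a rectangle whose core arc crosses the core circle of the Möbius band, so $D\cap S_0$ is an arc in $S_0$ that is not boundary-parallel (an avatar of the statement in Corollary \ref{Corollary 2.5} transferred from the solid torus to $T^2\times I$). Transporting $D$ through the identification above yields a $\partial$-compressing disk for $S_\gamma^\partial$ inside $M_\phi$. The main obstacle I expect is the isotopy identification in the previous paragraph: matching numerical invariants via Table 2.1 is routine, but invoking Proposition \ref{Proposition 2.16} requires that $S'$ first be placed in Morse position with respect to the fibration after the gluing and recognized as carried by the branched surface $\tilde\Sigma(\gamma)$ augmented with the extra saddle of Figure 2.5, which is essentially a local check that Construction \ref{Construction 2.9}(b) and the Möbius-band gluing differ only by an ambient isotopy supported near $\partial M_\phi$.
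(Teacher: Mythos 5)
Your overall route is the paper's own: Observation 2.19 is proved there by exactly this gluing --- observe that $sl(S_{\gamma}(\varepsilon_{1},\ldots,\varepsilon_{k}))=\frac{2m+1}{2}$ when $b=1$, insert the M\"obius band with a hole $S_{0}$ spanning $\frac{2m+1}{2}$ and $\frac{1}{0}$ in a collar $T^{2}\times[0,1]$ (Theorem \ref{Theorem 2.3}(c), determinant $-2$, period $1$), identify the union with $S_{\gamma}^{\partial}$ (the paper's justification of this identification is essentially pictorial, Figure 2.23), and read off $\partial$-compressibility from the collar piece. Your slope, determinant and genus bookkeeping is correct, and your transport of the $\partial$-compressing disk of $S_{0}$ into $M_{\phi}$, with the check that its arc stays non-boundary-parallel in the glued surface, is a legitimate way to make the first claim explicit.

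The gap is in the identification step. The ``uniqueness clause of Proposition \ref{Proposition 2.16}'' cannot be invoked by matching $\gamma$, genus, boundary slope and $b(S)$: that proposition compares surfaces already realized as members of the constructed families, and numerical invariants do not determine the isotopy class (Example \ref{Example 2.22}(1)(i) exhibits non-isotopic surfaces $S_{\gamma_{1}}(\varepsilon_{1},\ldots,\varepsilon_{k})$ with identical invariants). You half-acknowledge this, but your fallback --- that Construction \ref{Construction 2.9}(b) and the M\"obius-band gluing ``differ only by an ambient isotopy supported near $\partial M_{\phi}$'' --- understates what must be shown. The Observation asserts that \emph{every} $(\varepsilon_{1},\ldots,\varepsilon_{k})$ with $b=1$ yields the same $S_{\gamma}^{\partial}$ after gluing; two such surfaces $S_{\gamma}(\varepsilon)$, $S_{\gamma}(\varepsilon')$ differ by saddle types distributed along the whole bundle, not in a neighborhood of $\partial M_{\phi}$, and moreover $S_{\gamma}^{\partial}$ is built from circle cross-sections (via $S_{\gamma}^{c}$) while $S_{\gamma}(\varepsilon)$ has arc cross-sections. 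The mechanism that washes out the dependence on the $\varepsilon_{i}$ is the new horizontal boundary component together with its unessential saddle: Observation \ref{Observation 2.18} (and the saddle interchanges of Example \ref{Example 2.15}) lets you slide that saddle past the essential saddles and reverse their directions, normalizing the glued surface to the standard form of Construction \ref{Construction 2.9}(b) independently of $(\varepsilon_{1},\ldots,\varepsilon_{k})$. Without this (or an equivalent global rearrangement argument), the identification $S_{\gamma}(\varepsilon_{1},\ldots,\varepsilon_{k})\cup S_{0}\cong S_{\gamma}^{\partial}$ --- and hence the full statement for arbitrary $(\varepsilon_{1},\ldots,\varepsilon_{k})$ --- is not established.
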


\centerline{\psfig{figure=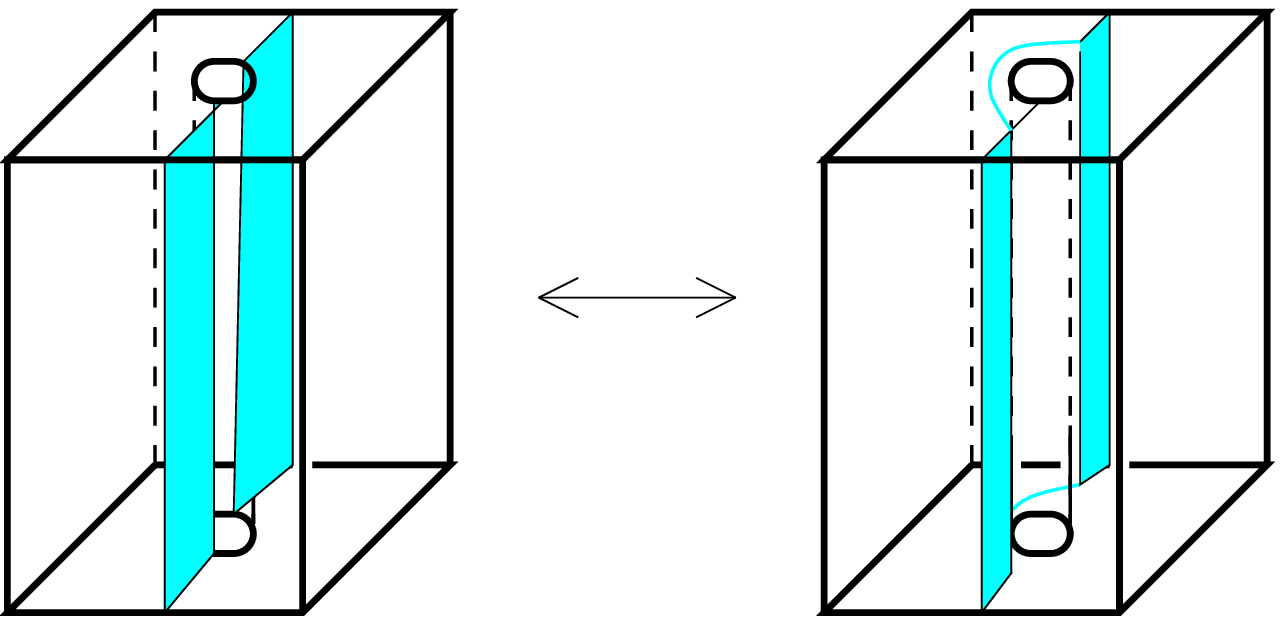,height=5.5cm}}\ \\ \
\centerline{Figure 2.23;  }\

Proof of Theorem \ref{Theorem 2.13} (c).  Let $S \subset M_{\phi}$ be a compact, incompressible, 
$\partial$-incompressible surface with $\partial S \neq \emptyset$, the circles of $\partial$S 
not being isotopic to fibers in $\partial M_{\phi}$.  Then $S$ can be isotoped 
so that $\partial$S is transverse to the fibers in $\partial M_{\phi}$ and the bundle 
projection is a Morse function on $S$.  In a non-critical fiber 
$F_{t}$, the arcs of $S \cap F_{t}$ must all be non-trivial in $H_{1}(F_{t},\partial F_{t})$, 
since a homologically trivial arc would bound a disk on $F_{t}$ 
(so $F$ would be $\partial$-compressible).
\ \\ \ \\
\centerline{\psfig{figure=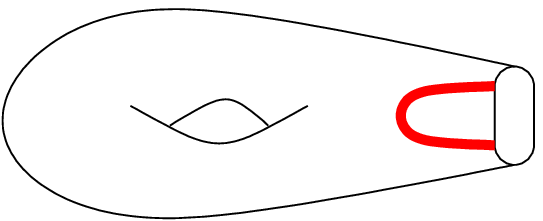,height=3.0cm}}\ \\ \
\centerline{Figure 2.24;  an arc parallel to the boundary of a fiber} \

The same number of arcs with a defined slope is on each \
non-critical level.\\
As before (\cite{F-H}) it can not happen that $S \cap F_{t}$ has three-slope configuration.  So the only possible changes of slope are 
of type (for better visualization we draw two models of a punctured-torus):\\

\centerline{\psfig{figure=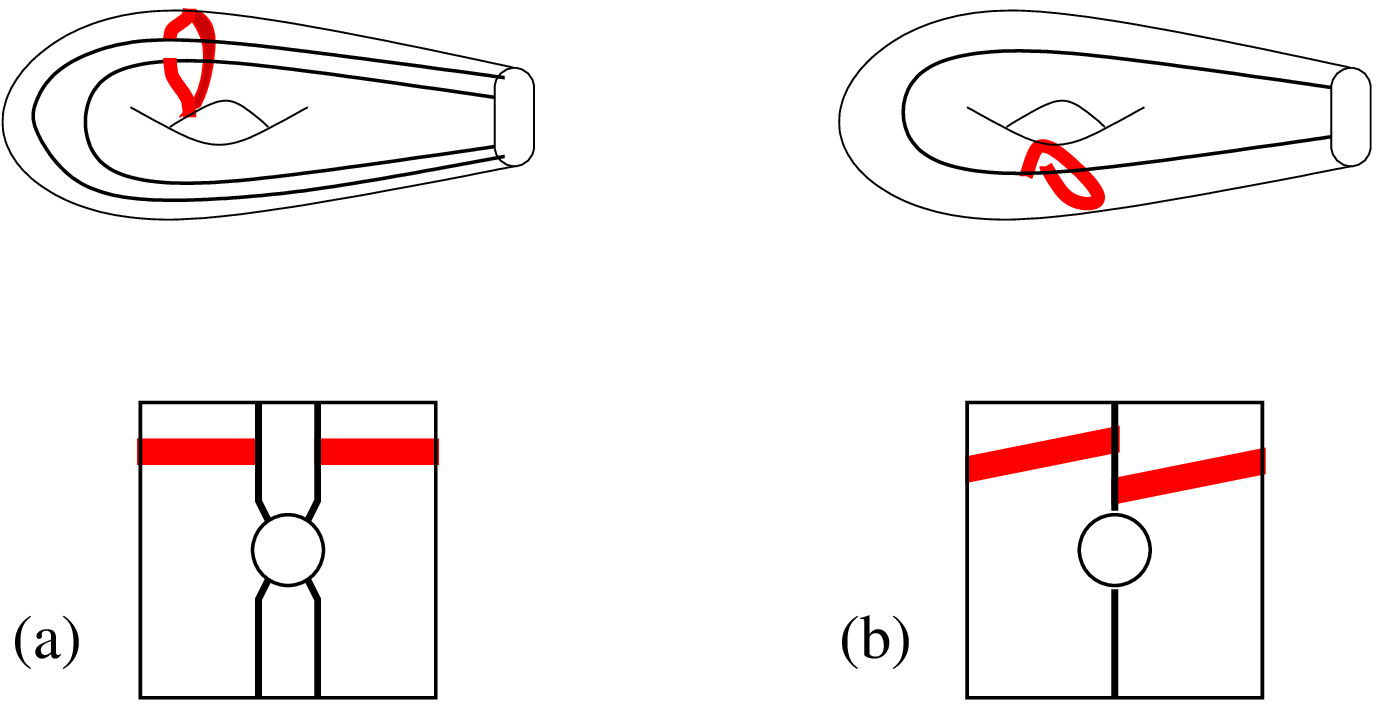,height=6.5cm}}\ \\ \
\centerline{Figure 2.25;  saddle (a) called an $e$-essential and (b) called $o$-essential saddle}\ \\

If the number of curves (on each non-critical level and of
each slope) is even then an o-essential saddle can not 
occur and we deal with the case (c)(i) of Theorem 2.13 
studied in \cite{F-H} (we allow $S$ to be nonorientable).
Let the number of the curves of some slope be odd. 
Because $\phi$ is hyperbolic, the o-essential saddle has to 
occur.  We can eliminate circles (the only possible ones 
are trivial) near o-essential saddles by isotopy of $S$. 
Let $F_{t_{1}},...,F_{t_{k}}$ be levels just below $o$-essential saddles 
and $F_{t'_{1}},...,F_{t'_{k}}$, just above the o-essential saddles.  There are no
more saddles between $F_{t_{i}}$ and $F_{t'_{i}}$ ($i=1,2,..,k$).  $F_{t_{i}} \cap S$ 
(resp $F_{t'_{i}} \cap S$) consists of one arc, say $\gamma_{i}^{-}$ (resp. $\gamma_{i}^{+}$) of slope  
$\frac{a_{i}}{b_{i}}$ (resp. $\frac{a_{i+1}}{b_{i+1}}$), which plays a role in the $i$th saddle
and $k_{i}$ ($k_{i} \geq 0$) arcs of slope $\frac{a'_{i}}{b'_{i}}$ such that $\mid det \left[\begin{array}{cc} 
a_{i} & a'_{i} \\ 
b_{i} & b'_{i} 
\end{array}\right] \mid =$ 
$ \mid det \left[\begin{array}{cc} 
a_{i+1} & a'_{i} \\ 
b_{i+1} & b'_{i} 
\end{array}\right] \mid =1$.  ($\frac{a'_{i}}{b'_{i}}$ is not uniquely determined by $\frac{a_{i}}{b_{i}}$ and 
$\frac{a_{i+1}}{b_{i+1}}$; in fact $\frac{a'_{i}}{b'_{i}}=\frac{\frac{1}{2}(a_{i+1} \pm a_{i})}{\frac{1}{2}(b_{i+1} \pm b_{i})}$). 
Consider the region between $F_{t'_{i}}$ and $F_{t_{i+1}}$ (decomposed as 
a cube as in the proof of Theorem \ref{Theorem 2.13} (a) or (b)).  After an 
appropriate choice of coordinates we can assume that $\frac{a_{i+1}}{b_{i+1}}=\frac{1}{0}$, 
$\frac{a'_{i}}{b'_{i}}=\frac{0}{1}$, and that $\frac{a'_{i+1}}{b'_{i+1}}=\frac{p}{1}$ (for some $p \geq 0$).  See Figure 2.26.\\

\centerline{\psfig{figure=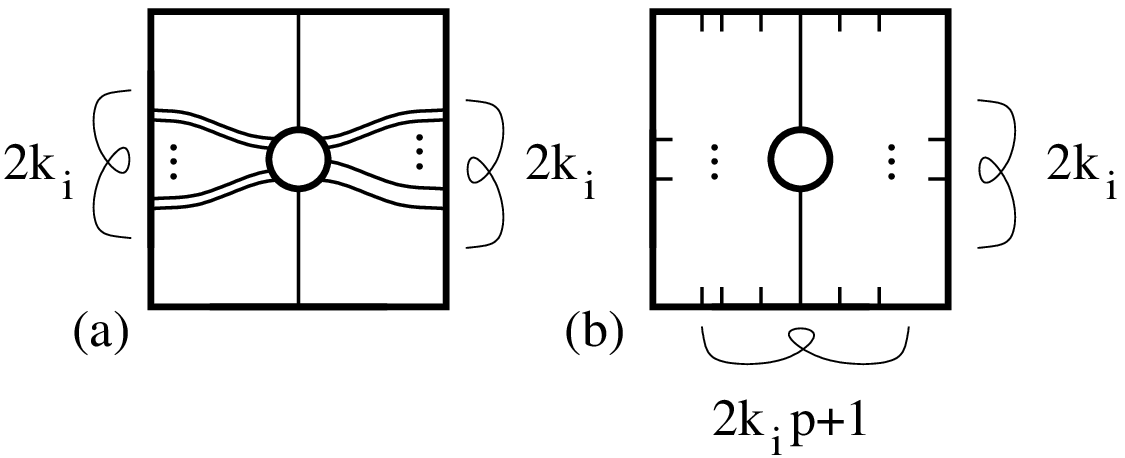,height=5.5cm}}\ \\ \
\centerline{Figure 2.26; (a) $F_{t'_{i}}$ - the bottom face of the cube; (b) $F_{t_{i+1}}$ - the top face of the cube }\

Now we use the following observation.\\
If $\gamma,\gamma_{1},...,\gamma_{2k}$ $(k \geq 0)$ are (all) curves of a given slope in 
$S \cap F_{t}$ (for some noncritical level $t$) and $\gamma$ lies in the middle 
of the curves (Figure 2.27), then if $S_{a}$ is the nearest essential saddle to level $F_{t}$ (from         
above or below) then either $\gamma$ is not changed in $S_{a}$ (it is always
the case if $k \textgreater 0$) or $S_{a}$ is o-essential.  Furthermore the
condition that $\gamma$ is in the middle is preserved. From this                                 
observation and Proposition 2.1 of \cite{F-H} it follows that, after  
isotopy, one can assume that in the region between $F_{t'_{i}}$ and
$F_{t_{i+1}}$ 

\centerline{\psfig{figure=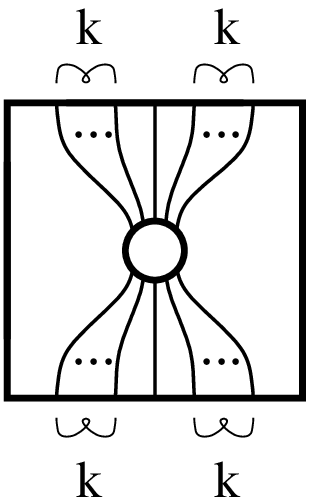,height=6.1cm}}\ \\ \
\centerline{Figure 2.27;  }\
\ \\
the curve $\gamma_{i}^{+}$ (and $\gamma_{i+1}^{-}$) is not ``involved" in any  
saddles.  Therefore each connected component of the lifting  
of S to $\tilde{S}$ in $F \times \R$ (which contains an o-saddle) is of the  
forms $\tilde{S}_{\gamma}(\varepsilon_{1},...,\varepsilon_{k})$.  Now we have two possibilities for  
connected S:
\begin{enumerate}
\item[(i)] on some (so each) non-critical level $t$, $F_{t} \cap S$ consists  
of one curve.  Then we deal with the case (ii) of Theorem \ref{Theorem 2.13}(c),
\item[(ii)] on some (so each) non-critical level t, $F_{t} \cap S$ consists  
of two curves.  Then two components of S are interchanged  
by $\phi$ and we deal with the case (iii) of Theorem \ref{Theorem 2.13}(c).  
\end{enumerate}
The proof of Theorem \ref{Theorem 2.13}(c) will be completed if we show that  
for a non-minimal $\gamma$, S is not incompressible, $\partial$-incompressible.

Consider the case of $S_{\gamma}(\varepsilon_{1},...,\varepsilon_{k})$ with $\gamma$ determined by vertices 
$...\frac{a_{-1}}{b_{-1}},\frac{a_{0}}{b_{0}},\frac{a_{1}}{b_{1}},...$ .  There are just two possibilities for 
successive o-saddles yielding $\frac{a_{i}}{b_{i}}=\frac{a_{i+2}}{b_{i+2}}$, up to a change of coordinates:\\

\centerline{\psfig{figure=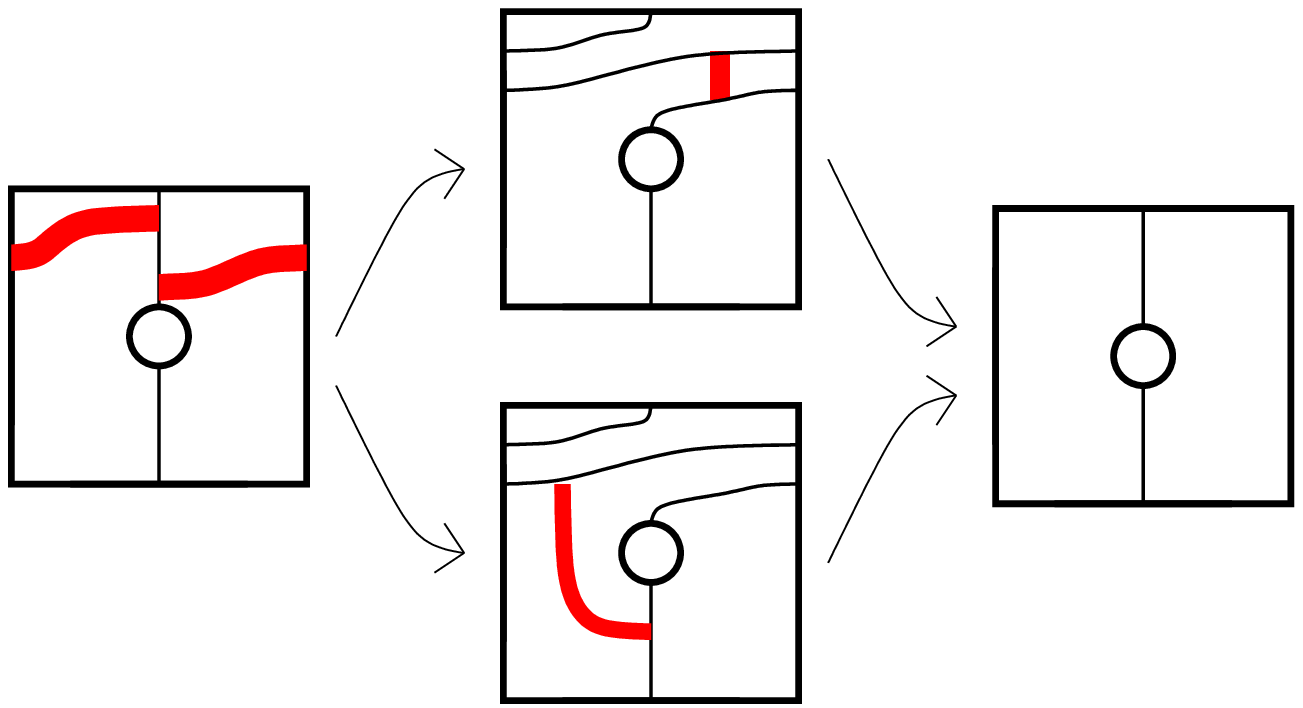,height=5.0cm}}\ \\ \
\centerline{Figure 2.28; $\gamma$ moving for and back }\

In the first of these two sequences, $S$ is clearly compressible.
In the second, $S$ is $\partial$-compressible.  This can be seen after one  
puts both saddles on the same level (Figure 2.29).\\

\centerline{\psfig{figure=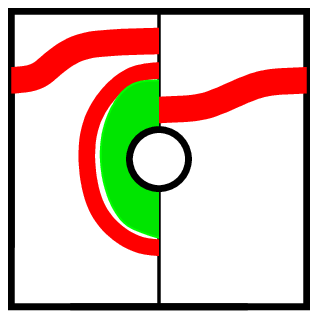,height=3.0cm}}\ \\ \
\centerline{Figure 2.29; $\partial$-compressing disk  }\

Consider the case of $S_{\gamma}^{sp}$.  If $\gamma$ is not minimal then $\gamma_{1}$ or 
$\gamma_{2}$ is not minimal.  Then we can find consecutive vertices in  
$\gamma$ such that $(\frac{a_{i}}{b_{i}},\frac{c_{i}}{d_{i}})=(\frac{a_{i+2}}{b_{i+2}},\frac{c_{i+2}}{d_{i+2}})$ (assume for simplicity that 
the first slope is going for and back so the second is unchanged $\frac{c_{i}}{d_{i}}=\frac{c_{i+1}}{d_{i+1}}=\frac{c_{i+2}}{d_{i+2}}$).   
Because $\phi$ is hyperbolic so $\gamma$ has the period at least $2$. Therefore \\

 the surface corresponding to $(\frac{a_{i}}{b_{i}},\frac{c_{i}}{d_{i}}),(\frac{a_{i+1}}{b_{i+1}},\frac{c_{i+1}}{d_{i+1}}), 
(\frac{a_{i+2}}{b_{i+2}},\frac{c_{i+2}}{d_{i+2}})$ allows the obvious 
compressing disk which is a compressing disk of $S_{\gamma}^{sp}$ (compare 
the upper part of Figure 2.28).\\
This ends the proof of Theorem \ref{Theorem 2.13}; it still remains to prove Proposition \ref{Proposition 2.16}.
\begin{remark}\label{Remark 2.20}
If we allow S to be disconnected then we can get 
more incompressible, $\partial$-incompressible surfaces, for example 
surfaces $S_{\gamma}^{sp}$ in $M_{\phi}$ with $\phi(\gamma)=\gamma$ (compare Example \ref{Example 2.22}). 
\end{remark}
Proof of Proposition \ref{Proposition 2.16}:
\begin{enumerate}
\item[(a)] A case of a closed surface.\\
Consider the following properties of a closed, connected 
surface $S_{\gamma}^{c}$ in $M_{\phi}$:
\begin{enumerate}
\item[(i)] Each circle of the intersection of $S_{\gamma}^{c}$ with a non-critical fiber $F_{t}$ which is trivial 
in $F_{t}$ bounds a disk in $S_{\gamma}^{c}$. 
\item[(ii)] $S_{\gamma}^{c} \cap F_{t}$, where $F_{t}$ is a non-critical fiber does not contain circles parallel to $\partial F_{t}$. 
\item[(iii)] On each non-critical level $F_{t}$ there is exactly one 
slope (and it is represented by an odd number of circles) 
and the sequence of these slopes in $S_{\gamma}^{c}$ traces out the
vertex sequence of the given minimal, invariant edge-path $\gamma \subset \W$. 
\end{enumerate}


We claim that properties (i)-(iii) are preserved by any isotopy of $S_{\gamma}^{c}$.  
If this is so then, the proposition follows (compare \cite{F-H}).  
For suppose $S_{\gamma}^{c}$ was compressible.  Let $D$ be a compressing disk  
$D \cap S_{\gamma}^{c}= \partial D$.  A small sub-disk $D'$ can be isotoped to lie in a fiber  
$F_{t}$.  The shrinking of $D$ to $D'$ extends to an isotopy of $S_{\gamma}^{c}$ to  
$S_{\gamma}^{'c}$.  Condition (i) implies that $\partial D'$ bounds a disk in $S_{\gamma}^{'c}$, so 
$\partial$D bounds a disk in $S_{\gamma}^{c}$.\\
To prove the claim, consider a generic isotopy of $S_{\gamma}^{c}$.  At any  
time during this isotopy, the projection to $S^{1}$ will be a Morse 
function, except for the following isolated phenomena:
\begin{enumerate}
\item[(A)] a saddle and a local maximum (or minimum) are introduced 
or canceled in a region containing no other critical points,
\item[(B)] a pair of critical points interchange levels.
\begin{enumerate}
\item[(A)] cannot affect conditions (i)-(iii), 
\item[(B)] cannot affect conditions (i)-(iii) when one or both 
of the nondegenerate critical points are of index 0 or 2. 
\end{enumerate}
\end{enumerate}
Thus the only case left to check is when two saddles interchange heights.  
Each of the two saddles has one of the forms:\\

\centerline{\psfig{figure=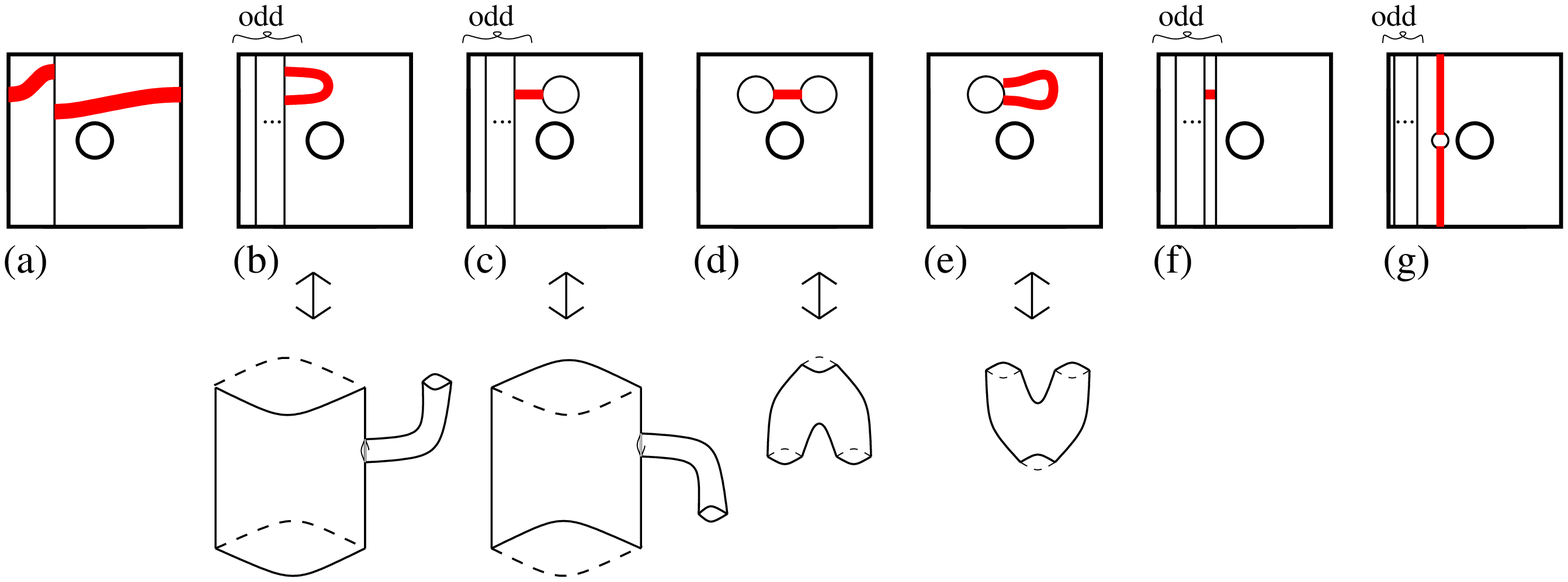,height=6.0cm}}\ \\ \
\centerline{Figure 2.30;  saddles which can interchange heights}\


If both saddles are of type (a), up to the level preserving isotopy and change of coordinates, then we have the following two 
possibilities:\\

\centerline{\psfig{figure=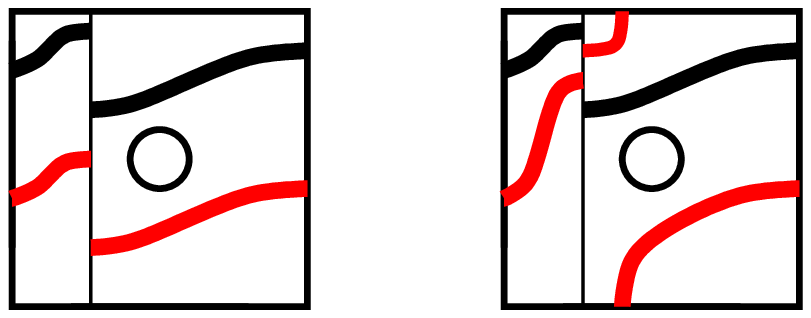,height=3.0cm}}\ \\ \
\centerline{Figure 2.31;  two cases when saddles of type (a) commute}\

Interchanging heights of the saddles in these cases preserves (i)-(iii).  
In all other cases one can check that the conditions (i)-(ii) are preserved 
(to prove that condition (ii) is preserved it is useful to consider the fundamental 
group of a part of $S_{\gamma}^{c}$ consisting of a segment containing both saddles).\\
For (iii), if neither saddle before the interchange was essential then the same is true after the interchange, 
so the edge-path is preserved (however a pair of circles with defined slope 
can be created or canceled; see Figure 2.32).\\

\centerline{\psfig{figure=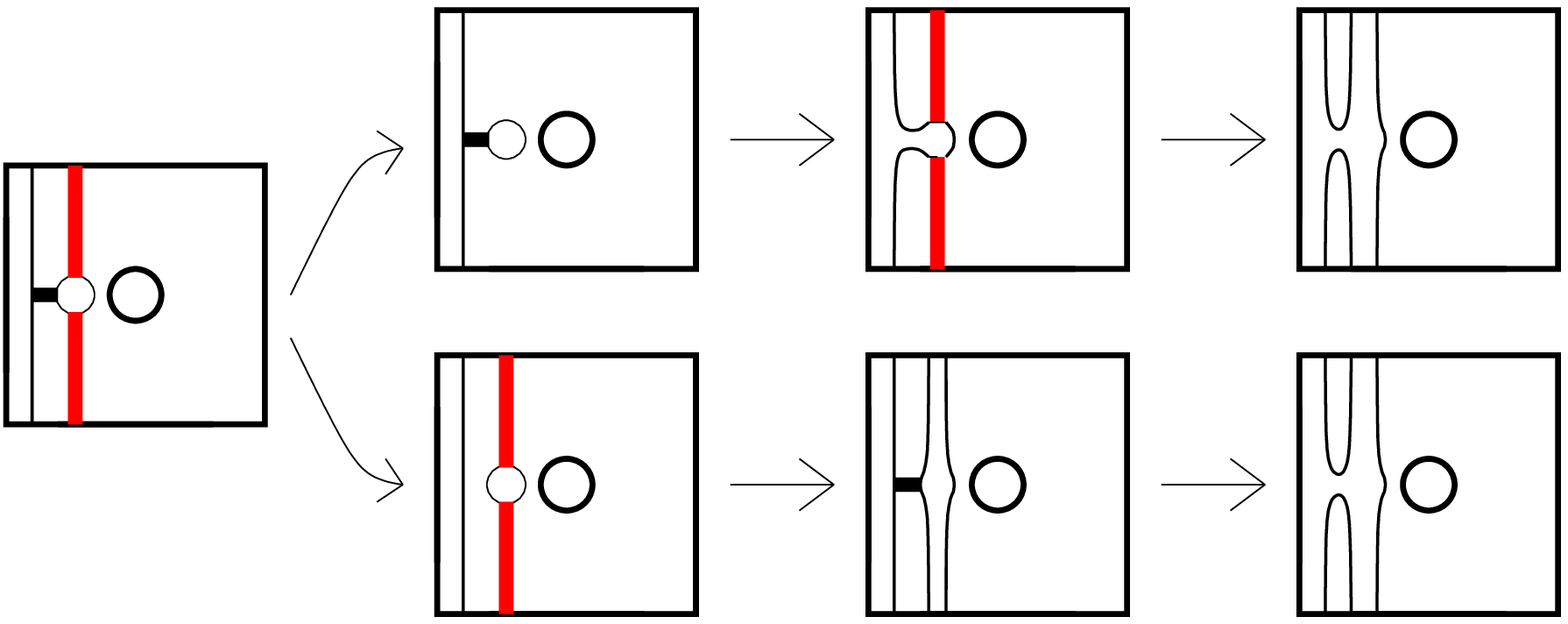,height=5.0cm}}\ \\ \
\centerline{Figure 2.32;  }\

If just one saddle before the interchange is essential it holds
after the interchange also (it follows from the fact that $\gamma$
lies on the tree; see Fact 2.2).  Thus the edge-path is 
preserved and hence the condition (iii).  It ends the part (a)  
of the proof of Proposition 2.16.
\item[(b)] The case of one horizontal boundary component. 
Consider the following properties of the connected surface $S$ 
with exactly one, horizontal boundary component:
\begin{enumerate}
\item[(i)] Each circle of the intersection of $S$ with a non-critical 
fiber $F_{t}$ which is trivial in $F_{t}$ bounds a disk in $S$,
\item[(ii)] Each circle of the intersection of $S$ with a non-critical 
fiber $F_{t}$ which is parallel to $\partial F_{t}$ in $F_{t}$ is parallel to $\partial S$ 
in $S$,
\item[(iii)] On each non-critical level $F_{t}$ there is exactly one 
slope and it is represented by an odd number of circles. 
The sequence of these slopes (in $S$) traces out the vertex sequence 
of the given minimal edge-path $\gamma \subset \W$.
\end{enumerate}
We can prove, similarly as in the previous case, that 
properties (i)-(iii) are preserved by any isotopy of $S$ (rel $\partial$S).  Thus the
part (b) of the proof of Proposition 2.16 is completed. \ \ \ \ \ \ $\square$
\end{enumerate}
\begin{remark}\label{Remark 2.21}
(a) and (b) can be derived from the fact that the 
incompressible surfaces in $T^{2}$ bundle over $S^{1}$ with a hyperbolic monodromy map are 
classified up to isotopy by invariant, minimal edge-paths in $\W$ (and element of $\Z_{2}$); see 
Theorem \ref{Theorem 2.3} and Remark \ref{Remark 2.4}.
\end{remark}


\begin{enumerate}
\item[(c)] The case of vertical boundary components.\\
Consider the following properties of a connected surface, $S$, with vertical boundary components:
\begin{enumerate}
\item[(i)] $S$ has no trivial arcs on any non-critical level,
\item[(ii)] Each non-critical level circle of $S$ bounds a disk in $S$,
\item[(iii)] On each non critical level there is exactly one arc of
a defined slope, and the slope sequence of $S$ traces out 
the vertex sequence of a given minimal edge-path $\gamma \subset \W$.
\end{enumerate}
We claim that properties (i)-(iii) are preserved by any isotopy of $S$
(rel $\partial S$) and that the only operation (under generic isotopy)
which could reverse the type of an essential saddle is interchanging the relative heights of this saddle and another 
saddle and we are in the same situation as Example \ref{Example 2.15}.  
The considerations are similar to those of part (a) and (b) and \cite{H-T,F-H}, and we omit them.\\
From the considerations above it follows that $S_{\gamma}(\varepsilon_{1},...,\varepsilon_{k})$ 
is incompressible and now we will prove $\partial$-incompressibility. 
If $b(S_{\gamma}(\varepsilon_{1},...,\varepsilon_{k}))=2$ then $\partial$-incompressibility follows from 
condition (iii) and Proposition \ref{Proposition 2.8}.  \\
If $b(S_{\gamma}(\varepsilon_{1},...,\varepsilon_{k}))=1$,
we consider the lifting of $S_{\gamma}(\varepsilon_{1},...,\varepsilon_{k})$ to $M_{\phi^{2}}$ and the lifted 
manifold has two boundary components and it is incompressible, so $\partial$-incompressible, 
so $S_{\gamma}(\varepsilon_{1},...,\varepsilon_{k})$ is $\partial$-incompressible.\\
To prove that $S_{\gamma}^{sp}$ is incompressible, $\partial$-incompressible we  
consider the lifting of this surface to $M_{\phi^{2}}$.  Some 
connected component of the lifted surface is of type 
$S_{\gamma}(\varepsilon_{1},...,\varepsilon_{k})$ where $\gamma_{1}$ is the $\phi^{2}$-invariant, minimal edge-path 
in $\W$ associated to the special edge-path $\gamma$ (see Definition 2.11).  Because $S_{\gamma}(\varepsilon_{1},...,\varepsilon_{k})$ is incompressible, 
$\partial$-incompressible so $S_{\gamma}^{sp}$ is incompressible, $\partial$-incompressible.
This ends the proof of Proposition \ref{Proposition 2.16}. \ \ \ \ \ \ \ \ \ \ $\square$
\end{enumerate}


\begin{proof} (Proof of Proposition \ref{Proposition 2.17}) \\
  The calculations are standard.  The only one which is more troublesome is the computation of 
$sl(S_{\gamma}(\varepsilon_{1},...,\varepsilon_{k}))$ and $sl(S_{\gamma}^{sp})$.  The computation of 
$sl(S_{\gamma}(\varepsilon_{1},...,\varepsilon_{k}))$ reduces to the computation of $sl(S_{\gamma'})$ 
where $S_{\gamma'}$ is the boundary of a tubular neighborhood of 
$S_{\gamma}(\varepsilon_{1},...,\varepsilon_{k})$; see Definition \ref{Definition 2.14}.  The computation of 
$sl(S_{\gamma}^{sp})$ reduces to the computation of $sl(S_{\gamma'})$ where $S_{\gamma'}$ 
is the boundary of a tubular neighborhood of some  
connected component of the lifting of $S_{\gamma}^{sp}$ to $M_{\phi^{2}}$. 
\end{proof}

One could hope to extend our classification to the case of  
incompressible, but not $\partial$-incompressible surfaces, however some  
additional difficulties are involved (compare Example \ref{Example 2.22})
and we stop on Theorem \ref{Theorem 2.13} and Proposition \ref{Proposition 2.8}.\\
  If we drop the assumption about hyperbolicity of the monodromy map, we  
will deal either with a periodic monodromy so with a Seifert  
fibered space (for this case see \cite{P-2}) or with a reducible  
monodromy map \cite{T}.  The latter case may be studied by using  
Proposition \ref{Proposition 2.8} and the knowledge about incompressible  
surfaces in a 2-punctured disk bundle over $S^{1}$ (compare \cite{CJR} 
and \cite{P-2}).\\
In \cite{P-1} we study with details the case of nonorientable, 
incompressible surfaces of genus 3 embedded in manifolds 
obtained from punctured-torus bundles over $S^{1}$ by capping  off the torus in the boundary.
 \ \\ \ \\

\section{Example of surfaces in $M_{\phi^k}$}\label{Section 3}
We illustrate our classification theorems on the rather general example.
\begin{example}\label{Example 2.22}
Consider manifold $M_{\phi^{k}}$ where\\
$\phi=\left[\begin{array}{cc} 
5 & 2 \\ 
2 & 1 
\end{array}\right] = \left[\begin{array}{cc} 
1 & 1 \\ 
0 & 1 
\end{array}\right]^{2} \centerdot \left[\begin{array}{cc} 
1 & 0 \\ 
1 & 1 
\end{array}\right]^{2} = \bar{\alpha}^{2}\beta^{2}$; see Figure 3.1  
(here $\bar\alpha = 
\left[\begin{array}{cc} 
1 & 1 \\ 
0 & 1 
\end{array}\right]$ and $\beta= \left[\begin{array}{cc} 
1 & 0 \\ 
1 & 1 
\end{array}\right]$ are standard generators of $SL(2,Z)$; compare \cite{F-H}).\\

\centerline{\psfig{figure=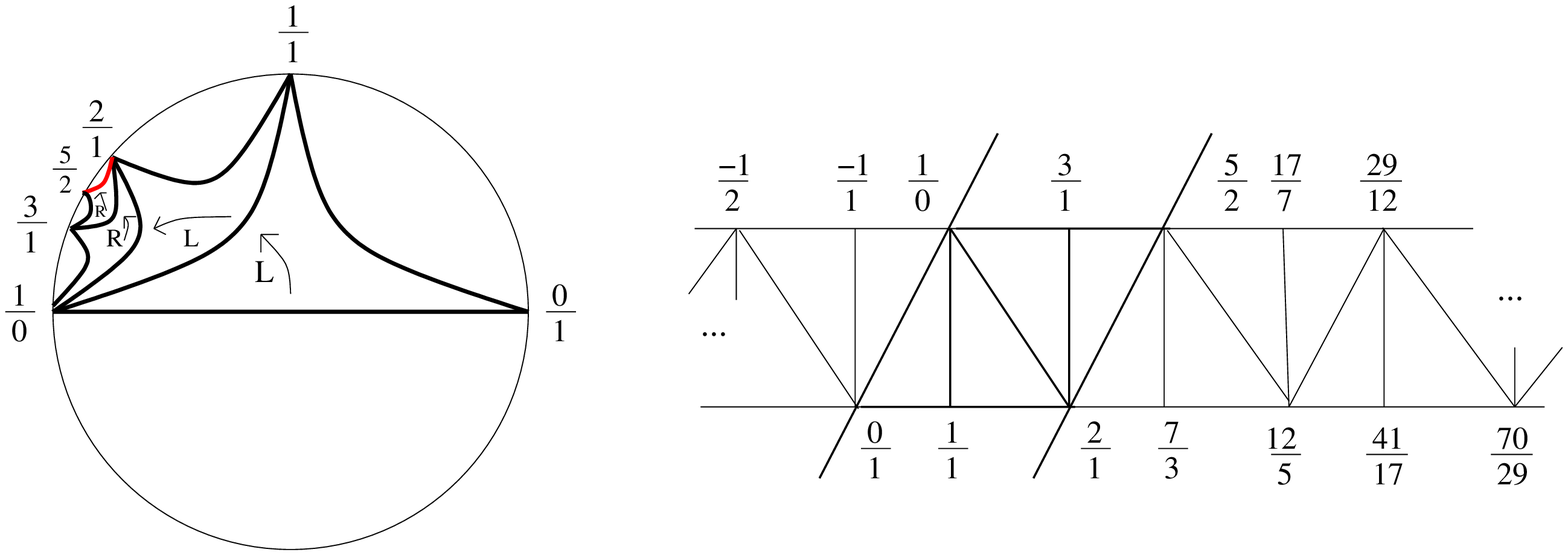,height=6.0cm}}\ \\ \
\centerline{Figure 3.1;  part of $PSL(2\Z)$ diagram}\

Because $\phi^{k}$, modulo $\Z_{2}$, has three eigenvalues we have three 
minimal, invariant edge-paths in $\W$ ($\gamma_{i} \subset \W_{i}$, i=1, 2 or 0) 
(compare \cite{F-H}; the first version).  Therefore we have three (up to 
isotopy) closed, non-parallel to the boundary, incompressible 
surfaces in $M_{\phi^{k}}$ (see Theorem 2.13a) and Proposition \ref{Proposition 2.16}):\\
$S_{\gamma_{1}}^{c}$, $S_{\gamma_{2}}^{c}$, and $S_{\gamma_{0}}^{c}$. 
Now consider each $\gamma_{i}$ independently. 
\begin{enumerate}
\item[(1)] Consider $\gamma_{1} \subset \W_{1}$.  It is determined by the vertices:\\
$...,\frac{a_{0}}{b_{0}}=\frac{0}{1},\frac{a_{2}}{b_{2}}=\frac{2}{1},\frac{a_{4}}{b_{4}}=\frac{12}{5},...,\frac{a_{2k}}{b_{2k}}= 
\left[\begin{array}{cc} 
5 & 2 \\ 
2 & 1 
\end{array}\right]^{k} (\frac{0}{1}),...$\\
we have $\frac{a_{2i+2}}{b_{2i+2}}=\frac{a_{2i-2}+2\kappa a_{2i}}{b_{2i-2}+2\kappa b_{2i}}$ where $\kappa =-3$ so ($2\kappa \neq \pm 2$), so\\
each symbol $[\varepsilon_{1},...,\varepsilon_{k}] \in (\Z_{2})^{k}$ gives us a different surface 
$S_{\gamma}(\varepsilon_{1},..,\varepsilon_{k})$ (see Definition 2.14 and Proposition 2.16).\\
Slopes are: $sl(S_{\gamma_{1}}(\varepsilon_{1},...,\varepsilon_{k}))=\frac{1}{2} \Sigma \varepsilon_{i}$.  Boundary of a tubular neighborhood 
of each $S_{\gamma_{1}}(\varepsilon_{1},...,\varepsilon_{k})$ is incompressible (compare Definition \ref{Definition 2.14}).\\
Now let us assume that $\Sigma \varepsilon_{i}$ is odd, so $b(S_{\gamma_{1}}(\varepsilon_{1},...,\varepsilon_{k}))=1$\\
  Consider the construction from Proposition \ref{Proposition 2.8} (as in 
Observation \ref{Observation 2.19}) with $S_{0}$ a Mobius band with a hole;
$S_{0} \subset [0,1] \times T^{2} ([0,1] \times T^{2}$ will be glued to $M_{\phi^{k}}$ along $\{0\} \times T^{2}$ and  
		$\partial M_{\phi^{k}})$.  $S_{0}$ is determined uniquely by the slopes $\frac{1}{2} \Sigma \varepsilon_{1}$ on  
$\{0\} \times T^{2}$ and $\frac{1}{0}$ on $\{1\} \times T^{2}$.  It leads us to the surface $\bar{S}_{\gamma_{1}}$ (see 
Observation 2.19) independently of a choice of $(\varepsilon_{1},...,\varepsilon_{k})$ 
with $\Sigma \varepsilon_{1}$ odd.  Hence we obtain the following examples:
\begin{itemize}
\item[(i)] if 
	$(\varepsilon_{1},...,\varepsilon_{k}) \neq (\varepsilon'_{1},...,\varepsilon'_{k})$ and $\Sigma \varepsilon_{i} 
		= \Sigma \varepsilon'_{i}= \mbox{ odd number}$\\
then the construction gives us examples of incompressible, 
$\partial$-incompressible surfaces in $M_{\phi^{k}}$ which are not isotopic 
but which after adding the "collar'' ($I \times T^{2}, S_{0}$) become isotopic 
(however still incompressible).
\item[(ii)] if $\Sigma \varepsilon_{i} \neq \Sigma \varepsilon'_{i}$ (both numbers odd) and we add to the surface 
$S_{\gamma_{1}}(\varepsilon_{1},...,\varepsilon_{k})$ the "collar'' ($[0,2] \times T^{2}, S_{1}$) where $S_{1}$ is the unique 
incompressible Klein bottle in $[0,2] \times T^{2}$ with two holes given 
by slopes $\frac{1}{2}(\Sigma \varepsilon_{i})$ in $\{0\} \times T^{2}$ and $\frac{1}{2} \Sigma \varepsilon'_{i}$ in $\{2\} \times T^{2}$ (see 
Theorem \ref{Theorem 2.3}).  In such a way we construct the compressible surface promised in Proposition \ref{Proposition 2.8}.  
To see that the constructed surface (say $S$) is compressible we can use equality:
$$S=S_{\gamma_{1}}(\varepsilon_{1},...,\varepsilon_{k}) \cup S_{1}=S_{\gamma_{1}}(\varepsilon_{1},...,\varepsilon_{k}) \cup S'_{1} \cup S_{1}'',$$ 
		where 
$S'_{1}$ in $[0,1] \times T^{2}$ is given by the slopes $\frac{1}{2} \Sigma \varepsilon_{i}$ in $\{0\} \times T^{2}$ and $\frac{1}{0}$
in $\{1\} \times T^{2}$ and $S_{1}''$ in $[1,2] \times T^{2}$ is determined by the slopes $\frac{1}{0}$ 
in $\{1\} \times T^{2}$ and $\frac{1}{2} \Sigma \varepsilon'_{i}$ in $\{2\} \times T^{2}$.  S is isotopic to  
$S_{\gamma'_{1}}(\varepsilon_{1},...,\varepsilon'_{k}) \cup S_{2} \cup S_{1}''$ where $S_{2}$ in $[0,1] \times T^{2}$ is determined 
by the slopes $\frac{1}{2} \Sigma \varepsilon'_{1}$ in $\{0\} \times T^{2}$ and $\frac{1}{0}$ in $\{1\} \times T^{2}$.  S is compressible 
because $S_{2} \cup S_{1}''$ is compressible (nonorientable surface 
in $[0,2] \times T^{2}$ with the same slopes in $\{0\} \times T^{2}$ and $\{2\} \times T^{2}$; see Theorem \ref{Theorem 2.3}).
\end{itemize}
\item[(b)] $\gamma_{2} \subset \W_{2}$, $\gamma_{2}$ is determined by the vertices\\
$\ldots,\frac{a_{0}}{b_{0}}=\frac{1}{0},\frac{a_{2}}{b_{2}}=\frac{5}{2},\frac{a_{4}}{b_{4}}=
\frac{29}{12},\ldots,\frac{a_{2k}}{b_{2k}}= \phi^{k}(\frac{1}{0}),\ldots$ (see Figure 3.2).\\

\centerline{\psfig{figure=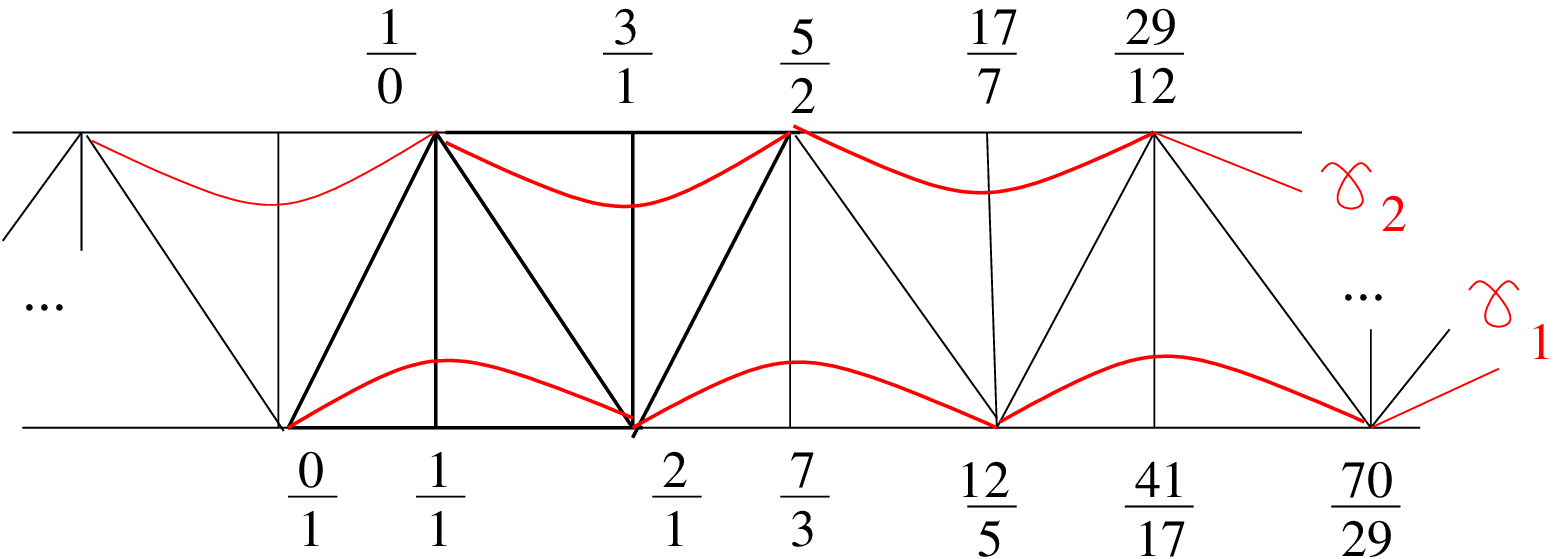,height=4.5cm}}\ \\ \
\centerline{Figure 3.2;  }\

There are only trivial relations among symbols $(\varepsilon_{1},...,\varepsilon_{k})$, 
similar to the case of $\gamma_{1}$, and $sl(S_{\gamma_{2}}(\varepsilon_{1},...,\varepsilon_{k})=-\frac{1}{2} \Sigma \varepsilon_{i}$. 
Incompressible surfaces $S_{\gamma_{1}}(0,...,0)$ and $S_{\gamma_{2}}(0,...,0)$
are disjoint and non-isotopic but the boundaries of their tubular neighborhoods are parallel.


\item[(c)] $\gamma_{0} \subset \W_{0}$; see Figure 3.3.\\

\centerline{\psfig{figure=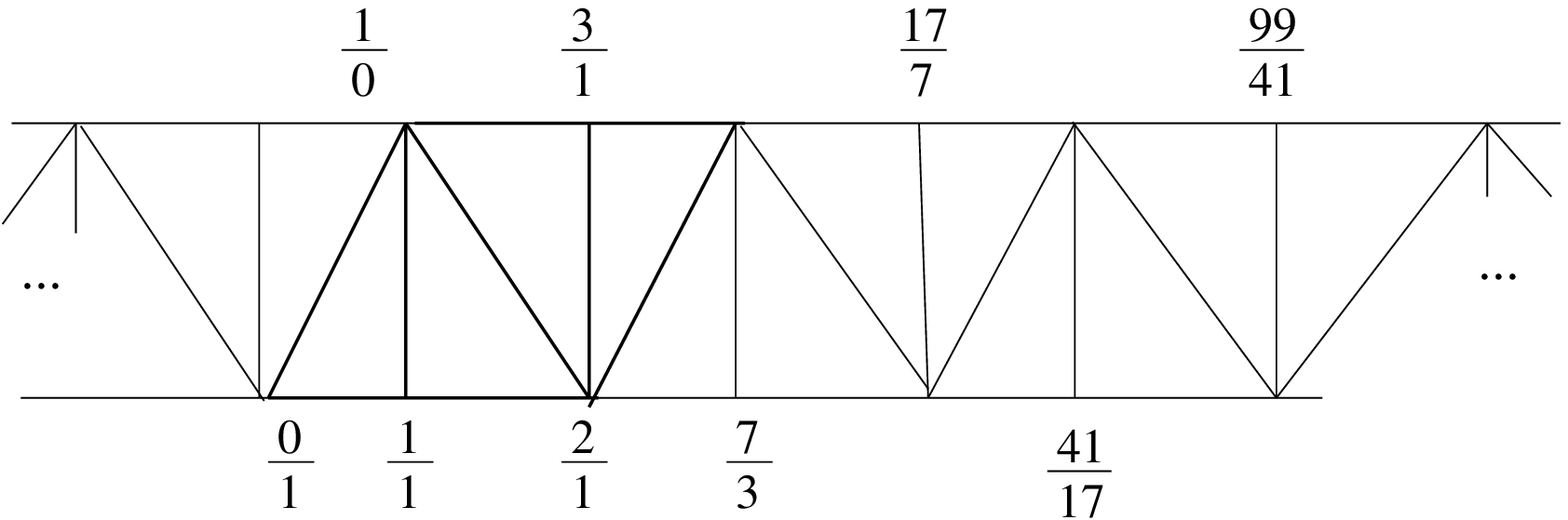,height=4.5cm}}\ \\ \
\centerline{Figure 3.3;  }\

$\gamma_{0}$ is determined by the vertices\\
$...\frac{a_{0}}{b_{0}}=\frac{1}{1},\frac{a_{2}}{b_{2}}=\frac{3}{1},\frac{a_{4}}{b_{4}}=\frac{7}{3}, \frac{a_{6}}{b_{6}}=\frac{17}{7},...$ 
and $\frac{a_{2i+2}}{b_{2i+2}}=\frac{a_{2i-2}+2a_{2i}}{b_{2i-2}+2b_{2i}}$ so 
Definition 2.14 gives us relations among symbols $(\varepsilon_{1},\varepsilon_{2},...,\varepsilon_{2k})$ 
which identify symbol $(...0^{i-th},0...)$ with $(...1^{i-th},1...)$. 
It gives us $2k+1$ non-isotopic surfaces $S_{\gamma_{0}}(\varepsilon_{1},\varepsilon_{2},...,\varepsilon_{2k})$.\\
$sl(S_{\gamma_{0}}(\varepsilon_{1},\varepsilon_{2},...,\varepsilon_{2k}))=\frac{1}{2} \Sigma (-1)^{i+1} \varepsilon_{i}$.  
$S_{\gamma_{0}}(\varepsilon_{1},\varepsilon_{2},...,\varepsilon_{2k})$\\
is not $\pi_{1}$-injective (the boundary of a tubular neighborhood of 
$S_{\gamma_{0}}(\varepsilon_{1},\varepsilon_{2},...,\varepsilon_{2k})$ is compressible; see Figure 3.3).
\item[(d)] Each minimal, $\phi^{k}$-invariant edge-path in the diagram of 
$PSL(2,\Z)$ is of even period, so it leads to an orientable manifold (see Proposition \ref{Proposition 2.17}).
\end{enumerate}
\end{example}

\section{Incompressible surfaces and skein modules}\label{Section 4}
Last time, before Maite-fest, I visited Zaragoza in February 1986; I was already then thinking about the Jones polynomial 
and its generalizations (e.g. Conway algebras). Soon after, in April 1987, I discovered skein modules of 3-manifolds \cite{P-s1}.
Immediately I thought that incompressible surfaces have an important role in creating torsion in skein modules \cite{H-P,P-s2,P-s3}.
In particular I asked:
\begin{conjecture}\label{IX.4.6}\ \\ If $M$ is a submanifold of a
rational homology sphere and it does not contain a closed, oriented
incompressible surface then its Homflypt skein module ${\cal S}_3(M)$
is free and isomorphic to the symmetric tensor algebra over module
spanned by conjugacy classes of nontrivial elements of the fundamental
group, ${\cal S}_3(M) = {\bf S}R\hat\pi^o$.
\end{conjecture}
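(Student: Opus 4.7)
The plan is to split the conjecture into a spanning statement and a freeness statement, and to use the hypothesis on incompressible surfaces to reduce to a tractable geometric situation. First I would establish the spanning part: any oriented link $L \subset M$ can be written, modulo the Homflypt skein relation, as a $R$-linear combination of disjoint unions of simple closed curves, each representing a nontrivial conjugacy class in $\pi_1(M)$. The standard move is to pick a generic projection onto a spine of $M$ and do an inductive ``combing'' argument: crossing changes are free (up to multiplication by the Homflypt coefficient), so one can freely homotope each component to any chosen representative of its conjugacy class; the only obstruction to realizing a link as a disjoint union of chosen representatives is the linking/intersection data between components. Here is where no-incompressible-surface enters: without a closed oriented incompressible surface, any two disjoint curves either bound an annulus (and can be pulled apart) or are subject to a Dehn-type disk argument that produces a compressing or sphere-bounding disk, reducing the complexity and letting crossings between distinct conjugacy classes be eliminated by the skein relation after sliding.

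Second, I would attack freeness, i.e.\ that the class of a disjoint union of curves $\prod_i \alpha_i^{n_i}$ is nonzero and that distinct such products are linearly independent. The natural strategy is to construct an evaluation map $\mathcal{S}_3(M) \to \mathbf{S}R\hat\pi^o$ splitting the candidate isomorphism. One way is to lift the Homflypt construction along a universal abelian-type cover: the rational homology sphere hypothesis ensures $H_1(M;\Q)=0$ so there is a clean handle on linking numbers and framings, and every component of a link has a well-defined self-linking number in $\Q$. Then, mimicking the Turaev--Przytycki trace on the algebra of conjugacy classes, define an evaluation by assigning to each component a formal variable indexed by its conjugacy class and check that the Homflypt relations impose exactly the symmetric-algebra identifications.

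The hard part will be the freeness step: in the absence of a global surgery presentation or a distinguished Heegaard structure, constructing a well-defined evaluation map is precisely what obstructs a direct proof, and is why the conjecture remains open in full generality. I would therefore tackle it in stages: prove it first for $M$ an open handlebody (where the result is essentially known), then for manifolds obtained by attaching $2$-handles along curves whose trace gives no new incompressible surface, using a Mayer--Vietoris-type long exact sequence for skein modules along a compressing disk. The inductive step would require showing that attaching a $2$-handle along a curve $\gamma$ killing a conjugacy class $[\gamma]$ kills exactly the ideal generated by $[\gamma]-1$ in the symmetric algebra, which is where the incompressibility hypothesis should again prevent unexpected torsion from appearing.

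Finally, as a sanity check, I would verify the proposal on all the examples covered by Theorem~\ref{Theorem 2.13}: punctured-torus bundles $M_\phi$ with hyperbolic monodromy do contain the surfaces $S_\gamma^c$, so the conjecture predicts torsion, while Seifert-fibered complements of certain knots in $S^3$ with no closed incompressible surface should give a clean symmetric-algebra skein module. Matching these predictions with known computations would both test and sharpen the inductive construction outlined above.
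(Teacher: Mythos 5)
First, a point of calibration: the statement you set out to prove is stated in the paper as a \emph{conjecture}, explicitly left open (``I will leave it to readers to think of this\dots''), so there is no proof in the paper to compare against; your text also does not close the gap, and in fact concedes as much (``is why the conjecture remains open in full generality''). Beyond that, the concrete steps you propose do not hold up. In the spanning step, the claim that ``crossing changes are free (up to multiplication by the Homflypt coefficient)'' is false: the relation $v^{-1}L_+ - vL_- = zL_0$ trades a crossing change for an extra smoothing term, whose components may lie in different conjugacy classes, so your ``combing'' induction has no well-founded complexity without substantial extra work. Likewise, the absence of a closed \emph{oriented} incompressible surface gives no license to pull disjoint curves apart or to promote free homotopy to isotopy: two curves in the same conjugacy class need not cobound an embedded annulus, and no ``Dehn-type disk argument'' is available in a general irreducible $M$ with boundary. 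The freeness half is where the real content of the conjecture lives, and your proposal leaves it untouched: constructing the evaluation map $\mathcal{S}_3(M)\to \mathbf{S}R\hat\pi^o$ is precisely the open problem, and the rational-homology-sphere hypothesis does not by itself produce it. The suggested induction over $2$-handle attachments is also not correct as stated: attaching a $2$-handle along $\gamma$ imposes relations coming from \emph{all} isotopies of links across the handle (sliding relations), not merely the ideal generated by $[\gamma]-1$ in the symmetric algebra, and it is exactly at this step that torsion is expected to be created or detected.

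Your ``sanity check'' also misreads the conjecture. It is a one-way implication (no closed oriented incompressible surface $\Rightarrow$ free skein module), so the presence of incompressible surfaces in $M_\phi$ ``predicts'' nothing; moreover the surfaces $S_\gamma^c$ of Theorem~\ref{Theorem 2.13} are nonorientable, while the hypothesis concerns oriented surfaces, and in any case $M_\phi$ contains a closed oriented incompressible torus parallel to the boundary, so these bundles simply fall outside the conjecture's hypotheses rather than serving as test cases. If you want to pursue this seriously, the productive direction is the one hinted at in your middle paragraph: prove the statement for specific families (handlebodies, and manifolds where a complete classification of incompressible surfaces, such as the one in this paper, is available), and analyze how sliding relations over $2$-handles interact with the symmetric-algebra structure; but none of that is yet a proof.
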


I will leave it to readers to think of this and other possible relations of incompressible surfaces and torsion of skein modules. 

\section{Acknowledgements}\label{Section 5}
I would like to express my gratitude to Prof. Joan Birman 
whose assistance enabled me to  come to study at Columbia and to complete my PhD thesis of which this paper is a part.\\
I would like to thank Maite Lozano and Jos\'e Montesinos for giving me the opportunity to talk about my thesis when I visited Zaragoza 
in 1982\footnote{I defended my thesis August 31, 1981 and in September left for Poland. I submitted the paper 
before leaving. The martial law called officially ``State of War" came to Poland in  December 1981. When I got, with some delay, positive referee report, 
but being asked, justly so, for improvements in presentation, I was already distracted from thinking of my thesis. 
Jos\'e Montesinos kindly invited me to visit him in Zaragoza and in Fall 1982 I got my passport. In October 1982 I gave series of 
talks and that is how I met first time Maite. With the help of an inquisitive audience I noticed that I ``lost" a family of incompressible surfaces 
in my dissertation. Already in Spain I repaired my work but with publication it waited till now. I think it is the proper paper to 
celebrate Maite's seventieth birthday. Specially so because after my visit to Zaragoza we started collaboration and we 
analyzed incompressible surfaces in the complement of a closed 3-braid. The first part of this research was published \cite{LP-1},
while the second still awaits to be typed \cite{LP-2}.}.\\
I was partially supported by the Simons Collaboration Grant-316446 and CCAS Dean's Research Chair award.

\ \\
Department of Mathematics,\\
The George Washington University,\\
Washington, DC 20052\\
e-mail: {\tt przytyck@gwu.edu},\\
and University of Gda\'nsk, Poland
\end{document}